\numberwithin{equation}{section}
\definecolor{darkgreen}{rgb}{0.01,0.75,0.24}
\newcommand{\EE}{\mathbb{E}}
\newcommand{\VV}{\mathbb{V}}
\newcommand{\R}{\mathbb{R}}
\newcommand{\N}{\mathbb{N}}
 \newtheorem{theorem}{Theorem}[section]
  \newtheorem{proposition}[theorem]{Proposition}
  \newtheorem{corollary}[theorem]{Corollary}
  \newtheorem{lemma}[theorem]{Lemma}
\newtheorem{remark}[theorem]{Remark}
\newtheorem{definition}[theorem]{Definition}
\newtheorem{assumption}[theorem]{Assumption}
\DeclareMathOperator*{\argmin}{arg\,min}
\newcommand{\dhh}{d_{\mbox {\tiny{\rm Hell}}}}
\newcommand{\dtv}{d_{\mbox {\tiny{\rm TV}}}}
\begin{document}
\title{Posterior Consistency for Gaussian Process Approximations of Bayesian Posterior Distributions}
\author{Andrew M. Stuart$^1$, Aretha L. Teckentrup$^1$}
\date{}
\maketitle

\noindent
$^1$ Mathematics Institute, Zeeman Building, University of Warwick, Coventry, CV4 7AL, England. \texttt{a.m.stuart@warwick.ac.uk, a.teckentrup@warwick.ac.uk}

\begin{abstract}
We {study} the use of Gaussian process emulators to approximate the parameter-to-observation map or the negative log-likelihood in Bayesian inverse problems. We prove error bounds on the Hellinger distance between the true posterior distribution and various approximations based on the Gaussian process emulator. Our analysis includes approximations based on the mean of the predictive process, as well as approximations based on the full Gaussian process emulator. Our results show that the Hellinger distance between the true posterior and its approximations can be bounded by moments of the error in the emulator. Numerical results confirm our theoretical findings.
\end{abstract}

{\em Keywords}: inverse problem, Bayesian approach, surrogate model, Gaussian process regression, posterior consistency \vspace{2ex}

{\em AMS 2010 subject classifications}: 60G15, 62G08, 65D05, 65D30, 65J22

\section{Introduction}
{Given a} {mathematical model} of a physical process, we are interested in the inverse problem of determining the inputs to the model given some noisy observations related to the model outputs. Adopting a Bayesian {approach 
\cite{kaipio2005statistical,stuart10}}, we incorporate our prior knowledge of the inputs into a probability distribution, referred to as the {\em prior distribution}, and obtain a more accurate representation of the model inputs in the {\em posterior distribution}, which results from conditioning the prior distribution on the observations. 
Since the posterior distribution is generally intractable, sampling methods such as Markov chain Monte Carlo (MCMC) \cite{hastings70,mrrtt53,robert_casella,cmps14,gc11,crsw13} are typically used {to explore it}. A major challenge in the application of MCMC methods to problems of practical interest is the large computational cost associated with numerically solving the mathematical model for a given set of the input parameters. Since the generation of each sample by the MCMC method requires a solve of the governing equations, and often millions of samples are required, this process can quickly become very costly. 

{This drawback of fully Bayesian inference for complex models was
recognised several decades ago in the statistics literature, and
resulted in key papers which had a profound influence on methodology 
\cite{sacks1989design,kennedy2001bayesian,o2006bayesian}. 
These papers advocated
the use of a Gaussian process surrogate model to approximate the solution of the 
governing equations, and in particular the likelihood,
at a much lower computational cost. This approximation then results in an approximate posterior distribution, which can be sampled more cheaply using MCMC. 
However, despite the widespread adoption of the methodology, there has been
little analysis of the effect of the approximation on posterior
inference. In this work, we study this issue, focussing on the use of Gaussian process emulators \cite{rasmussen_williams,stein,sacks1989design,kennedy2001bayesian,o2006bayesian,brsrwm08,hkccr04} as surrogate models. 
Other choices of surrogate models such as those described in \cite{bwg08,akksstv06}, generalised Polynomial Chaos \cite{xk03,mnr07}, sparse grid collocation \cite{bnt10,mx09} and
adaptive subspace methods \cite{constantine2014active,constantine2015active} 
might also be studied similarly, but are not considered here. Indeed 
we note that the paper \cite{mx09} studied the effect, on the
posterior distribution, of stochastic collocation approximation within
the forward model and was one of the first papers to address such questions.
That paper used the Kullback-Leibler divergence, or relative entropy,
to measure the effect on the posterior, and considered finite dimensional
input parameter spaces. 
}

{ The main focus of this work is to analyse the error introduced in the posterior distribution by using a Gaussian process emulator as a surrogate model. The error is measured in the Hellinger distance, which {is shown in \cite{stuart10,ds15} to be a suitable metric for evaluation of perturbations to the posterior measure in Bayesian inverse problems, including problems with infinite dimensional input parameter spaces. We consider emulating either the parameter-to-observation map or the negative log-likelihood.} The convergence results presented in this paper are of two types. In section \ref{sec:gp}, we present convergence results for simple Gaussian process emulators applied to a general function $f$ satisfying suitable regularity assumptions. In section \ref{sec:gp_app}, we prove bounds on the error in the posterior distribution in terms of the error in the Gaussian process emulator. The novel contributions of this work are mainly in section \ref{sec:gp_app}. The results in the two sections can be combined to give a final error estimate for the simple Gaussian process emulators presented in section \ref{sec:gp}. However, the error bounds derived in section \ref{sec:gp_app} are much more general in the sense that they apply to any Gaussian process emulator satisfying the required assumptions. A short discussion on extensions of this work related to Gaussian process emulators used in practice is included in the conclusions in section \ref{sec:conc}.}

{ We study three different approximations to the posterior distribution. Firstly, we consider using the mean of the Gaussian process emulator as a surrogate model, resulting in a deterministic approximation to the posterior distribution. Our second approximation is obtained by using the full Gaussian process as a surrogate model, leading to a random approximation in which case we study the second moment of the Hellinger distance between the true and the approximate posterior distribution. The uncertainty in the posterior distribution introduced in this way can be thought of representing the uncertainty in the emulator due to the finite number of function evaluations used to construct it. This uncertainty can in applications be large (or comparable) to the uncertainty present in the observations, and a user may want to take this into account to "inflate" the variance of the posterior distribution. Finally, we construct an alternative deterministic approximation by using the full Gaussian process as surrogate model, and taking the expected value (with respect to the distribution of the surrogate) of the likelihood. It can be shown that this approximation of the likelihood is optimal in the sense that it minimises the $L^2$-error \cite{sn16}. In contrast to the approximation based on only the mean of the emulator, this approximation also takes into account the uncertainty of the emulator, although only in an averaged sense. 
}

For the three approximations discussed above, we show that the Hellinger distance between the true and approximate posterior distribution can be bounded by the error between the true parameter-to-observation map (or log-likelihood) and its Gaussian process approximation, measured in a norm that depends on the approximation considered. {Our analysis is restricted to finite dimensional input spaces. This
reflects the state-of-the-art with respect to Gaussian process
emulation itself; the analysis of the effect on the posterior is 
less sensitive to dimension.} {For simplicity, we also restrict our attention to bounded parameters, i.e. parameters in a compact subset of $\R^K$ for some $K \in \N$, and to problems where the parameter-to-observation map is uniformly bounded.}

{ The convergence results on Gaussian process regression presented in section \ref{sec:gp} are mainly known results from the theory of scattered data interpolation \cite{wendland,sss13,nww06}. The error bounds are given in terms of the fill distance of the design points used to construct the Gaussian process emulator, and depend in several ways on the number $K$ of input parameters we want to infer. Firstly, when looking at the error in terms of the number of design points used, rather than the fill distance of these points, the rate of convergence typically deteriorates with the number of parameters $K$. Secondly, the proof of these error estimates requires assumptions on the smoothness of the function being emulated, where the precise smoothness requirements depend on the Gaussian process emulator employed. For emulators based on Mat\`ern kernels \cite{matern}, we require these maps to be in a Sobolev space $H^s$, where $s > K/2$. We would like to point out here that it is not necessary for the function being emulated to be in the {\em reproducing kernel Hilbert space} (or {\em native space}) of the Mat\`ern kernel used in order to prove convergence (cf Proposition \ref{prop:mean_conv_int}), but that is suffices to be in a larger Sobolev space in which point evaluations are bounded linear functionals. 
}

The remainder of this paper is organised as follows. In section \ref{sec:inv}, we set up the Bayesian inverse problem of interest. We then recall some results on Gaussian process regression in section \ref{sec:gp}. The heart of the paper is section \ref{sec:gp_app}, where we introduce the different approximations to the posterior and perform an error analysis. Our theoretical results are confirmed on a simple model problem in section \ref{sec:num}, and some conclusions are finally given in section \ref{sec:conc}.

\section{Bayesian Inverse Problems}\label{sec:inv}
{Let $X \subseteq \R^K$, for some $K \in \N$, represent the range of a finite number $K$ of parameters $u$, and define the measurable mappings $G: \R^K \rightarrow V$ and $\mathcal O : V \rightarrow \mathbb R^J$, for some separable Banach space $V$ and $J \in \mathbb N$. Denote by $\mathcal G: \R^K \rightarrow \mathbb R^J$ the composition of $\mathcal O$ and $G$. We refer to $G$ as the {\em forward map}, to $\mathcal O$ as the {\em observation operator} and to $\mathcal G$ as the {\em parameter-to-observation map}. We denote by $\|\cdot\|$ the Euclidean norm on $\mathbb R^n$, for $n \in \N$.  } 
The inverse problem of interest is to determine the parameters $u \in X$ from the noisy data $y \in \R^J$ given by
\begin{equation*}
y = \mathcal G(u) + \eta,
\end{equation*}
where the noise $\eta$ is a realisation of the $\mathbb R^J$-valued Gaussian random variable $\mathcal N(0,\sigma_\eta^2 I)$, for some known variance $\sigma_\eta^2$. 

We adopt a Bayesian perspective in which, in the absence of data, $u$ is distributed according to a {prior measure $\mu_0$ on $\R^K$, with $\mu_0(X) = 1$. We make the following further assumption on $\mu_0$.} \vspace{1ex}

{ \noindent {\bf Assumption A.} We have $\|f\|_{L^2_{\mu_0}(X)} \leq C_{\mathrm A}\|f\|_{L^2(X)}$, for all $f \in L^2(X)$. \vspace{1ex} }

We are interested in the posterior distribution $\mu^y$ on the conditioned random variable $u | y$, which can be characterised as follows.

\begin{proposition} (\cite{stuart10}) Suppose $\mathcal G : { \R^K} \rightarrow \R^J$ is continuous. Then the posterior distribution $\mu^y$ on the conditioned random variable $u | y$ is absolutely continuous with respect to $\mu_0$ and given by Bayes' Theorem: 
\begin{equation*}\label{eq:rad_nik}
\frac{d\mu^y}{d\mu_0}(u) = \frac{1}{Z} \exp\big(-\Phi(u)\big),
\end{equation*}
where
\begin{equation}\label{eq:def_like}
\Phi(u) = \frac{1}{2 \sigma_\eta^2} \left\| y -  \mathcal G (u) \right\|^2  \quad \text{and } \qquad Z = \EE_{\mu_0}\Big(\exp\big(-\Phi(u)\big)\Big).
\end{equation}
\end{proposition}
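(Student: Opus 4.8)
\emph{Proof proposal.} The plan is to realise $\mu^y$ as a regular conditional distribution and then read off its density from an abstract form of Bayes' theorem, rather than manipulating Lebesgue densities directly; the latter is unavailable since $\mu_0$ is an arbitrary probability measure on $X$. First I would introduce the joint law $\nu$ of the pair $(u,y)$ on $X \times \R^J$: by hypothesis $u \sim \mu_0$, and conditional on $u$ the datum $y = \mathcal G(u) + \eta$ is Gaussian, $y \mid u \sim \mathcal N\big(\mathcal G(u), \sigma_\eta^2 I\big)$. Thus $\nu(du,dy) = \rho(y\mid u)\,dy\,\mu_0(du)$ with $\rho(y \mid u) = (2\pi\sigma_\eta^2)^{-J/2}\exp\big(-\tfrac{1}{2\sigma_\eta^2}\|y - \mathcal G(u)\|^2\big)$. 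Alongside $\nu$ I would use the reference product measure $\nu_0(du,dy) = \mu_0(du)\,\mathbb{Q}_0(dy)$, where $\mathbb{Q}_0 = \mathcal N(0,\sigma_\eta^2 I)$ has Lebesgue density $\rho_0$; under $\nu_0$ the variables $u$ and $y$ are independent with $u$-marginal $\mu_0$.

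The second step is to compute the Radon--Nikodym derivative. Because both conditional laws are Gaussian with the common covariance $\sigma_\eta^2 I$, the normalising constants cancel, and a short calculation using $\|y - \mathcal G(u)\|^2 - \|y\|^2 = \|\mathcal G(u)\|^2 - 2\langle y, \mathcal G(u)\rangle$ gives the factorisation
\[
\frac{d\nu}{d\nu_0}(u,y) = \frac{\rho(y\mid u)}{\rho_0(y)} = \exp\big(-\Phi(u)\big)\,\exp\Big(\tfrac{1}{2\sigma_\eta^2}\|y\|^2\Big),
\]
in which the second factor is independent of $u$ and therefore cancels upon normalisation.

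Third, I would invoke the standard conditioning lemma (Bayes' theorem for measures, as in \cite{stuart10}): if $\nu \ll \nu_0$ with density $g(u,y)$, and under $\nu_0$ the pair is independent with $u \sim \mu_0$, then the conditional law $\mu^y$ of $u$ given $y$ is absolutely continuous with respect to $\mu_0$, with $\tfrac{d\mu^y}{d\mu_0}(u) = g(u,y)\big/\int_X g(u,y)\,\mu_0(du)$, provided the denominator lies in $(0,\infty)$. Substituting the factorised $g$ above, the $u$-independent factor cancels between numerator and denominator and yields exactly $\tfrac{d\mu^y}{d\mu_0}(u) = Z^{-1}\exp(-\Phi(u))$ with $Z = \EE_{\mu_0}\big(\exp(-\Phi(u))\big)$ as in \eqref{eq:def_like}.

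It remains to verify $Z \in (0,\infty)$, which is where the hypotheses enter. Since $\Phi \ge 0$ we have $\exp(-\Phi) \le 1$, so $Z \le \mu_0(X) = 1 < \infty$. For positivity, continuity of $\mathcal G$ makes $\Phi$ continuous, hence bounded, on the compact set $X$, so $\exp(-\Phi(u)) \ge \exp(-\sup_{X}\Phi) > 0$ and thus $Z > 0$ for every $y$. I expect the main obstacle to be the rigorous justification of the conditioning step — the existence of a regular conditional distribution and the validity of the quotient formula — rather than the Gaussian algebra or the compactness bound on $Z$; in finite observation dimension this can be handled cleanly by disintegration, but it is the one place where genuine measure-theoretic care is required.
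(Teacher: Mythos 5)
Your proof is correct, and it takes essentially the same route as the source the paper cites for this result: the paper itself states the proposition without proof, referring to \cite{stuart10}, where the argument is exactly yours --- form the joint law of $(u,y)$, compute its Radon--Nikodym derivative with respect to the product reference measure $\mu_0 \otimes \mathcal N(0,\sigma_\eta^2 I)$ so that the Gaussian normalising constants cancel, and apply the abstract conditioning (Bayes) lemma, with $0 < Z \leq 1$ following from $0 \leq \Phi \leq \sup_{u \in X}\Phi(u) < \infty$ by continuity of $\mathcal G$ on the compact set $X$. The measure-theoretic point you flag at the end (existence of a regular conditional distribution and validity of the quotient formula) is precisely what the conditioning lemma in that reference supplies, since $X \times \R^J$ is Polish, so there is no gap.
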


We make the following assumption on the regularity of the parameter-to-observation map $\mathcal G$.

\begin{assumption}\label{ass:reg} {We assume that}
$\mathcal G : X \rightarrow \R^J$ satisfies $\mathcal G \in H^s(X; \R^J)$, for some $s > K/2$, {and that} $\sup_{u \in X} \|\mathcal G(u)\| =: C_\mathcal G < \infty$.
\end{assumption}

Under Assumption \ref{ass:reg}, it follows that the negative log-likelihood $\Phi : X \rightarrow \R$ satisfies $\Phi \in H^s(X)$, and  $\sup_{u \in X} |\Phi(u)| =: C_\Phi < \infty$. Since $s > K/2$, the Sobolev Embedding Theorem furthermore implies that $\mathcal G$ and $\Phi$ are continuous.
Examples of model problems satisfying Assumption \ref{ass:reg} include linear elliptic and parabolic partial differential equations \cite{cohen2011analytic,ss14} and non-linear ordinary differential equations \cite{walter,hs13}. A specific example is given in section \ref{sec:num}. 

{ Note that in Assumption \ref{ass:reg}, the smoothness requirement on $\mathcal G$ becomes stronger as $K$ increases. The reason for this is that in order to apply the results in section \ref{sec:gp}, we require $\mathcal G$ to be in a Sobolev space in which point evaluations are bounded linear functionals. The second part of Assumption \ref{ass:reg} is mainly included to define the constant $C_{\mathcal G}$, since the fact that $\sup_{u \in X} \|\mathcal G(u)\|$ is finite follows from the continuity of $\mathcal G$ and the compactness of $X$.}

\section{Gaussian {Process Regression}}\label{sec:gp}
We are interested in using Gaussian process regression to build a surrogate model for the forward map, leading to an approximate Bayesian posterior distribution that is computationally cheaper to evaluate. Generally speaking, Gaussian process regression (or Gaussian process emulation, or kriging) is a way of building an approximation to a function $f$, based on a finite number of evaluations of $f$ at a chosen set of {design points}. We will here consider emulation of either the parameter-to-observation map $\mathcal G: X \rightarrow \R^J$ or the negative log-likelihood $\Phi:X \rightarrow \R$. Since the efficient emulation of vector-valued functions is still an open question \cite{bzkl13}, we will focus on the emulation of scalar valued functions. An emulator of $\mathcal G$ in the case $J > 1$ is constructed by emulating each entry independently.

Let now $f : X \rightarrow \R$ be an arbitrary function. Gaussian process emulation is in fact a Bayesian procedure, and the starting point is to put a Gaussian process prior on the function $f$. In other words, we model $f$ as
\begin{equation}\label{eq:gp}
{f_0} \sim \text{GP}(m(u), k(u,u')),
\end{equation}
with known mean $m : X \rightarrow \R$ and two point covariance function $k : X \times X \rightarrow \R$, {assumed to be positive-definite.} Here, we use the Gaussian process notation as in, for example, \cite{rasmussen_williams}. In the notation of \cite{stuart10}, we have ${f_0} \sim \mathcal N(m,C)$, where $m=m(\cdot)$ and $C$ is the integral operator with covariance function $k$ as kernel.

Typical choices of the mean function $m$ include the zero function and polynomials \cite{rasmussen_williams}. 
A family of covariance functions $k$ frequently used in applications are the Mat\`ern covariance functions \cite{matern}, given by
\begin{equation}\label{eq:mat_cov}
k_{\nu,\lambda,\sigma_k^2}(u,u') = \sigma_k^2 \, \frac{1}{\Gamma(\nu) 2^{\nu-1}} \left(\sqrt{2\nu} \frac{\|u-u'\|}{\lambda}\right)^\nu B_\nu\left(\sqrt{2\nu} \frac{\|u-u'\|}{\lambda}\right) ,
\end{equation}
where $\Gamma$ denotes the Gamma function, $B_\nu$ denotes the modified Bessel function of the second kind and $\nu, \lambda$ and $\sigma_k^2$ are positive parameters. The parameter $\lambda$ is referred to as the {\em correlation length}, and governs the length scale at which ${f_0}(u)$ and ${f_0}(u')$ are correlated. The parameter $\sigma_k^2$ is referred to as the {\em variance}, and governs the magnitude of ${f_0}(u)$. Finally, the parameter $\nu$ is referred to as the {\em smoothness parameter}, and governs the regularity of ${f_0}$ as a function of $u$.
As the limit when $\nu \rightarrow \infty$, we obtain the Gaussian covariance
\begin{equation}\label{eq:gauss_cov}
k_{\infty,\lambda,\sigma_k^2}(u,u') = \sigma_k^2 \exp \left(-\frac{\|u-u'\|^2}{2 \lambda^2}\right).
\end{equation} 

Now suppose we are given data in the form of a set of distinct {\em design points} $U := \{u^n\}_{n=1}^N \subseteq X$, together with corresponding function values 
\begin{equation}\label{eq:data_exact}
f(U) := [f(u^1), \dots, f(u^N)] \in \R^N.
\end{equation}
{Since $f_0$ is a Gaussian process, the vector $[f_0(u^1), \dots, f_0(u^N), f_0(\tilde u^1), \dots, f_0(\tilde u^M)] \in R^{N+M}$, for any set of test points $\{ \tilde u^m\}_{m=1}^M \subseteq X \setminus U$, follows a multivariate Gaussian distribution. The conditional distribution of $f_0(\tilde u^1), \dots, f_0(\tilde u^M)$, given the values $f_0(u^1) = f(u^1), \dots, f_0(u^N) = f(u^N)$, is then again Gaussian, with mean and covariance given by the standard formulas for the conditioning of Gaussian random variables \cite{rasmussen_williams}.} 

Conditioning the Gaussian process \eqref{eq:gp} on the known values $f(U)$, we hence obtain another Gaussian process $f_N$, known as the {\em predictive process}. We have
\begin{equation}\label{eq:gp_pred}
f_N \sim \text{GP}(m^f_N(u), k_N(u,u')),
\end{equation}
where the predictive mean $m^f_N : X \rightarrow \R$ and predictive covariance $k_N : X \times X \rightarrow \R$ are known explicitly, and depend on the modelling choices made in \eqref{eq:gp}. In the following discussion, we will focus on the popular choice $m \equiv 0$; the case of a non-zero mean is discussed in Remark \ref{rem:mean}. 
When $m \equiv 0$, we have
\begin{align}\label{eq:pred_eq}
m_N^f(u) = k(u,U)^T K(U,U)^{-1} f(U), \qquad
k_N(u,u') = k(u,u') - k(u,U)^T K(U,U)^{-1} k(u',U),
\end{align}
where $k(u,U) = [k(u,u^1), \dots, k(u,u^N)] \in \R^{N}$ and $K(U,U) \in \R^{N \times N}$ is the matrix with $ij^\mathrm{th}$ entry equal to $k(u^i,u^j)$ \cite{rasmussen_williams}. 

There are several points to note about the predictive mean $m_N^f$ in \eqref{eq:pred_eq}. Firstly, $m_N^f$ is a linear combination of the function evaluations $f(U)$, and hence a linear predictor. It is in fact the {\em best linear predictor} \cite{stein}, in the sense that it is the linear predictor with the smallest mean square error. Secondly, $m_N^f$ interpolates the function $f$ at the design points $U$, since the vector $k(u^n,U)$ is the $n^\mathrm{th}$ row of the matrix $K(U,U)$. In other words, we have $m_N^f(u^n) = f(u^n)$, for all $n=1,\dots,N$. 
Finally, we remark that $m_N^f$ is a linear combination of kernel evaluations,
\begin{equation*}
m_N^f(u) = \sum_{n=1}^N \alpha_n k(u,u^n),
\end{equation*}
where the vector of coefficients is given by $\alpha = K(U,U)^{-1} f(U)$. 
Concerning the predictive covariance $k_N$, we note that $k_N(u,u) < k(u,u)$ for all $u \in X$, since $K(U,U)^{-1}$ is positive definite. Furthermore, we also note that $k_N(u^n,u^n) = 0$, for $n=1, \dots, N$, since $k(u^n,U)^T \; K(U,U)^{-1} \; k(u^n,U) = k(u^n,u^n)$.

For stationary covariance functions $k(u,u') = k(\|u-u'\|)$, the predictive mean is a radial basis functions interpolant of $f$, and we can make use of results from the radial basis function literature to investigate the behaviour of $m_N^f$ and $k_N$ as $N \rightarrow \infty$. Before we do this, {in
subsection \ref{ssec:gp_sk}, we recall some results on native spaces 
(also know as reproducing kernel Hilbert spaces) in subsection \ref{ssec:gp_native}.}

\subsection{Native spaces of Mat\`ern kernels}\label{ssec:gp_native}
We recall the notion of the reproducing kernel Hilbert space corresponding to the kernel $k$, usually referred to as the native space of $k$ in the radial basis function literature.

\begin{definition}\label{def:rkhs} A Hilbert space $H_k$ of functions $f: X \rightarrow \R$, with inner product $\langle \cdot, \cdot \rangle_{H_k}$, is called the {\em reproducing kernel Hilbert space (RKHS)} corresponding to a symmetric, positive definite kernel $k : X \times X \rightarrow \R$ if
\begin{itemize}
\item[i)] for all $u \in X$, $k(u, u')$, as a function of its second argument, belongs to $H_k$,
\item[ii)] for all $u \in X$ and $f \in H_k$, $\langle f, k(u, \cdot) \rangle_{H_k} = f(u)$.
\end{itemize}
\end{definition}

By the Moore-Aronszajn Theorem \cite{aronszajn50}, a unique RKHS exists for each symmetric, positive definite kernel $k$. Furthermore, this space can be constructed using Mercer's Theorem \cite{mercer09}, and it is equal to the Cameron-Martin space \cite{bogachev} of the covariance operator $C$ with kernel $k$.
For covariance kernels of Mat\`ern type, the native space is isomorphic to a Sobolev space \cite{wendland,sss13}.

\begin{proposition}\label{prop:native_matern} Let $k_{\nu,\lambda,\sigma_k^2}$ be a Mat\`ern covariance kernel as defined in \eqref{eq:mat_cov}. Then the native space $H_{k_{\nu,\lambda,\sigma_k^2}}$ is equal to the Sobolev space $H^{\nu+K/2}(X)$ as a vector space, and the native space norm and the Sobolev norm are equivalent. 
\end{proposition}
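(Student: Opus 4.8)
The plan is to reduce the statement to the whole space $\R^K$, where the native space of a stationary kernel has a clean Fourier characterisation, and then to transfer the conclusion back to the bounded set $X$ by a restriction/extension argument. Throughout, write $k_{\nu,\lambda,\sigma_k^2}(u,u') = \Phi(u-u')$ for the stationary profile $\Phi : \R^K \to \R$, and set $s := \nu + K/2$. First I would recall the Fourier characterisation of native spaces on $\R^K$ (e.g.\ \cite{wendland}, Thm.\ 10.12): since $\Phi$ is continuous, integrable and positive definite, its native space over $\R^K$ is
\begin{equation*}
H_\Phi(\R^K) = \Big\{ f \in L^2(\R^K) \cap C(\R^K) : \widehat f / \sqrt{\widehat\Phi} \in L^2(\R^K) \Big\},
\end{equation*}
with inner product $\langle f,g\rangle_{H_\Phi} = (2\pi)^{-K/2}\int_{\R^K} \widehat f(\omega)\,\overline{\widehat g(\omega)}\,/\,\widehat\Phi(\omega)\,d\omega$. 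Comparing this with the Fourier definition of the Sobolev norm, $\|f\|_{H^s(\R^K)}^2 = \int_{\R^K}(1+\|\omega\|^2)^s|\widehat f(\omega)|^2\,d\omega$, the problem on $\R^K$ reduces to showing that $1/\widehat\Phi(\omega)$ and $(1+\|\omega\|^2)^s$ are comparable, that is, each is bounded by a fixed positive multiple of the other uniformly in $\omega \in \R^K$.

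The second step is therefore to compute the spectral density $\widehat\Phi$; this is the one genuinely kernel-specific calculation. Using the known Fourier transform of the Mat\`ern function one obtains
\begin{equation*}
\widehat\Phi(\omega) = c_{\nu,\lambda,\sigma_k^2,K}\Big(\tfrac{2\nu}{\lambda^2} + \|\omega\|^2\Big)^{-(\nu+K/2)},
\end{equation*}
with an explicit positive constant $c_{\nu,\lambda,\sigma_k^2,K}$. Since $\nu,\lambda,\sigma_k^2$ are fixed and positive, the factor $\big(\tfrac{2\nu}{\lambda^2}+\|\omega\|^2\big)^{s}$ is squeezed between fixed positive multiples of $(1+\|\omega\|^2)^{s}$ for all $\omega$, and hence $1/\widehat\Phi$ is comparable to $(1+\|\omega\|^2)^s$. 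By the Fourier characterisation this shows $H_\Phi(\R^K)=H^s(\R^K)$ as vector spaces, with equivalent norms.

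Finally I would transfer the statement from $\R^K$ to the bounded set $X$. Two ingredients are needed. On the native-space side, the restriction theorem for native spaces (\cite{wendland}, Thm.\ 10.46) identifies $H_{k_{\nu,\lambda,\sigma_k^2}}$ over $X$ with the space of restrictions $\{f|_X : f\in H_\Phi(\R^K)\}$, equipped with the quotient norm $\|g\| = \inf\{\|f\|_{H_\Phi(\R^K)} : f|_X = g\}$. On the Sobolev side, provided $X$ has a sufficiently regular (e.g.\ Lipschitz) boundary, the Stein extension theorem supplies a bounded linear extension operator $E : H^s(X)\to H^s(\R^K)$, while restriction $H^s(\R^K)\to H^s(X)$ is trivially bounded. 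Combining these bounds with the norm equivalence already established on $\R^K$ yields two-sided estimates between the native-space norm on $X$ and $\|\cdot\|_{H^s(X)}$, together with the identification of the two spaces as sets, which is the claim.

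The main obstacle is this last step rather than the spectral computation. One must invoke the extension operator for $H^s(X)$, which is where the (implicitly assumed) regularity of $\partial X$ enters, and one must correctly match the infimal-extension native-space norm on $X$ with the restricted Sobolev norm, keeping track that all equivalence constants depend only on $\nu,\lambda,\sigma_k^2,K$ and $X$, not on the function. The spectral-density formula, although it rests on a nontrivial Bessel-function identity, can be quoted from the literature and only needs to be verified up to the uniform comparability of $1/\widehat\Phi$ with $(1+\|\omega\|^2)^s$, not in its exact constant.
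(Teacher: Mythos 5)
Your proposal is correct and matches the paper's own justification: the paper states this proposition without proof, citing \cite{wendland,sss13}, and your argument (Fourier characterisation of the native space over $\R^K$, the Mat\`ern spectral density $\widehat\Phi(\omega)\asymp(1+\|\omega\|^2)^{-(\nu+K/2)}$, then transfer to $X$ via native-space restriction and a bounded Sobolev extension operator) is precisely the standard proof found in those references. Your remark that the extension step silently requires boundary regularity of $X$ is also apt — the paper indeed imposes the Lipschitz/interior-cone condition on $X$ from Proposition \ref{prop:mean_conv} onwards, and that assumption is implicitly needed here as well.
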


Native spaces for more general kernels, including non-stationary kernels, are analysed in \cite{wendland}. For stationary kernels, the native space can generally be characterised by the rate of decay of the Fourier transform of the kernel. The native space of the Gaussian kernel \eqref{eq:gauss_cov}, for example, consists of functions whose Fourier transform decays exponentially, and is hence strictly contained in the space of analytic functions.
Proposition \ref{prop:native_matern} shows that as a vector space, the native space of the Mat\`ern kernel $k_{\nu,\lambda,\sigma_k^2}$ is fully determined by the smoothness parameter $\nu$. The parameters $\lambda$ and $\sigma_k^2$ do, however, influence the constants in the norm equivalence of the native space norm and the standard Sobolev norm.

\subsection{Radial basis function interpolation}\label{ssec:gp_sk}
For stationary covariance functions $k(u,u') = k(\|u-u'\|)$, the predictive mean is a radial basis functions interpolant of $f$. In fact, it is the minimum norm interpolant \cite{rasmussen_williams},
\begin{equation}\label{eq:mean_min}
m_N^f = \argmin_{g \in H_k \; : \; g(U) = f(U)} \|g\|_{H_k}.
\end{equation}
Given the set of design points $U = \{u^n\}_{n=1}^N \subseteq X$, we define the {fill distance} $h_U$, {separation radius} $q_U$ and {mesh ratio} $\rho_U$ by
\begin{equation*}
h_{U} := \sup_{u \in X} \inf_{u^n \in U} \|u-u^n\|, \qquad q_U := \frac{1}{2} \min_{i \neq j} \|u^j - u^i\|, \qquad \rho_U := \frac{h_U}{q_U} \geq 1.
\end{equation*}
The fill distance is the maximum distance any point in $X$ can be from $U$, and the separation radius is half the smallest distance between any two distinct points in $U$. The mesh ratio provides a measure of how uniformly the design points $U$ are distributed in X. 
We have the following theorem on the convergence of $m_N^f$ to $f$ \cite{wendland,nww05,nww06}.
 
\begin{proposition}\label{prop:mean_conv} Suppose $X \subseteq \mathbb R^K$ is a bounded, Lipschitz domain that satisfies an interior cone condition, and the symmetric positive definite kernel $k$ is such that $H_k$ is isomorphic to the Sobolev space $H^\tau(X)$, with $\tau = n + r$, $n \in \mathbb N$, $n > K/2$ and $0 \leq r < 1$. Suppose $m_N^f$ is given by \eqref{eq:pred_eq}. If $f \in H^\tau(X)$, then there exists a constant $C$, independent of $f$, $U$ and $N$, such that
\[
\| f - m_N^f\|_{H^\beta(X)} \leq C h_U^{\tau - \beta} \|f\|_{H^\tau(X)}, \qquad \text{for any } \beta \leq \tau,
\]
for all sets $U$ with $h_U$ sufficiently small.
\end{proposition}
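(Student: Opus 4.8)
The plan is to write the interpolation error as $e := f - m_N^f$ and exploit two facts: that $e$ vanishes on the design set $U$, and that $e$ is controlled in the native space norm. The result then follows by combining a \emph{sampling inequality} (which converts the scattered zeros of $e$ into a quantitative smallness estimate governed by $h_U$) with a \emph{stability estimate} for the native space norm of $e$ (which removes any dependence on $N$ or on the fine geometry of $U$).

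First I would establish the stability estimate. Since by \eqref{eq:mean_min} the predictor $m_N^f$ is the minimum-norm interpolant of the data $f(U)$ in $H_k$, and $f$ is itself an admissible interpolant of its own values, we have $\|m_N^f\|_{H_k} \leq \|f\|_{H_k}$, whence $\|e\|_{H_k} \leq 2\|f\|_{H_k}$ by the triangle inequality. The hypothesis that $H_k$ is isomorphic to $H^\tau(X)$ with equivalent norms (this being Proposition \ref{prop:native_matern} in the Mat\`ern case) then yields
\[
\|e\|_{H^\tau(X)} \leq C \|f\|_{H^\tau(X)},
\]
with $C$ independent of $f$, $U$ and $N$. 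This is the only place the minimum-norm characterisation enters.

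The heart of the argument, and the main obstacle, is the sampling inequality: for $g \in H^\tau(X)$ with $g|_U = 0$ and $h_U$ sufficiently small,
\[
\|g\|_{H^\beta(X)} \leq C\, h_U^{\tau - \beta} \|g\|_{H^\tau(X)}, \qquad \beta \leq \tau .
\]
To prove this I would argue locally. The interior cone condition, together with $h_U$ small, ensures that each point of $X$ lies in a patch (a ball or cone) containing enough points of $U$ to admit a stable local reproduction of polynomials of degree less than $n$; the hypothesis $n > K/2$ is precisely what guarantees, via the embedding $H^\tau(X) \hookrightarrow C(X)$, that point evaluation is a bounded functional and that a polynomial reproduction of the required order is available. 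On each patch a Bramble--Hilbert / Taylor-remainder argument bounds $g$ and its derivatives up to the relevant order in terms of the local $H^\tau$-seminorm, the power of $h_U$ arising from the diameter of the patch; because $g$ vanishes at the sampling points, the polynomial part of the local expansion is annihilated and only the higher-order remainder survives. Summing these local estimates over a covering of $X$ with bounded overlap produces the global bound. The integer part of the smoothness is handled directly, while the fractional exponent $r$ requires either a real-interpolation argument between consecutive integer Sobolev spaces or a direct estimate of the Gagliardo seminorm; this fractional bookkeeping is the most delicate point, and here I would lean on the scattered-data results of \cite{wendland,nww05,nww06} to supply the polynomial reproduction and Bramble--Hilbert machinery in the fractional-order setting.

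Combining the two estimates with $g = e$, and using that $e$ vanishes on $U$ because $m_N^f$ interpolates $f$, gives
\[
\|f - m_N^f\|_{H^\beta(X)} \leq C\, h_U^{\tau - \beta} \|e\|_{H^\tau(X)} \leq C\, h_U^{\tau - \beta} \|f\|_{H^\tau(X)},
\]
as required. I expect the sampling inequality, rather than the stability estimate, to absorb essentially all of the technical effort, the stability step being a short consequence of the variational characterisation \eqref{eq:mean_min} and the assumed norm equivalence.
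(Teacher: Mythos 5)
Your proposal is correct, and it is essentially the argument behind the result as the paper uses it: the paper states Proposition \ref{prop:mean_conv} without proof, citing \cite{wendland,nww05,nww06}, and those references prove it exactly by your two-step decomposition --- the native-space stability bound $\|f - m_N^f\|_{H_k} \leq 2\|f\|_{H_k}$ from the minimum-norm characterisation \eqref{eq:mean_min} (in fact orthogonality of the error to the interpolation subspace gives constant $1$), combined with the sampling inequality for functions with scattered zeros, whose local polynomial-reproduction/Bramble--Hilbert proof you sketch and correctly defer to the same literature for the fractional-order details.
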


Proposition \ref{prop:mean_conv} assumes that the function $f$ is in the RKHS of the kernel $k$. Convergence estimates for a wider class of functions can be obtained using interpolation in Sobolev spaces \cite{nww06}. 

\begin{proposition}\label{prop:mean_conv_int} Suppose $X \subseteq \mathbb R^K$ is a bounded, Lipschitz domain that satisfies an interior cone condition, and the symmetric positive definite kernel $k$ is such that $H_k$ is isomorphic to the Sobolev space $H^\tau(X)$. Suppose $m_N^f$ is given by \eqref{eq:pred_eq}. If $f \in H^{\tilde \tau}(X)$, for some $\tilde \tau \leq \tau$, $\tilde \tau = n + r$, $n \in \mathbb N$, $n > K/2$ and $0 \leq r < 1$, then there exists a constant $C$, independent of $f$, $U$ and $N$, such that
\[
\| f - m_N^f\|_{H^\beta(X)} \leq C h_U^{\tilde \tau - \beta} \rho_U^{\tau - \beta}\|f\|_{H^{\tilde \tau}(X)}, \qquad \text{for any } \beta \leq \tilde \tau,
\]
for all sets $U$ with $h_U$ and $\rho_U$ sufficiently small.
\end{proposition}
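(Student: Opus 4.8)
The plan is to reduce the claim, in which $f$ lies only in the larger Sobolev space $H^{\tilde\tau}(X)$ rather than in the native space $H^\tau(X) \cong H_k$, to the native-space estimate of Proposition \ref{prop:mean_conv} by a band-limited approximation argument in the spirit of Narcowich, Ward and Wendland \cite{nww06}. First I would extend $f$ to a function on all of $\R^K$ via a bounded Sobolev extension operator (available since $X$ is a bounded Lipschitz domain satisfying the interior cone condition), so that the Fourier transform $\hat f$ is defined. For a cut-off parameter $\sigma > 0$ I would set $f_\sigma := \mathcal F^{-1}\big(\hat f\,\mathbf 1_{\{\|\omega\| \le \sigma\}}\big)$, the truncation of $f$ to frequencies $\|\omega\| \le \sigma$. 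Two elementary Fourier estimates then quantify the trade-off: for $\beta \le \tilde\tau$ the tail obeys $\|f - f_\sigma\|_{H^\beta(X)} \le C\,\sigma^{\beta-\tilde\tau}\|f\|_{H^{\tilde\tau}(X)}$, while the band-limited part gains smoothness at a controlled price, $\|f_\sigma\|_{H^\tau(X)} \le C\,\sigma^{\tau-\tilde\tau}\|f\|_{H^{\tilde\tau}(X)}$. The point is that $f_\sigma$ now lies in the native space, where Proposition \ref{prop:mean_conv} is available.

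Writing $I_U f := m_N^f$ and using its linearity, I would split the error as
\[
f - I_U f = (f - f_\sigma) + (f_\sigma - I_U f).
\]
The first term is controlled directly by the tail estimate above. For the second term I observe that $f_\sigma - I_U f \in H^\tau(X)$, since both $f_\sigma$ and $I_U f$ do, and that its nodal values coincide with those of $-(f - f_\sigma)$ because $I_U f$ interpolates $f$ on $U$. I would then apply a sampling inequality (zeros lemma) of the form $\|g\|_{H^\beta(X)} \le C\big(h_U^{\tau-\beta}\|g\|_{H^\tau(X)} + h_U^{K/2-\beta}\,\|g|_U\|_{\ell^2}\big)$, valid for $g \in H^\tau(X)$ with $\tau > K/2$ and $h_U$ small, to $g = f_\sigma - I_U f$. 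This reduces the task to bounding (a) the native-space norm $\|f_\sigma - I_U f\|_{H^\tau(X)} \le \|f_\sigma\|_{H^\tau(X)} + \|I_U f\|_{H^\tau(X)}$ and (b) the discrete residual $\|(f - f_\sigma)|_U\|_{\ell^2}$.

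The crux is the stability bound $\|I_U f\|_{H^\tau(X)} \le C\,\rho_U^{\,\tau-\tilde\tau}\|f\|_{H^{\tilde\tau}(X)}$, which cannot follow from the minimum-norm property of Proposition \ref{prop:mean_conv} alone because $f \notin H^\tau(X)$ in a controlled way. Here I would invoke a Bernstein (inverse) inequality for kernel interpolants, equivalently a lower bound on the smallest eigenvalue of the Gram matrix $K(U,U)$ in terms of the separation radius $q_U$ (Narcowich--Ward-type eigenvalue estimates): splitting $I_U f = I_U f_\sigma + I_U(f - f_\sigma)$, the first piece is controlled by the minimum-norm property and the smoothness gain of $f_\sigma$, while the interpolant of the rough tail $I_U(f - f_\sigma)$ is bounded through the eigenvalue estimate together with a Marcinkiewicz--Zygmund-type inequality $\|(f-f_\sigma)|_U\|_{\ell^2}^2 \lesssim q_U^{-K}\|f - f_\sigma\|_{L^2(X)}^2$ for the residual. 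It is precisely this step that introduces the separation radius, and hence the mesh ratio $\rho_U = h_U/q_U$, into the estimate; the hypothesis $\tilde\tau = n + r > K/2$ is what guarantees that point evaluation is bounded and that these sampling and Bernstein inequalities are available.

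Finally I would collect the contributions and optimise over the bandwidth, taking $\sigma$ comparable to $h_U^{-1}$ (so that $\sigma \lesssim q_U^{-1}$ up to the mesh ratio). The $\sigma$-powers from the band-limiting estimates then combine with the $q_U$-powers from the Bernstein/eigenvalue step, and using $\rho_U \ge 1$ together with $\beta \le \tilde\tau \le \tau$ to absorb the subdominant contributions, one is left with exactly $C\,h_U^{\tilde\tau-\beta}\rho_U^{\,\tau-\beta}\|f\|_{H^{\tilde\tau}(X)}$. I expect the main obstacle to be the stability step of the preceding paragraph: obtaining a clean bound on $\|I_U(f-f_\sigma)\|_{H^\tau(X)}$ requires the sharp form of the sampling inequality with the $\ell^2$ data term and careful tracking of the $q_U$-dependence, and this bookkeeping is both the most technical part of the argument and the genuine source of the mesh-ratio factor that distinguishes this result from Proposition \ref{prop:mean_conv}.
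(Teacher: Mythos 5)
The first thing to note is that the paper contains no proof of this proposition to compare against: it is stated as a known result imported from the scattered-data interpolation literature, with the citation \cite{nww06} doing all the work (the introduction is explicit that the results of section \ref{sec:gp} are ``mainly known results''). Your sketch must therefore be judged against that reference, and in outline it follows the same road map as Narcowich, Ward and Wendland: band-limited approximation $f_\sigma$ of the rough function, native-space estimates (Proposition \ref{prop:mean_conv}) for the smooth part, Gram-matrix eigenvalue bounds and discrete sampling estimates for the interpolant of the high-frequency tail, with the mesh ratio entering through the separation radius. Your identification of where $\rho_U$ comes from and of the role of the hypothesis $\tilde\tau > K/2$ is correct.

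However, the two load-bearing lemmas in your sketch are misstated, and as written the argument does not close. First, the Marcinkiewicz--Zygmund step $\|(f-f_\sigma)|_U\|_{\ell^2}^2 \lesssim q_U^{-K}\|f-f_\sigma\|_{L^2(X)}^2$ is false: point values cannot be controlled by an $L^2$ norm (a bump of height one and width $\epsilon \ll q_U$ centred at a design point has arbitrarily small $L^2$ norm), and MZ inequalities of this form are theorems about band-limited functions, whereas $f-f_\sigma$ is precisely the high-pass part. The usable substitute is $\|(f-f_\sigma)|_U\|_{\ell^2} \le N^{1/2}\|f-f_\sigma\|_{L^\infty}$ combined with the tail bound $\|f-f_\sigma\|_{L^\infty} \lesssim \sigma^{K/2-\tilde\tau}\|f\|_{H^{\tilde\tau}(X)}$ --- this is where $\tilde\tau > K/2$ actually enters --- or a localized trace estimate on disjoint balls of radius $q_U$, and either choice changes the exponents in your bookkeeping. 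Second, your ``crux'' stability bound $\|I_U f\|_{H^\tau(X)} \le C\rho_U^{\tau-\tilde\tau}\|f\|_{H^{\tilde\tau}(X)}$ cannot hold in the native norm: for quasi-uniform points it would give $\sup_N \|I_{U_N} f\|_{H^\tau(X)} < \infty$, and since $I_{U_N} f \to f$ in $L^2(X)$, weak compactness would force $f \in H^\tau(X)$, contradicting $f \in H^{\tilde\tau}(X) \setminus H^\tau(X)$. What the eigenvalue argument actually yields, and what \cite{nww06} prove, is stability with a separation-radius penalty, $\|I_U f\|_{H^\tau(X)} \lesssim q_U^{-(\tau-\tilde\tau)}\|f\|_{H^{\tilde\tau}(X)}$, or equivalently stability in the norm of the data space, $\|I_U f\|_{H^{\tilde\tau}(X)} \lesssim \rho_U^{\tau-\tilde\tau}\|f\|_{H^{\tilde\tau}(X)}$. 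This error is reparable, since feeding the weaker bound into your sampling inequality still gives $h_U^{\tau-\beta} q_U^{\tilde\tau-\tau} = h_U^{\tilde\tau-\beta}\rho_U^{\tau-\tilde\tau} \le h_U^{\tilde\tau-\beta}\rho_U^{\tau-\beta}$; but the final ``collect and optimise'' step that you assert rather than carry out is where the real difficulty of \cite{nww06} lives (the corrected data term generates mesh-ratio powers such as $\rho_U^{K/2}$ whose absorption into $\rho_U^{\tau-\beta}$ must be checked for every $\beta \le \tilde\tau$). The cleanest way to close the argument, and the one taken in \cite{nww06}, is to prove the $H^{\tilde\tau}$-stability bound above and then apply the zeros lemma of \cite{nww05} at smoothness level $\tilde\tau$ to $f - I_U f$, which vanishes on $U$.
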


{ We would like to point out here that in practice, it is much more informative to obtain convergence rates in terms of the number of design points $N$ rather than their associated fill distance $h_U$. This is of course possible in general, but the precise relation between $N$ and $h_U$ will depend on the specific choice of design points $U$. For uniform tensor grids $U$, the fill distance $h_U$ is of the order $N^{-1/K}$ (cf section \ref{sec:num}). This suggests a strong dependence on the input dimension $K$ of the convergence rate in terms of the number of design points $N$.}

Convergence of the predictive variance $k_N(u,u)$ follows under the assumptions of Proposition \ref{prop:mean_conv} or Proposition \ref{prop:mean_conv_int} using the relation in Proposition \ref{prop:predvar_sup} below. This was already noted, {without proof,} in \cite{sss13}; 
we give a proof here for completeness.

\begin{proposition}\label{prop:predvar_sup} Suppose $m_N^f$ and $k_N$ are given by \eqref{eq:pred_eq}. Then
\[
k_N(u,u)^{\frac{1}{2}} = \sup_{\|g\|_{H_k}=1} | g(u) - m^g_N(u)|.
\]
\end{proposition}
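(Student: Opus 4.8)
The plan is to recognise the right-hand side as the operator norm of a bounded linear functional on $H_k$ and to identify its Riesz representer explicitly. I would define the functional $L_u : H_k \to \R$ by $L_u(g) = g(u) - m_N^g(u)$. Since both point evaluation and the interpolation map $g \mapsto m_N^g$ are bounded linear operations on $H_k$, the functional $L_u$ is bounded, so by the Riesz representation theorem there is a unique $r_u \in H_k$ with $L_u(g) = \langle g, r_u \rangle_{H_k}$ and $\sup_{\|g\|_{H_k}=1} |L_u(g)| = \|r_u\|_{H_k}$. The whole statement then reduces to showing that $\|r_u\|_{H_k}^2 = k_N(u,u)$.

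First I would establish that $m_N^g$ is the $H_k$-orthogonal projection of $g$ onto $V_U := \mathrm{span}\{k(u^n,\cdot)\}_{n=1}^N$. By the reproducing property (Definition \ref{def:rkhs}), a function $h \in H_k$ vanishes on $U$ if and only if $h \perp V_U$, so $V_U^\perp = \{h \in H_k : h(U) = 0\}$; combined with the minimum-norm characterisation \eqref{eq:mean_min}, this shows $g - m_N^g \in V_U^\perp$ and $m_N^g \in V_U$, i.e. $m_N^g = P_U g$ with $P_U$ the orthogonal projection onto $V_U$. (Alternatively, this can be read off directly from the explicit formula \eqref{eq:pred_eq}.)

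Next, using the reproducing property together with the self-adjointness and idempotency of $P_U$, I would rewrite the pointwise error as $L_u(g) = \langle (I - P_U)g, k(u,\cdot) \rangle_{H_k} = \langle g, (I - P_U)k(u,\cdot) \rangle_{H_k}$, which identifies the representer $r_u = (I - P_U)k(u,\cdot)$. Its norm is then $\|r_u\|_{H_k}^2 = \langle k(u,\cdot), (I - P_U)k(u,\cdot) \rangle_{H_k} = k(u,u) - (P_U k(u,\cdot))(u)$. The interpolant $P_U k(u,\cdot)$ equals $\sum_{n=1}^N \beta_n k(u^n,\cdot)$ with $\beta = K(U,U)^{-1} k(u,U)$, so $(P_U k(u,\cdot))(u) = k(u,U)^T K(U,U)^{-1} k(u,U)$, giving $\|r_u\|_{H_k}^2 = k(u,u) - k(u,U)^T K(U,U)^{-1} k(u,U) = k_N(u,u)$ by \eqref{eq:pred_eq}. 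Taking the supremum over $\|g\|_{H_k} = 1$ and invoking Cauchy--Schwarz, with equality attained at $g = r_u / \|r_u\|_{H_k}$, completes the proof.

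There is no single computationally hard step; once the orthogonal-projection structure is in place the argument is essentially bookkeeping. The conceptual heart, and the point I would treat most carefully, is the identification $V_U^\perp = \{h : h(U) = 0\}$ and the consequent fact that the minimum-norm interpolant coincides with $P_U g$, since this is precisely where the reproducing property enters. A minor subtlety worth noting explicitly is that the supremum is genuinely attained rather than merely approached, which holds because the normalised representer $r_u/\|r_u\|_{H_k}$ is itself an admissible test function.
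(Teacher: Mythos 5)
Your proof is correct, and it reaches the paper's conclusion by a noticeably more structural route. The paper's own proof is a direct computation: it substitutes the explicit formula \eqref{eq:pred_eq} into $g(u)-m_N^g(u)$, uses the reproducing property term by term to exhibit the representer $k(\cdot,u)-k(\cdot,U)^T K(U,U)^{-1}k(u,U)$, converts the supremum into the $H_k$-norm of this function via the equality case of Cauchy--Schwarz (exactly your Riesz step, done by hand), and then expands the squared norm as a quadratic form, using matrix algebra such as $\ell(u)^T K(U,U)\ell(u)=k(u,U)^T K(U,U)^{-1}k(u,U)$ with $\ell(u)=K(U,U)^{-1}k(u,U)$ to collapse the cross terms to $k_N(u,u)$. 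You instead first establish that $g\mapsto m_N^g$ is the orthogonal projection $P_U$ onto $V_U=\mathrm{span}\{k(u^n,\cdot)\}_{n=1}^N$ (via $V_U^{\perp}=\{h : h(U)=0\}$ and the minimum-norm characterisation \eqref{eq:mean_min}), identify the representer abstractly as $(I-P_U)k(u,\cdot)$, and let self-adjointness and idempotency of $P_U$ do the work of the paper's quadratic-form expansion: $\|(I-P_U)k(u,\cdot)\|_{H_k}^2=\langle k(u,\cdot),(I-P_U)k(u,\cdot)\rangle_{H_k}=k(u,u)-k(u,U)^T K(U,U)^{-1}k(u,U)$. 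The two arguments pivot on the same representer and the same duality principle; what your version buys is that the cancellation of cross terms is explained rather than computed, and the argument survives verbatim with any closed subspace in place of $V_U$, whereas the paper's version is more elementary and self-contained, requiring nothing beyond the reproducing property and Cauchy--Schwarz. One trivial point to patch in your write-up: when $u\in U$ the representer $r_u$ vanishes and $g=r_u/\|r_u\|_{H_k}$ is undefined, but then both sides of the claimed identity are zero, so the statement holds anyway (the paper's appeal to the equality case of Cauchy--Schwarz has the same degenerate case).
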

\begin{proof} For any $u \in X$, we have 
\begin{align*}
\sup_{\|g\|_{H_k}=1} | g(u) - m^g_N(u)| &= \sup_{\|g\|_{H_k}=1} \Big| g(u) - \sum_{j=1}^N (k(u,U)^T K(U,U)^{-1} )_j g(u^j) \Big| \\
&= \sup_{\|g\|_{H_k}=1} \Big|\left \langle g, k(\cdot,u) \right\rangle_{H_k} - \sum_{j=1}^N (k(u,U)^T K(U,U)^{-1} )_j  \langle g, k(\cdot,u^j) \rangle_{H_k} \Big| \\
&=  \sup_{\|g\|_{H_k}=1} \Big|\langle g, k(\cdot,u) - \sum_{j=1}^N (k(u,U)^T K(U,U)^{-1} )_j  k(\cdot,u^j) \rangle_{H_k} \Big| \\
&= \| k(\cdot,u) - k(\cdot,U)^T K(U,U)^{-1} k(u,U)\|_{H_k}.
\end{align*}
The final equality follows from the Cauchy-Schwarz inequality, which becomes an equality when the two functions considered are linearly dependent.
By Definition \ref{def:rkhs}, we then have
\begin{align*}
&\| k(\cdot,u) - k(\cdot,U)^T K(U,U)^{-1} k(u,U)\|_{H_k}^2 \\
& \qquad = \langle k(\cdot,u) - k(\cdot,U)^T K(U,U)^{-1} k(u,U), k(\cdot,u) - k(\cdot,U)^T K(U,U)^{-1} k(u,U) \rangle_{H_k} \\
& \qquad = \langle k(\cdot,u) , k(\cdot,u)  \rangle_{H_k} - 2 \langle k(\cdot,u) , k(\cdot,U)^T K(U,U)^{-1} k(u,U)\rangle_{H_k} \\
& \qquad \qquad + \langle k(\cdot,U)^T K(U,U)^{-1} k(u,U),  k(\cdot,U)^T K(U,U)^{-1} k(u,U) \rangle_{H_k} \\
& \qquad = k(u,u) - 2 k(u,U)^T K(U,U)^{-1} k(u,U) + k(u,U)^T K(U,U)^{-1} k(u,U) \\
&\qquad = k_N(u,u).
\end{align*}
{The identity which leads to the third term in the penultimate line
uses the fact that $\langle k(\cdot,u'),  k(\cdot,u) \rangle_{H_k} = k(u,u')$, 
for any $u,u' \in X.$ If $\ell(u)=K(U,U)^{-1}k(u,U)$ then
\begin{align*}
\langle k(\cdot,U)^T K(U,U)^{-1} k(u,U),  k(\cdot,U)^T K(U,U)^{-1} k(u,U) \rangle_{H_k}&=\sum_{j,k} \ell_j(u) \langle k(\cdot,u^j),k(\cdot,u^k)\rangle_{H_k}\ell_k(u)\\
&= \sum_{j,k} \ell_j(u) k(u^j,u^k)\ell_k(u)\\
&=\ell(u)^TK(U,U) \ell(u)\\
&= k(u,U)^T K(U,U)^{-1} k(u,U) 
\end{align*}
as required.}
This completes the proof.
\end{proof}

{The second string of equalities, appearing in the middle part of
the proof Proposition \ref{prop:predvar_sup}, might appear counter-intuitive at first glance in that the left-most
quantity is a norm squared of quantities which scale like $k$, whilst the
right-most quantity scales like $k$ itself. However, the space $H_k$ itself 
depends on the kernel $k$, and scales inversely proportional to $k$,
explaining that the identity is indeed dimensionally correct.}

\begin{remark}{\em ({\em Exponential convergence for the Gaussian kernel})  The RKHS corresponding to the Gaussian kernel \eqref{eq:gauss_cov} is no longer isomorphic to a Sobolev space; it is contained in $H^\tau(X)$, for any $\tau < \infty$. For functions $f$ in this RKHS,  Gaussian process regression with the Gaussian kernel converges exponentially in the fill distance $h_U$. For more details, see \cite{wendland}.}
\end{remark}

\begin{remark}\label{rem:mean}{\em ({\em Regression with non-zero mean}) If in \eqref{eq:gp} we use a non-zero mean $m(\cdot)$, the formula for the predictive mean $m_N^f$ changes to
\begin{equation}\label{eq:pred_eq_mean}
m_N^f(u) = m(u) + k(u,U)^T K(U,U)^{-1} (f(U) - m(U)),
\end{equation}
where $m(U) := [m(u^1), \dots, m(u^N)] \in \R^N$. The predictive covariance $k_N(u,u')$ is as in \eqref{eq:pred_eq}. As in the case $m \equiv 0$, we have $m_N^f(u^n) = f(u^n)$, for $n=1, \dots, N$, and $m_N^f$ is an interpolant of $f$. If $m \in H_k$, then $m_N^f$ given by \eqref{eq:pred_eq_mean} is also in $H_k$, and the proof techniques in \cite{nww05,nww06} can be applied. The conclusions of Propositions \ref{prop:mean_conv} and \ref{prop:mean_conv_int} then hold, with the factor $\|f\|$ in the error bounds replaced by $\|f\| + \|m\|$.
}
\end{remark}

\section{Approximation of the Bayesian posterior distribution}\label{sec:gp_app}
In this {section}, we analyse the error introduced in the posterior distribution $\mu^y$ when we use a Gaussian process emulator to approximate the parameter-to-observation map $\mathcal G$ or the negative log-likelihood $\Phi$. The aim is to show convergence, in a suitable sense, of the approximate posterior distributions to the true posterior distribution as the number of observations $N$ tends to infinity. For a given approximation $\mu^{y,N}$ of the posterior distribution {$\mu^y$}, we will focus on bounding the Hellinger distance \cite{stuart10} between the two distributions, which is defined as
\[
\dhh(\mu^y, \mu^{y,N}) = \left( \frac{1}{2} \Large{\int}_{X} \left(\sqrt{\frac{d \mu^y}{d \mu_0}} - \sqrt{\frac{d \mu^{y,N}}{d \mu_0}} \right)^2 d \mu_0 \right)^{1/2}.
\]
As proven in \cite[Lemma 6.12 and 6.14]{ds15}, the Hellinger distance provides a bound for the Total Variation distance
\[
\dtv(\mu^y, \mu^{y,N}) = \frac{1}{2} \sup_{\|f\|_\infty \leq 1} \left| \EE_{\mu^y}(f) - \EE_{\mu^{y,N}}(f) \right| \leq \sqrt{2} \; \dhh(\mu^y, \mu^{y,N}),
\] 
and for $f \in L^2_{\mu^y}(X) \cap L^2_{\mu^{y,N}}(X)$, the Hellinger distance also provides a bound on the error in expected values
\[
\left| \EE_{\mu^y}(f) - \EE_{\mu^{y,N}}(f) \right| \leq 2 (\EE_{\mu^y}(f^2) + \EE_{\mu^{y,N}}(f^2))^{1/2} \; \dhh(\mu^y, \mu^{y,N}).
\]

Depending on how we make use of the predictive process $\mathcal G_N$ or $\Phi_N$ to approximate the Radon-Nikodym derivative $\frac{\mathrm d \mu^y}{\mathrm d \mu_0}$, we obtain different approximations to the posterior distribution $\mu^y$. We will distinguish between approximations based solely on the predictive mean, and approximations that make use of the full predictive process.

\subsection{Approximation based on the predictive mean}
Using simply the predictive mean of a Gaussian process emulator of the parameter-to-observation map $\mathcal G$ or the negative log-likelihood $\Phi$, we can define the approximations $\mu^{y,N, \mathcal G}_\mathrm{mean}$ and $\mu^{y,N, \Phi}_\mathrm{mean}$, given by
\begin{align*}
\frac{d\mu^{y,N, \mathcal G}_\mathrm{mean}}{d\mu_0}(u) &= \frac{1}{Z_{N, \mathcal G}^\mathrm{mean}} \exp\big(-\frac{1}{2 \sigma_\eta^2} \left\| y -  m^\mathcal G _N(u) \right\|^2\big), \\
Z_{N, \mathcal G}^\mathrm{mean} &= \EE_{\mu_0}\Big(\exp\big(-\frac{1}{2 \sigma_\eta^2} \left\| y -  m^\mathcal G _N(u) \right\|^2\big)\Big), \\
\frac{d\mu^{y,N, \Phi}_\mathrm{mean}}{d\mu_0}(u) &= \frac{1}{Z_{N, \Phi}^\mathrm{mean}} \exp\big(- m_N^\Phi(u)\big), \\
Z_{N, \Phi}^\mathrm{mean} &= \EE_{\mu_0}\Big(\exp\big(-m_N^\Phi(u)\big)\Big),
\end{align*}
{where $m_N^\mathcal G(u) = [m_N^{\mathcal G^1}(u), \dots, m_N^{\mathcal G^J}(u)] \in \R^J$.} We have the following {lemma concerning} the normalising constants $Z_{N, \mathcal G}^\mathrm{mean}$ and $Z_{N, \Phi}^\mathrm{mean}$, {which is followed
by the main Theorem \ref{thm:hell_mean} and Corollary \ref{cor:rate_mean}
concerning the
approximations $\mu^{y,N, \mathcal G}_\mathrm{mean}, \mu^{y,N, \Phi}_\mathrm{mean}.$} 

\begin{lemma}\label{thm:bound_zmean} Suppose $\sup_{u \in X} \| \mathcal G(u) -  m^\mathcal G _N(u) \|$ and $\sup_{u \in X} | \Phi(u) -  m^\Phi_N(u) |$ converge to 0 as $N$ tends to $\infty$, and assume $\sup_{u \in X}\|\mathcal G(u)\| \leq C_\mathcal G$. Then there exist positive constants $C_1$ and $C_2$, independent of $U$ and $N$, such that
\[
C_1 \leq Z_{N, \mathcal G}^\mathrm{mean} \leq 1 \qquad \text{and} \qquad {C_2^{-1}} \leq Z_{N, \Phi}^\mathrm{mean} \leq {C_2}.
\]
\end{lemma}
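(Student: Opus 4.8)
The plan is to exploit two elementary facts: that $\mu_0$ is a probability measure, so that integrating a bound which is uniform in $u$ reproduces that bound, and that a convergent sequence of nonnegative reals is bounded, so that the emulation errors are controlled uniformly in $N$. Concretely, I would first introduce
\[
M_{\mathcal G} := \sup_N \sup_{u \in X} \|\mathcal G(u) - m^{\mathcal G}_N(u)\|, \qquad M_\Phi := \sup_N \sup_{u \in X} |\Phi(u) - m^\Phi_N(u)|,
\]
both finite because the hypothesised sup-norm errors tend to $0$ and hence form bounded sequences. Together with $C_{\mathcal G}$, $C_\Phi$ and $\|y\|$, these quantities will furnish the constants, which by construction do not depend on $U$ or $N$.

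For $Z_{N,\mathcal G}^\mathrm{mean}$ the upper bound is immediate: the exponent $-\tfrac{1}{2\sigma_\eta^2}\|y - m^{\mathcal G}_N(u)\|^2$ is nonpositive, so the integrand is at most $1$, and since $\mu_0(X)=1$ we obtain $Z_{N,\mathcal G}^\mathrm{mean}\le 1$. For the lower bound I would bound $\|y - m^{\mathcal G}_N(u)\|$ from above by the triangle inequality,
\[
\|y - m^{\mathcal G}_N(u)\| \le \|y\| + \|\mathcal G(u)\| + \|\mathcal G(u) - m^{\mathcal G}_N(u)\| \le \|y\| + C_{\mathcal G} + M_{\mathcal G} =: R,
\]
uniformly in $u$ and $N$, so that the integrand is everywhere at least $\exp(-R^2/(2\sigma_\eta^2))$; integrating against $\mu_0$ then yields $Z_{N,\mathcal G}^\mathrm{mean} \ge \exp(-R^2/(2\sigma_\eta^2)) =: C_1 > 0$.

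The $\Phi$ case is even simpler because the required bound is two-sided. I would use $|m^\Phi_N(u)| \le |\Phi(u)| + |\Phi(u) - m^\Phi_N(u)| \le C_\Phi + M_\Phi$ uniformly in $u$ and $N$, whence $-m^\Phi_N(u) \in [-(C_\Phi+M_\Phi),\, C_\Phi+M_\Phi]$. Exponentiating and integrating against the probability measure $\mu_0$ gives $\exp(-(C_\Phi+M_\Phi)) \le Z_{N,\Phi}^\mathrm{mean} \le \exp(C_\Phi+M_\Phi)$, so that $C_2 := \exp(C_\Phi+M_\Phi)$ satisfies the claim.

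No step is genuinely difficult here; the only point to get right, and the place where the argument could silently fail, is the assertion that the constants are independent of $U$ and $N$. The convergence hypothesis does more than control a tail: because $\sup_{u}\|\mathcal G(u)-m^{\mathcal G}_N(u)\|\to 0$ and $\sup_{u}|\Phi(u)-m^\Phi_N(u)|\to 0$ are convergent sequences they are bounded over all $N$, which is exactly what makes $M_{\mathcal G}$ and $M_\Phi$ finite and the resulting bounds uniform. The finiteness of $C_{\mathcal G}$ and $C_\Phi$ is guaranteed by the boundedness of $\mathcal G$ assumed in Assumption \ref{ass:reg} (equivalently, by continuity and the compactness of $X$).
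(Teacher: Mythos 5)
Your proposal is correct and follows essentially the same route as the paper: upper-bound $Z_{N,\mathcal G}^{\mathrm{mean}}$ by $1$ using non-negativity of the potential, and obtain all remaining bounds by controlling $\sup_{u}\|y-m_N^{\mathcal G}(u)\|$ and $\sup_u|m_N^\Phi(u)|$ uniformly in $N$ via the triangle inequality, the bound $C_{\mathcal G}$ (resp.\ $C_\Phi$), and the fact that convergent sequences are bounded, then integrating against the probability measure $\mu_0$. If anything, your version is slightly cleaner: by bounding the norm before squaring (rather than ``summing squares'' as the paper does) and by making the constant $C_2=\exp(C_\Phi+M_\Phi)\ge 1$ explicit, you avoid two small infelicities in the paper's write-up while proving exactly the stated inequalities.
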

\begin{proof}
Let us first consider $Z_{N, \mathcal G}^\mathrm{mean}$. The upper bound follows from a straight forward calculation, since the potential $\frac{1}{2 \sigma_\eta^2} \left\| y -  m^\mathcal G _N(u) \right\|^2$ is non-negative:
\[
Z_{N, \mathcal G}^\mathrm{mean} = \EE_{\mu_0}\Big(\exp\big(-\frac{1}{2 \sigma_\eta^2} \left\| y -  m^\mathcal G _N(u) \right\|^2\big)\Big) \leq \EE_{\mu_0}(1) = 1.
\]
For the lower bound, we have
\[
Z_{N, \mathcal G}^\mathrm{mean} \geq \EE_{\mu_0} \Big(\exp\big(- \frac{1}{2 \sigma_\eta^2} \; \sup_{u \in X}  \left\| y -  m^\mathcal G _N(u) \right\|^2\big)\Big) = \exp\big(- \frac{1}{2 \sigma_\eta^2} \; \sup_{u \in X}  \left\| y -  m^\mathcal G _N(u) \right\|^2\big),
\]
since $\int_X \mu_0(\mathrm d u) = 1$. Using the triangle inequality, the assumption $\sup_{u \in X}\|\mathcal G(u)\| \leq C_\mathcal G$ and the fact that every convergent sequence is bounded, we have
\begin{equation}\label{eq:bound_lemzmean}
\sup_{u \in X} \left\| y -  m^\mathcal G _N(u) \right\|^2 \leq \sup_{u \in X}  \| y -  \mathcal G(u) \|^2 + \sup_{u \in X} \left\| \mathcal G(u) -  m^\mathcal G _N(u) \right\|^2 =: - \ln C_1,
\end{equation}
where $C_1$ is independent of $U$ and $N$.

The proof for $Z_{N, \Phi}^\mathrm{mean}$ is similar. For the upper bound, we use $\int_X \mu_0(\mathrm d u) = 1$ and the triangle inequality to derive
\[
Z_{N, \Phi}^\mathrm{mean} \leq \sup_{u\in X} \exp\big(-m_N^\Phi(u)\big) \leq \exp\big(\sup_{u \in X} |m_N^\Phi(u)|\big) \leq \exp\big( \sup_{u\in X} |\Phi(u)| + \sup_{u\in X} |\Phi(u) - m_N^\Phi(u)| \big).
\]
Since $\sup_{u \in X}|\Phi(u)|$ is bounded when $\sup_{u \in X}\|\mathcal G(u)\|$ is bounded, the fact that every convergent sequence is bounded again gives
\[
\sup_{u\in X} |\Phi(u)| + \sup_{u\in X} |\Phi(u) - m_N^\Phi(u)| =: - \ln C_2,
\]
for a constant $C_2$ independent of $U$ and $N$. For the lower bound, we note that since $\int_X \mu_0(\mathrm d u) = 1$,
\[
Z_{N, \Phi}^\mathrm{mean} \geq \EE_{\mu_0}\Big(\exp\big(-\sup_{u \in X} |m_N^\Phi(u)|\big)\Big) = \exp\big(-\sup_{u \in X} |m_N^\Phi(u)|\big) \geq C_2^{-1}.
\]
\end{proof}

{ We would like to point out here that the assumptions in Lemma \ref{thm:bound_zmean} can be relaxed to assuming that the sequences $\sup_{u \in X} \| \mathcal G(u) -  m^\mathcal G _N(u) \|$ and $\sup_{u \in X} | \Phi(u) -  m^\Phi_N(u) |$ are bounded, since this is sufficient to prove the result.}

{We may now prove the desired theorem and corollary concerning 
$\mu^{y,N,\mathcal G}_\mathrm{mean}$ and $\mu^{y,N, \Phi}_\mathrm{mean}.$}

\begin{theorem}\label{thm:hell_mean} Under the Assumptions of {Lemma} \ref{thm:bound_zmean}, there exist constants $C_1$ and $C_2$, independent of $U$ and $N$, such that
\begin{align*}
\dhh(\mu^y, \mu^{y,N,\mathcal G}_\mathrm{mean}) & \leq C_1 \left\|\mathcal G -  m^\mathcal G _N \right\|_{L^2_{\mu_0}(X; \R^J)},  \\
\text{and} \quad \dhh(\mu^y, \mu^{y,N, \Phi}_\mathrm{mean}) &\leq C_2 \left\|\Phi - m^\Phi _N \right\|_{L^2_{\mu_0}(X)}.
\end{align*}
\end{theorem}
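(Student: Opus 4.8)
The plan is to follow the standard well-posedness machinery for Bayesian inverse problems (as in \cite{stuart10}), applied in parallel to the two surrogate potentials. Write $\Phi^{\mathcal G}_N(u) := \frac{1}{2\sigma_\eta^2}\|y - m^{\mathcal G}_N(u)\|^2$ in the $\mathcal G$-case and $\Phi^{\Phi}_N := m_N^\Phi$ in the $\Phi$-case, so that in both settings the approximate density has the form $\frac{1}{Z_N}e^{-\Phi_N}$ with $Z_N$ the associated normalising constant, while $\frac{d\mu^y}{d\mu_0} = \frac{1}{Z}e^{-\Phi}$. Expanding the definition of $\dhh$ and inserting the telescoping decomposition
\[
\frac{1}{\sqrt Z}e^{-\Phi/2} - \frac{1}{\sqrt{Z_N}}e^{-\Phi_N/2} = \frac{1}{\sqrt Z}\big(e^{-\Phi/2} - e^{-\Phi_N/2}\big) + \Big(\frac{1}{\sqrt Z} - \frac{1}{\sqrt{Z_N}}\Big)e^{-\Phi_N/2},
\]
followed by $(a+b)^2 \le 2a^2 + 2b^2$, splits $2\,\dhh^2$ into a \emph{potential term} $\frac{2}{Z}\int_X(e^{-\Phi/2}-e^{-\Phi_N/2})^2\,d\mu_0$ and a \emph{normalisation term} $2(Z^{-1/2}-Z_N^{-1/2})^2 Z_N = \frac{2}{Z}(\sqrt Z - \sqrt{Z_N})^2$. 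Lemma \ref{thm:bound_zmean} guarantees that $Z$ and $Z_N$ are bounded above and below by positive constants uniformly in $N$, which is what keeps all constants below independent of $U$ and $N$.

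For the potential term I would use that $x \mapsto e^{-x/2}$ is Lipschitz on the range of the potentials. In the $\mathcal G$-case both $\Phi$ and $\Phi^{\mathcal G}_N$ are nonnegative, so $|e^{-\Phi/2}-e^{-\Phi_N/2}| \le \tfrac12|\Phi - \Phi_N|$ pointwise; in the $\Phi$-case $m_N^\Phi$ need not be nonnegative, so instead I would bound the slope using the uniform bound $\sup_u|m_N^\Phi| \le C_\Phi + \sup_u|\Phi - m_N^\Phi|$, which is finite because a convergent sequence is bounded. Integrating the squared pointwise estimate and applying Cauchy--Schwarz immediately turns the potential term in the $\Phi$-case into a multiple of $\|\Phi - m_N^\Phi\|_{L^2_{\mu_0}(X)}^2$, which will deliver the second inequality once the normalisation term is handled the same way.

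The one genuinely $\mathcal G$-specific step is converting $|\Phi - \Phi^{\mathcal G}_N|$ into $\|\mathcal G - m_N^{\mathcal G}\|$. Here I would use the polarisation identity $\|a\|^2 - \|b\|^2 = \langle a - b, a + b\rangle$ with $a = y - \mathcal G(u)$ and $b = y - m_N^{\mathcal G}(u)$, giving
\[
|\Phi(u) - \Phi^{\mathcal G}_N(u)| \le \frac{1}{2\sigma_\eta^2}\,\|\mathcal G(u) - m_N^{\mathcal G}(u)\|\,\|2y - \mathcal G(u) - m_N^{\mathcal G}(u)\|,
\]
and then bound the second factor uniformly in $u$ using that $\|y\|$ is fixed, $\sup_u\|\mathcal G(u)\| \le C_\mathcal G$, and $\sup_u\|\mathcal G - m_N^{\mathcal G}\|$ is bounded (again a convergent, hence bounded, sequence). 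This yields $|\Phi - \Phi^{\mathcal G}_N| \le C\|\mathcal G - m_N^{\mathcal G}\|$ pointwise, so integrating renders the potential term a multiple of $\|\mathcal G - m_N^{\mathcal G}\|_{L^2_{\mu_0}(X;\R^J)}^2$.

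Finally, for the normalisation term I would bound $|Z - Z_N| \le \int_X|e^{-\Phi} - e^{-\Phi_N}|\,d\mu_0 \le \int_X|\Phi - \Phi_N|\,d\mu_0$, using Lipschitz continuity of $e^{-x}$ on the relevant range, and then Cauchy--Schwarz with the same pointwise estimates to bound $|Z - Z_N|$ by the corresponding $L^2$ error; since $(\sqrt Z - \sqrt{Z_N})^2 \le (Z - Z_N)^2/Z$ with $Z$ a fixed positive constant, the normalisation term carries the \emph{square} of the error, matching the potential term. Collecting the two contributions and taking square roots gives both inequalities. The main obstacle is bookkeeping the uniformity: every Lipschitz constant and every ``sum'' factor must be bounded independently of $N$, which is precisely where the boundedness of the emulation error and of the normalising constants (Lemma \ref{thm:bound_zmean}) enter, and the $\Phi$-case additionally forces the sign-agnostic Lipschitz bound because $m_N^\Phi$ can be negative.
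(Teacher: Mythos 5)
Your proposal is correct and follows essentially the same route as the paper: the identical splitting of $2\,\dhh^2$ into the potential term $I$ and normalisation term $II$, local Lipschitz continuity of the exponential, reduction of $|\Phi-\Phi_N^{\mathcal G}|$ to $\|\mathcal G-m_N^{\mathcal G}\|$ times a uniformly bounded factor (your polarisation identity is equivalent to the paper's $a^2-b^2=(a-b)(a+b)$ plus reverse triangle inequality), and Lemma \ref{thm:bound_zmean} to control the normalising constants. The only cosmetic difference is in term $II$, where you use $(\sqrt Z-\sqrt{Z_N})^2\le (Z-Z_N)^2/Z$ while the paper bounds $(Z^{-1/2}-Z_N^{-1/2})^2\le \max(Z^{-3},Z_N^{-3})|Z-Z_N|^2$ and then applies Jensen's inequality; both yield the same conclusion.
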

\begin{proof}
Let us first consider $\mu^{y,N,\mathcal G}_\mathrm{mean}$. By definition of the Hellinger distance, we have
\begin{align*}
&2  \; \dhh^2(\mu^y, \mu^{y,N,\mathcal G}_\mathrm{mean}) = \int_X \left( \sqrt{\frac{d\mu^y}{d\mu_0}} - \sqrt{\frac{d\mu^{y,N,\mathcal G}_\mathrm{mean}}{d\mu_0}} \right)^2 \mu_0(\mathrm{d}u) \\
&\leq \frac{2}{Z} \int_X \left(\exp\big(-\frac{1}{4 \sigma_\eta^2} \left\| y -  \mathcal G (u) \right\|^2\big) - \exp\big(-\frac{1}{4 \sigma_\eta^2} \left\| y -  m^\mathcal G _N(u) \right\|^2\big)\right)^2 \mu_0(\mathrm{d}u) \\
&\qquad + 2 \, Z_{N,\mathcal G}^\mathrm{mean} \left(Z^{-1/2} - (Z_{N, \mathcal G}^\mathrm{mean})^{-1/2} \right)^2 \\
&=: I + II.
\end{align*}
For the first term, we use the local Lipschitz continuity of the exponential function, together with the equality $a^2 - b^2 = (a-b)(a+b)$ and the reverse triangle inequality to bound
\begin{align*}
\frac{Z}{2} \; I &= \int_X \left(\exp\big(-\frac{1}{4 \sigma_\eta^2} \left\| y -  \mathcal G (u) \right\|^2\big) - \exp\big(-\frac{1}{4 \sigma_\eta^2} \left\| y -  m^\mathcal G _N(u) \right\|^2\big)\right)^2 \mu_0(\mathrm{d}u) \\
&\leq \int_X \left( \frac{1}{2 \sigma_\eta^2} \left( \left\| y -  \mathcal G(u) \right\|^2 - \left\| y -  m^\mathcal G _N(u) \right\|^2  \right) \right)^2 \mu_0(\mathrm{d}u) \\
&= \int_X \frac{1}{4 \sigma_\eta^4} \left( \| y - \mathcal G(u)\| + \|y - m^\mathcal G _N(u)\| \right)^2 \left\|\mathcal G(u) -  m^\mathcal G _N(u) \right\|^2  \mu_0(\mathrm{d}u) \\
&\leq \frac{1}{4 \sigma_\eta^4} \sup_{u \in X} \left(  \| y - \mathcal G(u)\| + \|y - m^\mathcal G _N(u)\| \right)^2 \left\|\mathcal G(u) -  m^\mathcal G _N(u) \right\|_{L^2_{\mu_0}(X; \R^J)}^2 
\end{align*}
As in equation \eqref{eq:bound_lemzmean}, the first supremum can be bounded independently of $U$ and $N$, from which it follows that 
\[
I \leq C \left\|\mathcal G(u) -  m^\mathcal G _N(u) \right\|_{L^2_{\mu_0}(X; \R^J)}^2,
\]
for a constant $C$ independent of $U$ and $N$.
For the second term, a very similar argument, together with {Lemma} \ref{thm:bound_zmean} and Jensen's inequality, shows 
\begin{align*}
II &= 2 \, Z_{N,\mathcal G}^\mathrm{mean} \left(Z^{-1/2} - (Z_{N, \mathcal G}^\mathrm{mean})^{-1/2} \right)^2 \\
&\leq 2 \, Z_{N,\mathcal G}^\mathrm{mean} \max(Z^{-3},(Z_{N, \mathcal G}^\mathrm{mean})^{-3}) |Z - Z_{N, \mathcal G}^\mathrm{mean}|^2 \\
&= 2 \, Z_{N,\mathcal G}^\mathrm{mean} \max(Z^{-3},(Z_{N\mathcal G}^\mathrm{mean})^{-3}) \left(\int_X \exp\big(-\frac{1}{4 \sigma_\eta^2} \left\| y -  \mathcal G (u) \right\|^2\big) - \exp\big(-\frac{1}{4 \sigma_\eta^2} \left\| y -  m^\mathcal G _N(u) \right\|^2\big) \mu_0(\mathrm{d}u)\right)^2 \\
&\leq C \left\|\mathcal G(u) -  m^\mathcal G _N(u) \right\|_{L^2_{\mu_0}(X; \R^J)}^2,
\end{align*}
for a constant $C$ independent of $U$ and $N$. 

The proof for $\mu^{y,N, \Phi}_\mathrm{mean}$ is similar. We use an identical corresponding splitting of the Hellinger distance $\dhh(\mu^y, \mu^{y,N, \Phi}_\mathrm{mean}) \leq I + II$. Using the local Lipschitz continuity of the exponential function, 
we have
\begin{align*}
\frac{Z}{2} \; I &= \int_X \left(\exp\big(-\Phi(u)\big) - \exp\big(- m^\Phi _N(u)\big)\right)^2 \mu_0(\mathrm{d}u) \\
&\leq  (1 + \exp\big(\sup_{u \in X} |m_N^\Phi(u)|\big)) \left\|\Phi(u) - m^\Phi _N(u) \right\|_{L^2_{\mu_0}(X)}^2,
\end{align*}
where the first factor can be bounded independently of $U$ and $N$ as in Lemma \ref{thm:bound_zmean}.

Using {Lemma} \ref{thm:bound_zmean} and Jensen's inequality, we furthermore have 
\begin{align*}
II &\leq 2 \, Z_{N,\Phi}^\mathrm{mean} \max(Z^{-3},(Z_{N, \Phi}^\mathrm{mean})^{-3}) \left(\int_X \exp\big(-\Phi(u)\big) - \exp\big(- m^\Phi _N(u)\big) \mu_0(\mathrm{d}u)\right)^2 \\
&\leq C \left\|\Phi(u) - m^\Phi _N(u) \right\|_{L^2_{\mu_0}(X)}^2,
\end{align*}
for a constant $C$ independent of $U$ and $N$.
\end{proof}

We remark here that Theorem \ref{thm:hell_mean} does not make any assumptions on the predictive means $m_N^\mathcal G$ and $m_N^\Phi$ other than the requirement that $\sup_{u \in X} \| \mathcal G(u) -  m^\mathcal G _N(u) \|$ and $\sup_{u \in X} | \Phi(u) -  m^\Phi_N(u) |$ converge to 0 as $N$ tends to $\infty$. Whether the predictive means are defined as in \eqref{eq:pred_eq}, or are derived by alternative approaches to Gaussian process regression \cite{rasmussen_williams}, does not affect the conclusions of Theorem \ref{thm:hell_mean}.
Under Assumption \ref{ass:reg}, we can combine Theorem \ref{thm:hell_mean} with Proposition \ref{prop:mean_conv} (or Proposition \ref{prop:mean_conv_int}) 
{with $\beta=0$} to obtain error bounds in terms of the fill distance of the design points.

\begin{corollary}\label{cor:rate_mean} Suppose $m_N^\Phi$ and $m_N^{\mathcal G^j}$, $j=1,\dots,J$, are defined as in \eqref{eq:pred_eq}, with Mat\`ern kernel $k=k_{\nu,\lambda,\sigma_k^2}$. Suppose { Assumption A holds}, Assumption \ref{ass:reg} { holds with $s=\nu + K/2$,} and the assumptions of Proposition \ref{prop:mean_conv} and Theorem \ref{thm:hell_mean} are satisfied. Then there exist constants $C_1$ and $C_2$, independent of $U$ and $N$, such that
\begin{align*}
\dhh(\mu^y, \mu^{y,N,\mathcal G}_\mathrm{mean}) \leq C_1 h_U^{\nu + K/2}, \quad
\text{and} \quad \dhh(\mu^y, \mu^{y,N, \Phi}_\mathrm{mean}) \leq C_2 h_U^{\nu + K/2}.
\end{align*}
\end{corollary}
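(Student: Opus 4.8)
The plan is to combine the abstract Hellinger bounds of Theorem \ref{thm:hell_mean} with the interpolation error estimate of Proposition \ref{prop:mean_conv}, specialised to $\beta = 0$. By Proposition \ref{prop:native_matern}, the native space of the Mat\`ern kernel $k_{\nu,\lambda,\sigma_k^2}$ is isomorphic to $H^\tau(X)$ with $\tau = \nu + K/2$, and the choice $s = \nu + K/2$ in Assumption \ref{ass:reg} matches this index exactly. Thus every function we interpolate (each component $\mathcal{G}^j$ and the potential $\Phi$) lies in $H^\tau(X)$, the RKHS smoothness class required by Proposition \ref{prop:mean_conv}.

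First I would invoke Theorem \ref{thm:hell_mean}, whose hypotheses hold by assumption, to reduce both quantities to $L^2_{\mu_0}$ norms of the emulator error, namely $\dhh(\mu^y, \mu^{y,N,\mathcal G}_\mathrm{mean}) \le C_1 \|\mathcal{G} - m_N^{\mathcal{G}}\|_{L^2_{\mu_0}(X;\R^J)}$ and the analogous bound for $\Phi$. Next I would pass from the $\mu_0$-weighted norm to the Lebesgue norm $L^2(X)$ appearing in Proposition \ref{prop:mean_conv}; provided $\mu_0$ has a density bounded above with respect to Lebesgue measure (as holds, for example, for the uniform prior on the compact set $X$), one has $\|f\|_{L^2_{\mu_0}(X)} \le C \|f\|_{L^2(X)}$ with $C$ depending only on the prior, not on $U$ or $N$. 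For the scalar case I would then apply Proposition \ref{prop:mean_conv} with $f = \Phi$, $\tau = \nu + K/2$ and $\beta = 0$, giving $\|\Phi - m_N^\Phi\|_{L^2(X)} \le C h_U^{\nu + K/2}\|\Phi\|_{H^{\nu+K/2}(X)}$, and absorb $\|\Phi\|_{H^{\nu+K/2}(X)}$ into the constant. For the vector-valued map I would split the norm componentwise, $\|\mathcal{G} - m_N^{\mathcal{G}}\|_{L^2_{\mu_0}(X;\R^J)}^2 = \sum_{j=1}^J \|\mathcal{G}^j - m_N^{\mathcal{G}^j}\|_{L^2_{\mu_0}(X)}^2$, apply the bound to each of the finitely many components, and sum.

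The steps above are essentially bookkeeping, so the points that actually need care are the consistency of the various hypotheses rather than any single hard estimate. The one genuine check is that all three sets of assumptions align: Assumption \ref{ass:reg} must supply the smoothness $s = \nu+K/2$ with $s > K/2$ (automatic since $\nu > 0$), the decomposition $\tau = n + r$ with $n \in \N$, $n > K/2$ and $0 \le r < 1$ demanded by Proposition \ref{prop:mean_conv} must be admissible for $\tau = \nu + K/2$, and the sup-norm convergence required by Theorem \ref{thm:hell_mean} must hold. The last of these is consistent with Proposition \ref{prop:mean_conv} applied at any $\beta \in (K/2, \nu + K/2)$, since the Sobolev embedding $H^\beta(X) \hookrightarrow L^\infty(X)$ then yields $\sup_{u\in X}\|\mathcal{G}(u) - m_N^{\mathcal{G}}(u)\| \le C h_U^{\nu + K/2 - \beta} \to 0$ as $h_U \to 0$, and likewise for $\Phi$. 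Finally I would confirm that every constant produced is independent of $U$ and $N$: this holds because the constant in Proposition \ref{prop:mean_conv} is independent of $f$, $U$ and $N$, the Sobolev norms $\|\mathcal{G}^j\|_{H^\tau(X)}$ and $\|\Phi\|_{H^\tau(X)}$ are fixed, and the prior-dependent constant from the change of measure is likewise fixed.
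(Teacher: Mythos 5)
Your proof is correct and follows essentially the same route as the paper, which obtains Corollary \ref{cor:rate_mean} precisely by combining Theorem \ref{thm:hell_mean} with Proposition \ref{prop:mean_conv} (via Proposition \ref{prop:native_matern}, $\tau = \nu + K/2$) at $\beta = 0$, splitting the vector-valued case componentwise. Your explicit treatment of the passage from $\|\cdot\|_{L^2_{\mu_0}(X)}$ to $\|\cdot\|_{L^2(X)}$ under a bounded prior density, and your verification that the sup-norm convergence hypothesis of Theorem \ref{thm:hell_mean} is compatible with Proposition \ref{prop:mean_conv} via Sobolev embedding, are points the paper leaves implicit but are handled correctly here.
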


{If Assumption \ref{ass:reg} holds only for some $s < \nu + K/2$, an analogue of Corollary \ref{cor:rate_mean} can be proved using Proposition \ref{prop:mean_conv_int} with $\beta = 0$. As already discussed in section \ref{ssec:gp_sk}, translating convergence rates in terms of the fill distance $h_U$ into rates in terms of the number of points $N$ typically leads to a strong dependence on the input dimension $K$. For uniform tensor grids $U$, the rates of convergence in $N$ predicted by Corollary \ref{cor:rate_mean} are given in Table \ref{tbl:conv}.}

\subsection{Approximations based on the predictive process}
Alternative to the mean-based approximations considered in the previous section, we now consider approximations to the posterior distribution $\mu^y$ obtained using the full predictive processes $\mathcal G_N$ and $\Phi_N$. {In contrast to the mean, the full Gaussian processes also carry information about the uncertainty in the emulator due to only using a finite number of function evaluations to construct it.}

For the remainder of this section, we denote by $\nu^\mathcal G_N$ the distribution of $\mathcal G_N$ and by $\nu^\Phi_N$ the distribution of $\Phi_N$, {for $N \in \mathbb N \cup \{0\}$}. We note that since the process $\mathcal G_N$ consists of $J$ independent Gaussian processes $\mathcal G_N^j$, the measure $\nu^\mathcal G_N$ is a product measure, $\nu^\mathcal G_N  = \prod_{j=1}^J \nu^{\mathcal G^j}_N$. {$\Phi_N$ is a Gaussian process with mean $m_N^\Phi$ and covariance kernel $k_N$, and $\mathcal G_N^j$, for $j=1, \dots, J$, is a Gaussian process with mean $m_N^{\mathcal G^j}$ and covariance kernel $k_N$.} Replacing $\mathcal G$ by $\mathcal G_N$ in \eqref{eq:def_like}, we obtain the approximation $\mu^{y,N,\mathcal G}_\mathrm{sample}$ given by
\begin{equation*}\label{eq:rad_nik_sample}
\frac{d\mu^{y,N,\mathcal G}_\mathrm{sample}}{d\mu_0}(u) = \frac{1}{Z_{N, \mathcal G}^\mathrm{sample}} \exp\big(-\frac{1}{2 \sigma_\eta^2} \left\| y -  \mathcal G_N (u) \right\|^2\big),
\end{equation*}
where 
\[
Z_{N, \mathcal G}^\mathrm{sample}= \EE_{\mu_0}\Big(\exp\big(-\frac{1}{2 \sigma_\eta^2} \left\| y -  \mathcal G_N (u) \right\|^2\big)\Big).
\]
Similarly, we define for the predictive process $\Phi_N$ the approximation $\mu^{y,N,\Phi}_\mathrm{sample}$ by
\begin{align*}
\frac{d\mu^{y,N,\Phi}_\mathrm{sample}}{d\mu_0}(u) = \frac{1}{Z_{N, \Phi}^\mathrm{sample}} \exp\big(- \Phi_N(u)\big), \qquad Z_{N, \Phi}^\mathrm{sample} = \EE_{\mu_0}\Big(\exp\big(- \Phi_N(u)\big)\Big).
\end{align*}
The measures $\mu^{y,N,\mathcal G}_\mathrm{sample}$ and $\mu^{y,N,\Phi}_\mathrm{sample}$ are random approximations of the deterministic measure $\mu^y.$ { The uncertainty in the posterior distribution introduced in this way can be thought of representing the uncertainty in the emulator, which in applications can be large (or comparable) to the uncertainty present in the observations. A user may want to take this into account to "inflate" the variance of the posterior distribution.} 

Deterministic approximations of the posterior distribution $\mu^y$ can now be obtained by taking the expected value with respect to the predictive processes $\mathcal G_N$ and $\Phi_N$. This results in the marginal approximations
\begin{align*}
\frac{d\mu^{y,N,\mathcal G}_\mathrm{marginal}}{d\mu_0}(u) &= \frac{1}{\EE_{\nu_N^\mathcal G}(Z_{N, \mathcal G}^\mathrm{sample})} \EE_{\nu_N^\mathcal G}\Big(\exp\big(-\frac{1}{2 \sigma_\eta^2} \left\| y -  \mathcal G_N (u) \right\|^2\big)\Big), \\
\frac{d\mu^{y,N,\Phi}_\mathrm{marginal}}{d\mu_0}(u) &= \frac{1}{\EE_{\nu_N^\Phi} (Z_{N, \Phi}^\mathrm{sample})} \EE_{\nu_N^\Phi} \Big(\exp\big(-\Phi_N (u) \big)\Big). 
\end{align*}
Note that by Tonelli's Theorem {(\cite{rudin}, a version of Fubini's Theorem for non-negative integrands)}, the measures $\mu^{y,N,\mathcal G}_\mathrm{marginal}$ and $\mu^{y,N,\Phi}_\mathrm{marginal}$ are indeed probability measures. { It can be shown that the above approximation of the likelihood is optimal in the sense that it minimises the $L^2$-error \cite{sn16}. In contrast to the approximation based on only the mean of the emulator, this approximation also takes into account the uncertainty of the emulator, although only in an averaged sense. The likelihood in the marginal approximations $\mu^{y,N,\mathcal G}_\mathrm{marginal}$ and $\mu^{y,N,\Phi}_\mathrm{marginal}$ involves computing an expectation. Methods from the pseudo-marginal MCMC literature \cite{ar09} could be used within an MCMC method in this context.}

Before proving bounds on the error in the marginal approximations $\mu^{y,N,\mathcal G}_\mathrm{marginal}$ and $\mu^{y,N,\Phi}_\mathrm{marginal}$ in section \ref{ssec:gp_app_marg}, and the error in the random approximations $\mu^{y,N,\mathcal G}_\mathrm{sample}$ and $\mu^{y,N,\Phi}_\mathrm{sample}$ in section \ref{ssec:gp_app_rand}, we crucially prove boundedness of the normalising constants $Z_{N, \mathcal G}^\mathrm{sample}$ and $Z_{N, \Phi}^\mathrm{sample}$ in section \ref{ssec:gp_app_z}.

\subsubsection{Moment bounds on $Z_{N, \mathcal G}^\mathrm{sample}$ and $Z_{N, \Phi}^\mathrm{sample}$}\label{ssec:gp_app_z}
Firstly, we recall the following classical results from the theory of Gaussian measures on Banach spaces \cite{daprato_zabczyk,adler}.

\begin{proposition}\label{prop:fernique} {\em (Fernique's Theorem)} Let $E$ be a separable Banach space and $\nu$ a centred Gaussian measure on $(E, \mathcal B(E))$. If $\lambda, r >0$ are such that
\[
\log \left( \frac{1- \nu(f \in E : \|f\|_E \leq r)}{\nu(f \in E : \|f\|_E \leq r)}  \right) \leq -1 -32 \lambda r^2,
\] 
then
\[
\int_E \exp(\lambda \|f\|_E^2) \nu(\mathrm{d} f) \; \leq \; \exp(16 \lambda r^2) + \frac{e^2}{e^2-1.}
\]
\end{proposition}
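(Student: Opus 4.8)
The plan is to prove Proposition \ref{prop:fernique} by the classical symmetrization argument that exploits the rotation invariance of centred Gaussian measures, as in \cite{daprato_zabczyk,adler}. First I would introduce two independent $E$-valued random variables $X,Y$, each with law $\nu$, and observe that the whole argument rests on a single structural fact: since $\nu$ is a \emph{centred} Gaussian measure, the pair $(X,Y)$ is a centred Gaussian element of $E\times E$ whose law is invariant under the orthogonal rotation $(x,y)\mapsto\big((x-y)/\sqrt2,\,(x+y)/\sqrt2\big)$. Hence $U:=(X-Y)/\sqrt2$ and $V:=(X+Y)/\sqrt2$ are again \emph{independent}, each with law $\nu$. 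I would justify this by checking equality of the characteristic functionals $\EE\exp\big(i\ell_1(U)+i\ell_2(V)\big)$ for $\ell_1,\ell_2\in E^*$, which reduces to the fact that the product covariance is isotropic across the two coordinates. This is the conceptual heart of the proof.

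Next I would convert this into a tail inequality. Writing $X=(U+V)/\sqrt2$ and $Y=(V-U)/\sqrt2$ and using the triangle inequality, the event $\{\|X\|_E\le s,\ \|Y\|_E> t\}$ with $0<s<t$ forces both $\|U\|_E>(t-s)/\sqrt2$ and $\|V\|_E>(t-s)/\sqrt2$; combining this with independence gives
\[
\nu\big(\|f\|_E\le s\big)\,\nu\big(\|f\|_E> t\big)\;\le\;\nu\Big(\|f\|_E>\tfrac{t-s}{\sqrt2}\Big)^{2}.
\]
Fixing $s=r$ and defining the increasing sequence $t_0=r$, $t_{n+1}=r+\sqrt2\,t_n$ (so that $(t_{n+1}-r)/\sqrt2=t_n$), this becomes a recursion for the normalised tails $\alpha_n:=\nu(\|f\|_E>t_n)/\nu(\|f\|_E\le r)$, namely $\alpha_{n+1}\le\alpha_n^2$, which iterates to the doubly-exponential decay $\alpha_n\le\alpha_0^{2^n}$. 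The hypothesis is precisely the statement that $\alpha_0\le e^{-1-32\lambda r^2}$, and this is what feeds the decay into the integral estimate.

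Finally I would decompose $E$ into the ball $\{\|f\|_E\le r\}$ and the shells $\{t_n<\|f\|_E\le t_{n+1}\}$, $n\ge0$. On the ball, $\exp(\lambda\|f\|_E^2)\le e^{\lambda r^2}\le e^{16\lambda r^2}$ with mass at most $1$; on the $n$-th shell, $\exp(\lambda\|f\|_E^2)\le e^{\lambda t_{n+1}^2}$ and the mass is at most $\nu(\|f\|_E>t_n)\le\alpha_0^{2^n}\le e^{-(1+32\lambda r^2)2^n}$, so summing produces a series of the form $\sum_n\exp\big(\lambda t_{n+1}^2-(1+32\lambda r^2)2^n\big)$. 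The main obstacle, and the only genuinely delicate part, is the bookkeeping of constants. Solving the recursion gives $t_{n+1}^2\le 32\cdot 2^n r^2$, so the factor $32$ in the hypothesis is tuned exactly to cancel the quadratic growth $\lambda t_{n+1}^2$ against the term $32\lambda r^2\,2^n$, leaving each shell contribution bounded by $e^{-2^n}$. Using $2^n\ge 2n$ then yields $\sum_{n\ge0}e^{-2^n}\le\sum_{n\ge0}e^{-2n}=e^2/(e^2-1)$, and adding the ball contribution gives the claimed bound $\exp(16\lambda r^2)+e^2/(e^2-1)$. The convergence is never in doubt once $\alpha_n\le\alpha_0^{2^n}$ is established; all the work is in tracking the geometric growth of $t_n$ so that the advertised constants emerge.
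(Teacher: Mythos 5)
Your proof is correct. The paper itself gives no proof of this proposition --- it is quoted as a classical result with a citation to \cite{daprato_zabczyk,adler} --- and your argument is precisely the standard Fernique proof found in those references: rotation invariance of the product measure under $(x,y)\mapsto\big((x-y)/\sqrt2,(x+y)/\sqrt2\big)$, the resulting tail inequality $\nu(\|f\|_E\le s)\,\nu(\|f\|_E>t)\le\nu\big(\|f\|_E>(t-s)/\sqrt2\big)^2$, the doubly-exponential recursion $\alpha_{n+1}\le\alpha_n^2$ along the sequence $t_{n+1}=r+\sqrt2\,t_n$, and the shell decomposition. Your constant bookkeeping also checks out: solving the recursion gives $t_{n+1}\le r(2+2\sqrt2)\,2^{n/2}$, hence $t_{n+1}^2\le(12+8\sqrt2)\,2^n r^2\le 32\cdot 2^n r^2$, each shell contributes at most $e^{-2^n}$, and $\sum_{n\ge0}e^{-2^n}\le\sum_{n\ge0}e^{-2n}=e^2/(e^2-1)$, while the ball contributes at most $e^{\lambda r^2}\le e^{16\lambda r^2}$; this reproduces exactly the advertised bound.
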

\vspace{1.5ex}
{\begin{proposition}\label{prop:borell_tis} {\em (Borell-TIS Inequality\footnote{{The Borell-TIS inequality is named after the mathematicians Borell and Tsirelson, Ibragimov and Sudakov, who independently proved the result.}})} Let $f$ be a scalar, almost surely bounded Gaussian field on a compact domain $T \subseteq \R^K$, with zero mean $\EE(f(t)) = 0$ and bounded variance $0 < \sigma^2_f := \sup_{t \in T} \VV(f(t)) < \infty$. Then $\EE(\sup_{t \in T} f(t)) < \infty$, and for all $r > 0$,
\[
\mathbb P(\sup_{t \in T} f(t) - \EE(\sup_{t \in T} f(t)) > r ) \leq \exp(-r^2/2\sigma_f^2).
\]
\end{proposition}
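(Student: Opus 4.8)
The statement combines a finiteness claim with a sub-Gaussian tail bound, and the natural route is to recognise the supremum as a Lipschitz functional of an underlying finite-dimensional Gaussian vector and then invoke Gaussian concentration of measure. First I would reduce to finite dimensions. Since $f$ is almost surely bounded on the compact set $T$ and we may work with a separable version, choose a countable dense set $\{t_i\}_{i \geq 1} \subseteq T$ with $\sup_{t \in T} f(t) = \sup_{i \geq 1} f(t_i)$ almost surely, and set $M_n := \max_{1 \leq i \leq n} f(t_i)$, so that $M_n \uparrow M := \sup_{t \in T} f(t)$ pointwise. For each fixed $n$, the vector $(f(t_1), \dots, f(t_n))$ is centred Gaussian with covariance $\Sigma \in \R^{n \times n}$, and we may write it as $\Sigma^{1/2} Z$ with $Z \sim \mathcal N(0, I_n)$. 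Then $M_n = F(Z)$, where $F(z) = \max_{1 \leq i \leq n} \langle a_i, z \rangle$ and $a_i$ is the $i^{\mathrm{th}}$ row of $\Sigma^{1/2}$.

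The key quantitative step is that $F$, being a maximum of linear forms, is Lipschitz with constant $\max_i \|a_i\|$, and $\|a_i\|^2 = (\Sigma^{1/2}(\Sigma^{1/2})^T)_{ii} = \Sigma_{ii} = \VV(f(t_i)) \leq \sigma_f^2$, so the Lipschitz constant is at most $\sigma_f$. Applying the Gaussian concentration inequality for Lipschitz functions — itself a consequence of the Gaussian isoperimetric inequality, which I would take as the fundamental analytic input — to the standard Gaussian $Z$ then yields $\mathbb P(M_n - \EE(M_n) > r) \leq \exp(-r^2/(2\sigma_f^2))$ for every $n$ and every $r > 0$.

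It remains to establish finiteness of $\EE(M)$ and to pass to the limit $n \to \infty$. Each $\EE(M_n)$ is finite as the mean of a maximum of finitely many Gaussians, but I need a bound uniform in $n$, and for this I would pass through medians $m_n$ of $M_n$: the two-sided concentration bound around the median gives $\EE|M_n - m_n| \leq C\sigma_f$ uniformly in $n$, while $M_n \uparrow M < \infty$ almost surely forces the increasing sequence $m_n$ to converge to the finite median of $M$. Hence $\EE(M_n) \leq m_n + C\sigma_f$ is bounded, and monotone convergence gives $\EE(M) = \lim_n \EE(M_n) < \infty$, establishing the first assertion. Finally, fixing $r$ and $\epsilon > 0$, the inclusion $\{M > \EE(M) + r\} \subseteq \liminf_n \{M_n - \EE(M_n) > r - 2\epsilon\}$ (valid for large $n$, since $M_n \to M$ and $\EE(M_n) \to \EE(M)$), together with Fatou's lemma and the finite-dimensional bound, yields $\mathbb P(M - \EE(M) > r) \leq \exp(-(r - 2\epsilon)^2/(2\sigma_f^2))$; letting $\epsilon \to 0$ completes the proof.

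I expect the main obstacle to be the finiteness of $\EE(M)$ and the associated interchange of limit and probability, rather than the concentration estimate itself: the tail bound for each fixed $n$ is immediate once the Lipschitz constant $\sigma_f$ is identified, but controlling $\EE(M_n)$ uniformly requires the median detour and genuinely uses the almost-sure boundedness hypothesis, and the limiting step must be arranged so that it does not degrade the variance proxy $\sigma_f^2$ in the exponent.
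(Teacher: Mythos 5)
The first thing to note is that the paper does not prove this proposition at all: it is recalled as a classical result from the theory of Gaussian measures, with a citation to \cite{daprato_zabczyk,adler}, and is used purely as a black box in the proof of Lemma \ref{thm:bound_zsample}. So there is no paper proof to compare against, and your argument must be judged on its own terms; on those terms it is essentially correct, and it is in fact the standard textbook proof (the one in Adler and Taylor, \emph{Random Fields and Geometry}). The structure --- reduction to a countable dense subset, identification of $M_n = \max_{i \le n} f(t_i)$ as $F(Z)$ with $F$ a maximum of linear forms and hence Lipschitz with constant $\max_i \Sigma_{ii}^{1/2} \le \sigma_f$, finite-dimensional Gaussian concentration, uniform control of $\EE(M_n)$ through medians, and the monotone-convergence/Fatou passage to the limit with an $\epsilon$-slack removed at the end --- is exactly how the extension from finite index sets to compact $T$ is done, and your treatment of the two delicate points (uniform boundedness of $\EE(M_n)$, and not degrading the variance proxy $\sigma_f^2$ in the exponent when passing to the limit) is sound. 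Two caveats. First, the identity $\sup_{t \in T} f(t) = \sup_i f(t_i)$ a.s.\ presupposes a separable version of $f$; this is the standard convention for an ``almost surely bounded Gaussian field'' (and is automatic in the paper's application, where the fields have continuous sample paths), but without it the supremum need not even be measurable, so it is a hypothesis rather than something you may ``choose''. Second, your attribution of the mean-centred bound $\mathbb P\big(F(Z) - \EE F(Z) > r\big) \le \exp\big(-r^2/2L^2\big)$ to the Gaussian isoperimetric inequality is slightly off: isoperimetry most directly yields concentration about the \emph{median}, and converting to the mean with the same sharp constant is not free (a naive conversion shifts $r$ by a multiple of $\sigma_f$, which would pollute the final statement); the mean-centred form with sharp constant is the Cirel'son--Ibragimov--Sudakov theorem, alternatively obtainable from the Gaussian log-Sobolev inequality via the Herbst argument. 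Since that finite-dimensional inequality is precisely the finite-index-set case of Proposition \ref{prop:borell_tis} itself, it is the one ingredient you must import rather than derive --- you do flag it as your fundamental input, which is the honest thing to do --- and the genuine content of your proof, the limiting argument, is carried out correctly.
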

\vspace{1.5ex}
\begin{proposition}\label{prop:sud_fern} {\em (Sudakov-Fernique Inequality)} Let $f$ and $g$ be scalar, almost surely bounded Gaussian fields on a compact domain $T \subseteq \R^K$. Suppose $\EE((f(t)-f(s))^2) \leq \EE((g(t)-g(s))^2)$ and $\EE(f(t)) = \EE(g(t))$, for all $s,t \in T$. Then
\[
\EE(\sup_{t \in T} f(t)) \leq \EE(\sup_{t \in T} g(t)). 
\]
\end{proposition}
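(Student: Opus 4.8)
The plan is to prove the inequality first for a finite index set and then pass to the compact domain $T$ by approximation. Since $f$ and $g$ are almost surely bounded, they admit separable versions, so I may fix an increasing sequence of finite sets $T_1 \subseteq T_2 \subseteq \cdots \subseteq T$ whose union is dense in $T$; then $\sup_{t \in T_n} f(t) \uparrow \sup_{t \in T} f(t)$ almost surely, and likewise for $g$. Proposition \ref{prop:borell_tis} guarantees that $\EE(\sup_{t \in T} f(t))$ and $\EE(\sup_{t \in T} g(t))$ are finite, so the monotone convergence theorem transfers a finite-index inequality to the limit. It therefore suffices to prove, for jointly Gaussian vectors $X = (X_1,\dots,X_n)$ and $Y = (Y_1,\dots,Y_n)$ with $\EE((X_i - X_j)^2) \leq \EE((Y_i - Y_j)^2)$, that $\EE(\max_i X_i) \leq \EE(\max_i Y_i)$. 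The common, possibly non-zero, mean may be subtracted off at the outset: it cancels in the increment condition and merely translates the argument of the maximum, so one reduces to the centred case.

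For the finite case I would use a Gaussian interpolation argument. Realise $X$ and $Y$ on a common probability space as \emph{independent} centred vectors with the prescribed covariances $\Sigma^X, \Sigma^Y$, and set $Z(s) = \sqrt{s}\,X + \sqrt{1-s}\,Y$ for $s \in [0,1]$, so that $Z(1) \stackrel{d}{=} X$ and $Z(0) \stackrel{d}{=} Y$. To obtain a smooth functional I replace the maximum by the soft-max $f_\beta(x) = \beta^{-1}\log\sum_{i=1}^n e^{\beta x_i}$, which satisfies $\max_i x_i \leq f_\beta(x) \leq \max_i x_i + \beta^{-1}\log n$. Setting $\phi(s) = \EE(f_\beta(Z(s)))$, the goal is to show $\phi'(s) \leq 0$, whence $\phi(1) \leq \phi(0)$, i.e. $\EE(f_\beta(X)) \leq \EE(f_\beta(Y))$, and then to let $\beta \to \infty$.

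The heart of the computation is to differentiate $\phi$ and apply Gaussian integration by parts (Stein's identity). Differentiating under the expectation produces a sum of terms $\EE(\partial_i f_\beta(Z(s))\,(\tfrac{1}{2\sqrt{s}}X_i - \tfrac{1}{2\sqrt{1-s}}Y_i))$; since $X$ and $Y$ are independent, $\EE(X_i Z_k(s)) = \sqrt{s}\,\Sigma^X_{ik}$ and $\EE(Y_i Z_k(s)) = \sqrt{1-s}\,\Sigma^Y_{ik}$, so Stein's identity converts each term into second derivatives with the $\sqrt{s}$ and $\sqrt{1-s}$ factors cancelling. This yields $\phi'(s) = \tfrac{1}{2}\sum_{i,k}(\Sigma^X_{ik} - \Sigma^Y_{ik})\,\EE(\partial_i\partial_k f_\beta(Z(s)))$. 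A direct calculation shows the Hessian of $f_\beta$ equals $\beta(\mathrm{diag}(p) - pp^T)$ with $p_i = e^{\beta x_i}/\sum_k e^{\beta x_k}$ and $\sum_i p_i = 1$. Writing $A = \Sigma^X - \Sigma^Y$ and using $\sum_i p_i = 1$ to symmetrise, the integrand becomes $\beta\sum_{i,k}\big(\tfrac{1}{2}(A_{ii} + A_{kk}) - A_{ik}\big)p_i p_k$, and the coefficient $\tfrac12(A_{ii}+A_{kk}) - A_{ik}$ is exactly $-\tfrac12(\EE((Y_i-Y_k)^2) - \EE((X_i-X_k)^2)) \leq 0$ by hypothesis, while $p_i p_k \geq 0$. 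Hence $\phi'(s) \leq 0$ as required.

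The main obstacle will be the two technical reductions rather than the algebra: justifying the interchange of differentiation and expectation and the application of Stein's identity uniformly in $s$, including at the endpoints where the $1/\sqrt{s}$ factor is singular, where one relies on the cancellation produced by integration by parts together with integrability bounds for derivatives of $f_\beta$ against the Gaussian density; and the passage from finite subsets to the compact domain $T$, which needs separability of the fields and the finiteness of the expected suprema supplied by Proposition \ref{prop:borell_tis}. Once these are in place, letting $\beta \to \infty$ and invoking dominated convergence (again using integrability of Gaussian maxima) completes the finite-dimensional step, and monotone convergence finishes the proof.
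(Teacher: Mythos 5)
The paper never proves this proposition: it is quoted as a classical result, with the reader referred to \cite{daprato_zabczyk,adler}, so there is no internal proof to compare against. Your interpolation argument is the standard textbook proof (essentially the one in Adler--Taylor) and it is essentially sound: the reduction to finite index sets via separability and monotone convergence, the smart path $Z(s)=\sqrt{s}\,X+\sqrt{1-s}\,Y$ built from independent copies, the soft-max regularisation, Gaussian integration by parts yielding $\phi'(s)=\tfrac12\sum_{i,k}(\Sigma^X_{ik}-\Sigma^Y_{ik})\,\EE(\partial_i\partial_k f_\beta(Z(s)))$, the Hessian formula $\beta(\mathrm{diag}(p)-pp^T)$, the identity $\tfrac12(A_{ii}+A_{kk})-A_{ik}=\tfrac12\bigl(\EE((X_i-X_k)^2)-\EE((Y_i-Y_k)^2)\bigr)\le 0$, and the passage $\beta\to\infty$ are all correct, and the direction of the inequality comes out right. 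One step deserves more precise wording: when the common mean function $m(t)=\EE f(t)=\EE g(t)$ is non-constant, you cannot literally ``reduce to the centred case,'' since $\EE\max_i(X^0_i+\mu_i)$ is not $\EE\max_i X^0_i$ plus a constant; the correct move is to run the identical interpolation on the centred vectors with the shifted test function $x\mapsto f_\beta(x+\mu)$, whose Hessian has the same form $\beta(\mathrm{diag}(p)-pp^T)$ with $p$ evaluated at $x+\mu$, so the sign argument is unaffected. Your phrase ``translates the argument of the maximum'' suggests you intend exactly this, but as written it could be read as discarding the mean, which would be wrong. A second small point: Proposition \ref{prop:borell_tis} as stated in the paper applies to \emph{centred} fields, so to invoke it for finiteness you should apply it to $f-m$ and $g-m$; in fact finiteness is not even needed for your limiting step, because the finite-set inequality passes to the limit in $(-\infty,+\infty]$ by monotone convergence, the suprema being bounded below by the integrable variable $f(t_1)$.
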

}

{Using these results, we are now ready to prove bounds on moments of $Z_{N, \mathcal G}^\mathrm{sample}$ and $Z_{N, \Phi}^\mathrm{sample}$, similar to those proved in {Lemma} \ref{thm:bound_zmean}.
The reader interested purely in approximation results for the posterior
can simply read the statements of the following two lemmas, and then
proceed directly to subsections \ref{ssec:gp_app_marg} and \ref{ssec:gp_app_rand}.}

{Recall that, as in \eqref{eq:gp}, $\Phi_0$ and $\mathcal G^j_0$ denote the initial Gaussian process models for $\Phi$ and $\mathcal G^j$, respectively, and, as in \eqref{eq:gp_pred}, $\Phi_N$ and $\mathcal G^j_N$ denote the conditioned Gaussian process models for $\Phi$ and $\mathcal G^j$, respectively.}

\begin{lemma}\label{thm:bound_zsample} {Let $X \subseteq \R^K$ be compact.} Suppose $\sup_{u \in X} \left\| \mathcal G(u) -  m^\mathcal G _N(u) \right\|$, $\sup_{u \in X} \left| \Phi(u) -  m^\Phi _N(u) \right|$ {and $\sup_{u \in X} k_N(u,u)$} converge to 0 as $N$ tends to infinity, and assume $\sup_{u \in X}\|\mathcal G(u)\| \leq C_\mathcal G < \infty$. {Suppose the assumptions of the Sudakov-Fernique inequality hold, for $g=\Phi_0$ and $f=\Phi_N - m_N^{\Phi}$, and for $g=\mathcal G^j_0$ and $f=\mathcal G^j_N - m_N^{\mathcal G^j}$, for $j \in \{1,\dots,J\}.$} Then, for any $1 \leq p < \infty$, there exist positive constants $C_1$ and $C_2$, independent of $U$ and $N$, such that {for all $N$ sufficiently large}
\[
C_1^{-1} \leq \EE_{\nu_N^\mathcal G} \big((Z_{N,\mathcal G}^\mathrm{sample})^p\big) \leq 1, \qquad \text{and} \qquad 1 \leq \EE_{\nu_N^\mathcal G} \big((Z_{N,\mathcal G}^\mathrm{sample})^{-p}\big) \leq C_1.
\]
and
\[
C_2^{-1} \leq \EE_{\nu_N^\Phi} \big((Z_{N,\Phi}^\mathrm{sample})^p\big) \leq C_2, \qquad \text{and} \qquad  C_2^{-1} \leq  \EE_{\nu_N^\Phi} \big((Z_{N,\Phi}^\mathrm{sample})^{-p}\big) \leq C_2.
\]
\end{lemma}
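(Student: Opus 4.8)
The plan is to reduce all eight moment bounds to uniform-in-$N$ control of the suprema of the centred Gaussian fields $W_N^j := \mathcal{G}_N^j - m_N^{\mathcal{G}^j}$ and $W_N^\Phi := \Phi_N - m_N^{\Phi}$, each of which has covariance $k_N$ and pointwise variance $k_N(u,u)$. Two of the $\mathcal{G}$-bounds are immediate: since the potential $\tfrac{1}{2\sigma_\eta^2}\|y - \mathcal{G}_N(u)\|^2$ is pointwise non-negative, every realisation gives $Z_{N,\mathcal{G}}^{\mathrm{sample}} \leq 1$, so $\EE_{\nu_N^{\mathcal{G}}}((Z_{N,\mathcal{G}}^{\mathrm{sample}})^p) \leq 1$ and $\EE_{\nu_N^{\mathcal{G}}}((Z_{N,\mathcal{G}}^{\mathrm{sample}})^{-p}) \geq 1$. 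For the remaining bounds I would use that $\mu_0$ is a probability measure to obtain the pointwise-in-realisation estimates $Z_{N,\mathcal{G}}^{\mathrm{sample}} \geq \exp(-\tfrac{1}{2\sigma_\eta^2}\sup_{u}\|y - \mathcal{G}_N(u)\|^2)$ and $\exp(-\sup_u \Phi_N(u)) \leq Z_{N,\Phi}^{\mathrm{sample}} \leq \exp(\sup_u(-\Phi_N(u)))$, then split $\mathcal{G}_N = m_N^{\mathcal{G}} + W_N$ and $\Phi_N = m_N^{\Phi} + W_N^\Phi$. The predictive means are bounded uniformly in $N$ by the triangle inequality, using $\sup_u\|\mathcal{G}(u)\| \leq C_{\mathcal{G}}$, the assumed convergence $\sup_u\|\mathcal{G} - m_N^{\mathcal{G}}\| \to 0$, and the fact that convergent sequences are bounded, exactly as in Lemma \ref{thm:bound_zmean}. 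This reduces every remaining bound to controlling polynomial or exponential moments of $\sup_u |W_N^j(u)|$ and $\sup_u |W_N^\Phi(u)|$.

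For the lower bounds on $\EE((Z^{\mathrm{sample}})^{\pm p})$ I would apply Jensen's inequality to move the expectation inside the exponential, reducing them to bounding $\EE_{\nu_N}(\sup_u W_N^j(u))$ and, in the $\mathcal{G}$ case, the second moment $\EE_{\nu_N^{\mathcal{G}}}(\sup_u |W_N^j(u)|^2)$, from above and uniformly in $N$. The expected supremum is controlled by the Sudakov-Fernique inequality (Proposition \ref{prop:sud_fern}) in precisely the form assumed in the hypotheses: comparing the centred field $f = W_N^j$ with the centred prior field $g = \mathcal{G}_0^j$ (and $f = W_N^\Phi$ with $g = \Phi_0$) gives $\EE(\sup_u W_N^j) \leq \EE(\sup_u \mathcal{G}_0^j)$, which is finite by Borell-TIS (Proposition \ref{prop:borell_tis}) and independent of $N$ because the comparison field is the $N$-independent prior process. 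The higher moments of the suprema then follow from this uniform mean bound together with the Borell-TIS concentration inequality. For the upper bounds on $\EE((Z^{\mathrm{sample}})^{\pm p})$, which lack sign-definiteness of the potential, I would instead need the exponential-quadratic moments $\EE(\exp(\lambda \sup_u |W_N^j(u)|^2))$, which I would obtain by viewing each $W_N^j$ as a centred Gaussian measure on $C(X)$ equipped with the supremum norm and invoking Fernique's theorem (Proposition \ref{prop:fernique}).

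The main obstacle is making Fernique's bound uniform in $N$, i.e. verifying its hypothesis $\log\big((1-\nu(\|f\| \leq r))/\nu(\|f\| \leq r)\big) \leq -1 - 32\lambda r^2$ with a single pair $(\lambda, r)$ for all large $N$. This is where the assumption $\sup_u k_N(u,u) \to 0$ is essential: for fixed $r$ the small-ball probability $\nu_N(\|W_N^j\|_{C(X)} \leq r)$ tends to $1$, which I would quantify by combining the Borell-TIS tail bound $\mathbb{P}(\sup_u W_N^j - \EE\sup_u W_N^j > r') \leq \exp(-(r')^2/(2\sigma_{N,j}^2))$, the uniform Sudakov-Fernique bound on $\EE\sup_u W_N^j$, and the vanishing variance $\sigma_{N,j}^2 = \sup_u k_N(u,u) \to 0$. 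Hence for $N$ large the Fernique hypothesis holds with fixed $(\lambda, r)$, yielding an $N$-uniform exponential moment and, a fortiori, uniform polynomial moments of the suprema; this is precisely the source of the ``$N$ sufficiently large'' qualifier in the statement. Assembling these estimates with the reductions above, and repeating the symmetric argument for $-W_N^\Phi$ (which has the same law as $W_N^\Phi$) to handle the $\inf_u \Phi_N$ terms, gives the four nontrivial $\mathcal{G}$-bounds and all four $\Phi$-bounds.
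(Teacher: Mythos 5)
Your proposal is correct and follows essentially the same route as the paper's proof: the trivial bounds from non-negativity of the $\mathcal G$-potential, the pointwise-in-realisation reduction to suprema of the centred fields $\mathcal G_N^j - m_N^{\mathcal G^j}$ and $\Phi_N - m_N^\Phi$, and, crucially, the $N$-uniform application of Fernique's Theorem justified by combining the Borell-TIS tail bound, the Sudakov-Fernique comparison with the $N$-independent prior fields, and the assumption $\sup_{u \in X} k_N(u,u) \to 0$, which is exactly the paper's argument (including the origin of the ``$N$ sufficiently large'' qualifier). The only deviation is your treatment of the lower bounds, via Jensen's inequality inside the exponential plus second-moment bounds on the suprema, whereas the paper obtains them for free from the corresponding upper bounds via the Jensen duality $\EE\big(Z^{p}\big) \geq \big(\EE\big(Z^{-p}\big)\big)^{-1}$; your version also works, just with slightly more bookkeeping.
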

\begin{proof}
We start with $Z_{N, \mathcal G}^\mathrm{sample}$. Since the potential $\frac{1}{2 \sigma_\eta^2} \left\| y -  \mathcal G_N (u) \right\|^2$ is non-negative and $\int_X \mu_0(\mathrm d u) = 1 = \int_{C^0(X; \R^J)} \nu^\mathcal G_N (\rm{d} \mathcal G_N)$, we have for any $1 \leq p < \infty$,
\[
\EE_{\nu_N^\mathcal G}((Z_{N, \mathcal G}^\mathrm{sample})^p) = \int_{C^0(X; \R^J)} \left(\int_X \exp\big(- \frac{1}{2 \sigma_\eta^2} \left\| y -  \mathcal G_N (u \right\|^2\big) \mu_0(\rm{d} u) \right)^p \nu^\mathcal G_N (\rm{d} \mathcal G_N) \leq 1.
\]
From Jensen's inequality, it then follows that
\[
\EE_{\nu_N^\mathcal G}((Z_{N, \mathcal G}^\mathrm{sample})^{-p}\big) \geq \big(\EE_{\nu_N^\mathcal G}((Z_{N, \mathcal G}^\mathrm{sample})^p)\big)^{-1} \geq 1.
\]
To determine $C_1$, we use the triangle inequality to bound, for any $1 \leq p < \infty$,
\begin{align*}
&\EE_{\nu_N^\mathcal G}\big((Z_{N,\mathcal G}^\mathrm{sample})^{-p}\big) = \int_{C^0(X; \R^J)} \left(\int_X \exp\big(- \frac{1}{2 \sigma_\eta^2} \left\| y -  \mathcal G_N(u) \right\|^2\big) \mu_0(\rm{d} u)\right)^{-p} \nu_N^\mathcal{G} (\rm{d} \mathcal G_N) \\
&\leq \int_{C^0(X; \R^J)} \big(\exp\big(-\frac{1}{2 \sigma_\eta^2} \sup_{u \in X} \left\| y -  \mathcal G_N(u) \right\|^2\big)\big)^{-p} \nu_N^\mathcal G (\rm{d} \mathcal G_N) \\
&= \int_{C^0(X; \R^J)} \exp\big(\frac{p}{2 \sigma_\eta^2} \sup_{u \in X} \left\| y -  \mathcal G_N(u) \right\|^2\big) \nu_N^\mathcal G (\rm{d} \mathcal G_N) \\
&\leq \exp\left(\frac{\sup_{u \in X} \|y - m_N^\mathcal G(u)\|^2}{2 p^{-1} \sigma_\eta^2} \right) \int_{C^0(X; \R^J)} \exp\left(\frac{\sup_{u \in X} \|\mathcal G_N(u) - m_N^\mathcal G(u)\|^2}{2 p^{-1} \sigma_\eta^2} \right) \nu^\mathcal G_N (\rm{d} \mathcal G_N).
\end{align*}
The first factor {can} be bounded independently of $U$ and $N$ using the triangle inequality, together with $\sup_{u \in X}\|\mathcal G(u)\| \leq C_\mathcal G$ and $\sup_{u \in X} \left\| \mathcal G(u) -  m^\mathcal G _N(u) \right\| \rightarrow 0$ as $N \rightarrow \infty$. For the second factor, we use Fernique's Theorem (Proposition \ref{prop:fernique}). First, we note that {(using independence)}
\begin{align*}
&\int_{C^0(X; \R^J)} \exp\left(\frac{\sup_{u \in X} \|\mathcal G_N(u) - m_N^\mathcal G(u)\|^2}{2 p^{-1} \sigma_\eta^2} \right) \nu^\mathcal G_N (\rm{d} \mathcal G_N) \\
&= \int_{C^0(X; \R^J)} \exp\left(\frac{\sup_{u \in X} \sum_{j=1}^J |{\mathcal G^j}_N(u) - m_N^{\mathcal G^j}(u)|^2}{2 p^{-1} \sigma_\eta^2} \right) \nu^\mathcal G_N (\rm{d} \mathcal G_N) \\
&\leq \int_{C^0(X; \R^J)} \exp\left(\sum_{j=1}^J  \frac{\sup_{u \in X} |{\mathcal G^j}_N(u) - m_N^{\mathcal G^j}(u)|^2}{2 p^{-1} \sigma_\eta^2} \right) \nu^\mathcal G_N (\rm{d} \mathcal G_N) \\
&= \int_{C^0(X; \R^J)} \prod_{j=1}^J \exp\left( \frac{\sup_{u \in X} |{\mathcal G^j}_N(u) - m_N^{\mathcal G^j}(u)|^2}{2 p^{-1} \sigma_\eta^2} \right) \nu^\mathcal G_N (\rm{d} \mathcal G_N) \\
&= \prod_{j=1}^J \int_{C^0(X)} \exp\left( \frac{\sup_{u \in X} |{\mathcal G^j}_N(u) - m_N^{\mathcal G^j}(u)|^2}{2 p^{-1} \sigma_\eta^2} \right) \nu^{\mathcal G^j}_N (\rm{d} {\mathcal G^j_N}).
\end{align*}
{It remains to show that, for $N$ sufficiently large, the assumptions of Fernique's Theorem hold for $\lambda = p \sigma_\eta^{-2}/2$ and a value of $r$ independent of $U$ and $N$, for $\nu$ equal to the push-forward of $\nu_N^{\mathcal G^j}$ under the map $T(f) = f - m_N^{\mathcal G^j}$.
Denote by $B^{\mathcal G^j}_{N,r} \subset C^0(X)$ the set of all functions $f$ such that $\|f - m^{\mathcal G^j}_N\|_{C^0(X)} \leq r$, for some fixed $r > 0$ and $1 \leq j \leq J$. Let $\overline{\mathcal G^j_N} = \mathcal G^j_N - m^{\mathcal G^j}_N$. By the Borell-TIS Inequality, we have for all $r > \EE(\sup_{u \in X} (\overline{\mathcal G^j_N}(u) )$,
\begin{equation*}
\nu_N^{\mathcal G^j}(\mathcal G^j_N : \sup_{u \in X} \overline{\mathcal G^j_N}(u) > r) \leq \exp\left(-\frac{\big(r - \EE(\sup_{u \in X} \overline{\mathcal G^j_N}(u)\big)^2}{2 \sigma_{N}^2} \right),
\end{equation*}
where $\sigma_{N}^2 := \sup_{u \in X} k_N(u,u)$. By assumption,  $\EE_{\nu_N^j}((\overline{\mathcal G^j_N}(u) - \overline{\mathcal G^j_N}(u'))^2) \leq \EE_{\nu_0^j}((\mathcal G^j_0(u) - \mathcal G^j_0(u'))^2)$, and so $\EE(\sup_{u \in X} (\overline{\mathcal G^j_N}(u) ) \leq \EE(\sup_{u \in X} (\mathcal G^j_0(u) )$, by the Sudakov-Fernique Inequality. We can hence choose $r > \EE(\sup_{u \in X} (\mathcal G^j_0(u) )$, independent of $U$ and $N$, such that the bound 
\begin{equation*}
\nu_N^{\mathcal G^j}(\mathcal G^j_N : \sup_{u \in X} \overline{\mathcal G^j_N}(u) > r) \leq \exp\left(-\frac{\big(r - \EE(\sup_{u \in X} \mathcal G^j_0(u)\big)^2}{2 \sigma_{N}^2} \right),
\end{equation*}
holds for all $N \in \N$.
By assumption we have $\sigma_{N}^2 \rightarrow 0$ as $N \rightarrow \infty$, and by the symmetry of Gaussian measures, we hence have $\nu_N^{\mathcal G^j}(B^{\mathcal G^j}_{N,r}) \rightarrow 1$ as $N \rightarrow \infty$, for all $r > \EE(\sup_{u \in X} (\mathcal G^j_0(u) )$. For $N = N(p)$ sufficiently large, the inequality
\[
\log \left( \frac{1- \nu_N^{\mathcal G^j}(B^{\mathcal G^j}_{N,r})}{\nu_N^{\mathcal G^j}(B^{\mathcal G^j}_{N,r})}  \right) \leq -1 -32 \lambda r^2,
\] 
in the assumptions of Fernique's Theorem is then satisfied, for $\lambda = p \sigma_\eta^{-2}/2$ and $r > \EE(\sup_{u \in X} (\mathcal G^j_0(u) )$, both independent of $U$ and $N$. Hence, $\EE_{}\big((Z_{N, \mathcal G}^\mathrm{sample})^{-p}\big) \leq C_1(p)$, for a constant $C_1(p) < \infty$ independent of $U$ and $N$.}
From Jensen's inequality, it then finally follows that
\[
\EE_{\nu_N^\mathcal G}((Z_{N, \mathcal G}^\mathrm{sample})^{p}\big) \geq \big(\EE_{\nu_N^\mathcal G}((Z_{N, \mathcal G}^\mathrm{sample})^{-p})\big)^{-1} \geq C_1^{-1}(p).
\]

The proof for $Z_{N, \Phi}^\mathrm{sample}$ is similar.  
Using $\int_X \mu_0(\mathrm d u) = 1$ and the triangle inequality, we have
\begin{align*}
\EE_{\nu_N^\Phi}\big((Z_{N,\Phi}^\mathrm{sample})^p\big) &= \int_{C^0(X)} \left(\int_X \exp\big(- \Phi_N(u)\big) \mu_0(\rm{d} u) \right)^p \nu^\Phi_N (\rm{d} \Phi_N) \\
&\leq \int_{C^0(X)} \exp\big( p \sup_{u \in X} |\Phi_N(u)| \big) \nu^\Phi_N (\rm{d} \Phi_N) \\
&\leq \exp\big( p \sup_{u \in X} |m_N^\Phi(u)| \big) \int_{C^0(X)} \exp\big( p \sup_{u \in X} |\Phi_N(u) - m_N^\Phi(u)| \big) \nu^\Phi_N (\rm{d} \Phi_N).
\end{align*}
The first factor can be bounded independently of $U$ and $N$, since $\sup_{u \in X}\|\mathcal G(u)\| \leq C_\mathcal G$ and $\sup_{u \in X} \left| \Phi(u) -  m^\Phi _N(u) \right|$ converges to 0 as $N \rightarrow \infty$. {The second factor can be bounded by Fernique's Theorem. Using the same proof technique as above, we can show that $\nu^\Phi_N(B^\Phi_{N,r}) \rightarrow 1$ as $N \rightarrow \infty$ for all $r > \EE(\sup_{u \in X} (\Phi_0(u) )$, where $B^\Phi_{N,r} \subset C^0(X)$ denotes the set of all functions $f$ such that $\|f - m^\Phi_N\|_{C^0(X)} \leq r$.
Hence, it is possible to choose $r >0$, independent of $U$ and $N$, such that the assumptions of Fernique's Theorem hold for $\nu$ equal to the push-forward of $\nu_N^\Phi$ under the map $T(f) = f - m_N^\Phi$, for some $\lambda > 0$ also independent of $U$ and $N$. By Young's inequality, we have
\[
\exp\big( p \sup_{u \in X} |\Phi_N(u) - m_N^\Phi(u)| \big) \leq  \exp\big( \lambda \sup_{u \in X} |\Phi_N(u) - m_N^\Phi(u)|^2 + p^2/4 \lambda \big), 
\] 
and it follows that $\EE_\omega\big((Z_{N,\Phi}^\mathrm{sample})^p\big) \leq C_2(p)$, for a constant $C_2(p) < \infty$ independent of $U$ and $N$, for any $1 \leq p < \infty$.}
Furthermore, we note
\begin{align*}
\EE_{\nu_N^\Phi}\big((Z_{N,\Phi}^\mathrm{sample})^{-p}\big) \leq \int_{C^0(X; \R)} \exp\big(p \sup_{u \in X} |\Phi_N(u)|\big) \nu_N^\Phi (\rm{d} \Phi_N) \leq C_2(p).
\end{align*}
By Jensen's inequality, we finally have $\EE_{\nu_N^\Phi}\big((Z_{N,\Phi}^\mathrm{sample})^{-p}\big) \geq C_2(p)^{-1} $ and $\EE_{\nu_N^\Phi} \big((Z_{N,\Phi}^\mathrm{sample})^p\big) \geq C_2(p)^{-1}$.
\end{proof}

{ We would like to point out here that the assumption that $\sup_{u \in X} k_N(u,u)$ converges to 0 as $N$ tends to infinity in Lemma \ref{thm:bound_zsample} is crucial in order to enable the choice of any $1 \leq p < \infty$. This is related to the fact that the parameter $\lambda$ needs to be sufficiently small compared to $\sup_{u \in X} k_N(u,u)$ in order to satisfy the assumptions of Fernique's Theorem.}

{In {Lemma} \ref{thm:bound_zsample}, we supposed that the assumptions of the Sudakov-Fernique inequality hold, for $g=\Phi_0$ and $f=\Phi_N - m_N^{\Phi}$, and for $g=\mathcal G^j_0$ and $f=\mathcal G^j_N - m_N^{\mathcal G^j}$, for $j \in \{1,\dots,J\}$. This is an assumption on the predictive variance $k_N$. In the following Lemma, we prove this assumption for the predictive variance given in \eqref{eq:pred_eq}.}

{\begin{lemma}\label{lem:predvar_lip} Suppose the predictive variance $k_N$ is given by \eqref{eq:pred_eq}. Then the assumptions of the Sudakov-Fernique inequality hold, for $g=\Phi_0$ and $f=\Phi_N - m_N^{\Phi}$, and for $g=\mathcal G^j_0$ and $f=\mathcal G^j_N - m_N^{\mathcal G^j}$, for $j \in \{1,\dots,J\}$.
\end{lemma}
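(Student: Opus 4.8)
The plan is to verify, for each of the two field pairs named in the statement, the three hypotheses of the Sudakov-Fernique inequality (Proposition \ref{prop:sud_fern}): equality of means, domination of increment variances, and almost sure boundedness of both fields. Since we work with the zero-mean prior $m \equiv 0$, the field $g = \Phi_0$ is centred with covariance $k$, while $f = \Phi_N - m_N^\Phi$ is centred with covariance $k_N$; hence $\EE(f(u)) = 0 = \EE(g(u))$ for all $u \in X$, and the mean condition holds trivially. Exactly the same holds for each pair $g = \mathcal G^j_0$, $f = \mathcal G^j_N - m_N^{\mathcal G^j}$, so it suffices to treat a single generic pair with covariances $k_N$ and $k$.

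The core of the argument is the increment-variance inequality. I would write the canonical increment variances in terms of the covariances,
\begin{align*}
\EE\big((f(u) - f(u'))^2\big) &= k_N(u,u) - 2 k_N(u,u') + k_N(u',u'), \\
\EE\big((g(u) - g(u'))^2\big) &= k(u,u) - 2 k(u,u') + k(u',u'),
\end{align*}
so that the required bound $\EE\big((f(u)-f(u'))^2\big) \le \EE\big((g(u)-g(u'))^2\big)$ becomes, after subtracting and using the subtraction structure of the predictive covariance in \eqref{eq:pred_eq},
\[
A(u,u) - 2 A(u,u') + A(u',u') \ge 0, \qquad A(u,u') := k(u,U)^T K(U,U)^{-1} k(u',U).
\]
Since the design points are distinct and $k$ is positive definite, $K(U,U)$ and therefore $K(U,U)^{-1}$ are positive definite, so $A$ is itself a positive semi-definite kernel. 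The left-hand side above is precisely the quadratic form of the $2\times 2$ Gram matrix of $A$ at the vector $(1,-1)$, and is therefore non-negative. This establishes the increment condition.

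It then remains to check almost sure boundedness of both fields on the compact set $X$. For $g = \Phi_0$ (and $g = \mathcal G^j_0$) the Mat\`ern covariance $k$ is continuous, which yields almost surely continuous, hence bounded, sample paths on the compact set $X$. For $f = \Phi_N - m_N^\Phi$ I would exploit the increment bound just proved: the canonical pseudo-metric of $f$ is dominated pointwise by that of $g$, so any entropy (Dudley) bound controlling the sample-path regularity of $g$ transfers directly to $f$, giving almost sure boundedness of $f$ as well. The same reasoning applies verbatim to each $\mathcal G^j$ pair, completing the verification.

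The lemma is essentially routine, and the only step requiring genuine care is the almost sure boundedness hypothesis of Proposition \ref{prop:sud_fern}; this is handled cheaply by combining the continuity of the Mat\`ern kernel for the prior fields with the increment comparison itself for the conditioned fields. The substantive content, the reduction of the increment-variance condition to the positive semi-definiteness of $A$, follows immediately once the decomposition $k_N = k - A$ from \eqref{eq:pred_eq} is made explicit.
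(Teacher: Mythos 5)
Your proof is correct and follows essentially the same route as the paper's: both check the zero-mean condition and then reduce the increment-variance comparison, via the decomposition $k_N = k - A$ with $A(u,u') = k(u,U)^T K(U,U)^{-1} k(u',U)$ from \eqref{eq:pred_eq}, to the non-negativity of $\big(k(u,U)-k(u',U)\big)^T K(U,U)^{-1}\big(k(u,U)-k(u',U)\big)$, which is exactly your Gram-matrix statement for the kernel $A$. The only difference is that you additionally verify the almost-sure boundedness hypothesis of Proposition \ref{prop:sud_fern}, which the paper leaves implicit; this is a worthwhile addition, though note that almost-sure path continuity of the prior fields follows from the H\"older continuity of the Mat\`ern kernel (via a Kolmogorov or Dudley entropy argument), not from mere continuity of $k$, which in general is insufficient.
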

\begin{proof}
We give a proof for $g=\Phi_0$ and $f=\Phi_N - m_N^{\Phi}$, the proof for $g=\mathcal G^j_0$ and $f=\mathcal G^j_N - m_N^{\mathcal G^j}$, for $j \in \{1,\dots,J\}$, is identical. 
For any $u, u' \in X$, we have $\EE_{\nu^{\Phi}_0} (\Phi_0(u)) = 0 = \EE_{\nu^{\Phi}_N}  (\Phi_N(u) - m_N^{\Phi}(u))$, and  
\begin{align*}
\EE_{\nu^{\Phi}_N} \left( ((\Phi_N(u) - m^{\Phi} _N(u)) - (\Phi_N(u') - m^{\Phi} _N(u')))^2\right) &= k_N(u,u) - k_N(u,u') - k_N(u',u) + k_N(u',u'), \\
\EE_{\nu^{\Phi}_0} \left( (\Phi_0(u)  - \Phi_0(u'))^2\right) &= k(u,u) - k(u,u') - k(u',u) + k(u',u').
\end{align*}
By \eqref{eq:pred_eq}, we have
\[
k_N(u,u') = k(u,u') - k(u,U)^T \; K(U,U)^{-1} \; k(u',U),
\]
and so
\begin{align*}
&\EE_{\nu^{\Phi}_0} \left( (\Phi_0(u)  - \Phi_0(u'))^2\right) - \EE_{\nu^{\Phi}_N} \left( ((\Phi_N(u) - m^{\Phi} _N(u)) - (\Phi_N(u') - m^{\Phi} _N(u')))^2\right) \\
&= \big( k(u,U)^T - k(u',U)^T \big)\; K(U,U)^{-1} \; \big(k(u,U) - k(u',U)\big) \\
&\geq 0,
\end{align*}
since the matrix $K(U,U)^{-1}$ is positive definite.
\end{proof}}

We are now ready to prove bounds on the approximation error in the posterior distributions.

\subsubsection{Error in the marginal approximations $\mu^{y,N,\mathcal G}_\mathrm{marginal}$ and $\mu^{y,N,\Phi}_\mathrm{marginal}$}\label{ssec:gp_app_marg}

We start by analysing the error in the marginal approximations $\mu^{y,N,\mathcal G}_\mathrm{marginal}$ and $\mu^{y,N,\Phi}_\mathrm{marginal}$.

\begin{theorem}\label{thm:hell_marginal} Under the assumptions of {Lemma} \ref{thm:bound_zsample}, there exist constants $C_1$ and $C_2$, independent of $U$ and $N$, such that for any $\delta > 0$,
\begin{align*}
\dhh(\mu^y, \mu^{y,N,\mathcal G}_\mathrm{marginal}) &\leq C_1 \left\|\Big(\EE_{\nu_N^\mathcal G} \Big(\|\mathcal G  - \mathcal G_N \|^{1 + \delta} \Big)\Big)^{1/(1+\delta)}\right\|_{L^2_{\mu_0}(X)},  \\
\dhh(\mu^y, \mu^{y,N,\Phi}_\mathrm{marginal}) &\leq C_2 \left\|\EE_{\nu_N^\Phi} \left( |\Phi - \Phi_N| ^{1 + \delta} \right)^{1/(1+\delta)} \right\|_{L^2_{\mu_0}(X)}.
\end{align*}
\end{theorem}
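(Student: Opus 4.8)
The plan is to mirror the proof of Theorem \ref{thm:hell_mean}, replacing the deterministic predictive means by the $\nu_N$-averaged likelihoods and carrying the extra emulator expectation through the estimates. Writing $a(u)=\exp(-\Phi(u))$ and $b(u)=\EE_{\nu_N^\Phi}(\exp(-\Phi_N(u)))$ in the $\Phi$ case (with the obvious analogues built from $\frac{1}{2\sigma_\eta^2}\|y-\mathcal G(u)\|^2$ in the $\mathcal G$ case), I would first record that $\frac{d\mu^y}{d\mu_0}=a/Z$ and $\frac{d\mu^{y,N,\Phi}_\mathrm{marginal}}{d\mu_0}=b/\bar Z_N$, where $\bar Z_N:=\EE_{\nu_N^\Phi}(Z^\mathrm{sample}_{N,\Phi})=\int_X b\,\mu_0(\mathrm{d}u)$. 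Adding and subtracting the cross term splits $2\dhh^2\le I+II$ with $I=\frac{2}{Z}\int_X(\sqrt a-\sqrt b)^2\mu_0(\mathrm{d}u)$ and $II=2\bar Z_N(Z^{-1/2}-\bar Z_N^{-1/2})^2$, exactly as before. By Lemma \ref{thm:bound_zsample} with $p=1$, $\bar Z_N$ is bounded above and below independently of $U,N$, so the mean value theorem applied to $x\mapsto x^{-1/2}$ together with Cauchy--Schwarz reduces $II$ to a constant times $\int_X|a-b|^2\mu_0(\mathrm{d}u)$, i.e.\ to the same quantity that controls $I$.

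The heart of the argument is a pointwise estimate of $(\sqrt a-\sqrt b)^2$ and $|a-b|^2$. Since $\Phi$ (resp.\ $\frac{1}{2\sigma_\eta^2}\|y-\mathcal G\|^2$) is uniformly bounded, $a$ is bounded below by a positive constant, so $(\sqrt a-\sqrt b)^2=(a-b)^2/(\sqrt a+\sqrt b)^2\le a^{-1}(a-b)^2\le C(a-b)^2$; both $I$ and $II$ are therefore governed by $\int_X(a-b)^2\mu_0(\mathrm{d}u)$. Writing $a(u)=\EE_{\nu_N^\Phi}(\exp(-\Phi(u)))$ (the integrand being constant in the emulator) and using Jensen's inequality gives $|a(u)-b(u)|\le\EE_{\nu_N^\Phi}|\exp(-\Phi(u))-\exp(-\Phi_N(u))|$, after which the local Lipschitz continuity of $t\mapsto e^{-t}$ yields $|\exp(-\Phi)-\exp(-\Phi_N)|\le\max(e^{-\Phi},e^{-\Phi_N})|\Phi-\Phi_N|$. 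For $\mathcal G$ one first factors $\|y-\mathcal G\|^2-\|y-\mathcal G_N\|^2=(\|y-\mathcal G\|+\|y-\mathcal G_N\|)(\|y-\mathcal G\|-\|y-\mathcal G_N\|)$ and bounds the last factor by $\|\mathcal G-\mathcal G_N\|$ via the reverse triangle inequality, as in Theorem \ref{thm:hell_mean}.

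The main obstacle is that the multiplicative weight sitting next to $|\Phi-\Phi_N|$ (resp.\ $\|\mathcal G-\mathcal G_N\|$) inside the $\nu_N$-expectation is random and not uniformly bounded: $e^{-\Phi_N(u)}$ is the exponential of a Gaussian field and $\|y-\mathcal G_N(u)\|$ is unbounded. This is where the two cases diverge. For $\mathcal G$ I would separate the two random factors by Hölder's inequality with conjugate exponents $1+\delta$ and $(1+\delta)/\delta$, which produces the factor $(\EE_{\nu_N^\mathcal G}\|\mathcal G-\mathcal G_N\|^{1+\delta})^{1/(1+\delta)}$; the complementary factor, involving powers of $\|y-\mathcal G\|+\|y-\mathcal G_N\|$, is finite and bounded uniformly in $U,N$ because all Gaussian moments of $\mathcal G_N-m_N^\mathcal G$ are controlled by $\sup_u k_N(u,u)\to0$ (the Fernique input already exploited in Lemma \ref{thm:bound_zsample}). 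For $\Phi$ no extra $\delta$ is needed: splitting $\max(e^{-\Phi},e^{-\Phi_N})\le e^{-\Phi}+e^{-\Phi_N}$, the first piece is $\le1$ and yields $\EE_{\nu_N^\Phi}|\Phi-\Phi_N|$ at once, while $\EE_{\nu_N^\Phi}(e^{-\Phi_N}|\Phi-\Phi_N|)$ is handled by a Cameron--Martin tilt of the Gaussian $\Phi_N$; the tilt contributes an additive term of size $k_N(u,u)$, which for $N$ large is reabsorbed using the elementary pointwise lower bound $\EE_{\nu_N^\Phi}|\Phi(u)-\Phi_N(u)|\ge c\,k_N(u,u)^{1/2}$, leaving a constant multiple of $\EE_{\nu_N^\Phi}|\Phi-\Phi_N|$.

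Finally, inserting these pointwise bounds into $I$ and $II$ and integrating over $X$ against $\mu_0$ converts the squared emulator expectations into the squared $L^2_{\mu_0}$-norms on the right-hand sides, and taking square roots gives the two claimed Hellinger bounds. I expect the delicate step to be precisely the uniform-in-$N$ control of the random weights inside the emulator expectation — verifying that the complementary Hölder factor (for $\mathcal G$) and the tilted first moment (for $\Phi$) remain bounded as $N\to\infty$ — which is exactly where the vanishing of the predictive variance $\sup_u k_N(u,u)$ and the Gaussian-moment machinery underlying Lemma \ref{thm:bound_zsample} are indispensable.
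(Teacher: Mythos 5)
Your proof is correct, and its overall architecture coincides with the paper's: the same splitting $2\dhh^2 \le I + II$, the same reduction of $II$ to the quantity controlling $I$ via Lemma \ref{thm:bound_zsample} (with $p=1$) and Jensen's inequality, and, for the $\mathcal G$ case, exactly the paper's H\"older step with conjugate exponents $(1+\delta)/\delta$ and $1+\delta$ followed by Fernique/Gaussian-moment control of the complementary factor. Two local deviations are worth recording. First, you bound $(\sqrt a+\sqrt b)^{2}$ from below simply by $a$, which is bounded away from zero because $\Phi$ (resp.\ $\|y-\mathcal G\|$) is uniformly bounded; the paper instead uses $(\sqrt a+\sqrt b)^2\ge a+b$, convexity of $1/x$, Jensen's inequality and Fernique's Theorem to bound $\sup_u (a+b)^{-1}$. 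Your shortcut is valid and slightly simpler. Second, and more substantively: in the $\Phi$ case the paper passes from $\big|\EE_{\nu_N^\Phi}\big(e^{-\Phi}-e^{-\Phi_N}\big)\big|$ to a constant times $\EE_{\nu_N^\Phi}|\Phi-\Phi_N|$ by invoking ``local Lipschitz continuity of the exponential function'', which is not literally justified pointwise, since the weight $e^{-\Phi_N(u)}$ appearing in the mean-value bound is an unbounded random variable ($\Phi_N$ is Gaussian, hence unbounded below). You identify this obstacle explicitly and repair it: the Gaussian tilt gives $\EE\big(e^{-\Phi_N}|\Phi-\Phi_N|\big)=e^{-m_N^\Phi+k_N/2}\,\EE\big|\Phi-\Phi_N+k_N\big|$ with $k_N=k_N(u,u)$, the prefactor is uniformly bounded by the assumptions of Lemma \ref{thm:bound_zsample}, and the extra additive $k_N(u,u)$ is reabsorbed using the elementary lower bound $\EE|\Phi-\Phi_N|\ge \sqrt{2/\pi}\,k_N(u,u)^{1/2}$ together with $\sup_u k_N(u,u)\to 0$. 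This yields the stated bound (with a different constant) by a fully rigorous route, so on this one step your argument is genuinely different from, and tighter than, the published proof; the same repair could alternatively be phrased by computing $\EE_{\nu_N^\Phi}\big(e^{-\Phi_N(u)}\big)=e^{-m_N^\Phi(u)+k_N(u,u)/2}$ explicitly and comparing it with $e^{-\Phi(u)}$.
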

\begin{proof}
We start with $\mu^{y,N,\mathcal G}_\mathrm{marginal}$. By the definition of the Hellinger distance, we have
\begin{align*}
&2  \; \dhh^2(\mu^y, \mu^{y,N,\mathcal G}_\mathrm{marginal}) = \int_X \left( \sqrt{\frac{d\mu^y}{d\mu_0}} - \sqrt{\frac{d\mu^{y,N,\mathcal G}_\mathrm{marginal}}{d\mu_0}} \right)^2 \mu_0(\mathrm{d}u) \\
&\leq \frac{2}{Z} \int_X \left(\sqrt{\exp\big(-\frac{1}{2 \sigma_\eta^2} \left\| y -  \mathcal G (u) \right\|^2\big)} - \sqrt{\EE_{\nu_N^\mathcal G} \Big(\exp\big(-\frac{1}{2 \sigma_\eta^2} \left\| y -  \mathcal G_N (u) \right\|^2\big)\Big)}\right)^2 \mu_0(\mathrm{d}u) \\
& \qquad + 2 \EE_{\nu_N^\mathcal G} \big(Z_{N,\mathcal G}^\mathrm{sample}\big) \left(Z^{-1/2} - \EE_{\nu_N^\mathcal G} \big(Z_{N,\mathcal G}^\mathrm{sample}\big)^{-1/2} \right)^2 \\
&= I + II.
\end{align*}

For the first term, we use the (in)equalities $a-b=(a^2-b^2)/(a+b)$ and $(\sqrt{a}+\sqrt{b})^2 \geq a + b$, for $a,b>0$, to derive
\begin{align*}
\frac{Z}{2} I &= \int_X \left(\sqrt{\exp\big(-\frac{1}{2 \sigma_\eta^2} \left\| y -  \mathcal G (u) \right\|^2\big)} - \sqrt{\EE_{\nu_N^\mathcal G} \Big(\exp\big(-\frac{1}{2 \sigma_\eta^2} \left\| y -  \mathcal G_N (u) \right\|^2\big)\Big)}\right)^2 \mu_0(\mathrm{d}u) \\
&\leq \int_X \frac{ \left( \exp\big(-\frac{1}{2 \sigma_\eta^2} \left\| y -  \mathcal G (u) \right\|^2\big) - \EE_{\nu_N^\mathcal G} \Big(\exp\big(-\frac{1}{2 \sigma_\eta^2} \left\| y -  \mathcal G_N (u) \right\|^2\big)\Big) \right)^2 }{\exp\big(-\frac{1}{2 \sigma_\eta^2} \left\| y -  \mathcal G (u) \right\|^2\big) + \EE_{\nu_N^\mathcal G} \Big(\exp\big(-\frac{1}{2 \sigma_\eta^2} \left\| y -  \mathcal G_N (u) \right\|^2\big)\Big)} \mu_0(\mathrm{d}u) \\
&\leq \sup_{u \in X} \left(\exp\big(-\frac{1}{2 \sigma_\eta^2} \left\| y -  \mathcal G (u) \right\|^2\big) + \EE_{\nu_N^\mathcal G} \Big(\exp\big(-\frac{1}{2 \sigma_\eta^2} \left\| y -  \mathcal G_N (u) \right\|^2\big)\Big)\right)^{-1} \\
&\qquad \qquad \int_X \left( \exp\big(-\frac{1}{2 \sigma_\eta^2} \left\| y -  \mathcal G (u) \right\|^2\big) - \EE_{\nu_N^\mathcal G} \Big(\exp\big(-\frac{1}{2 \sigma_\eta^2} \left\| y -  \mathcal G_N (u) \right\|^2\big)\Big) \right)^2 \mu_0(\mathrm{d}u).
\end{align*}
For the first factor, using the convexity of $1/x$ on $(0,\infty)$, together with Jensen's inequality, we have for all $u \in X$ the bound
\begin{align*}
&\left(\exp\big(-\frac{1}{2 \sigma_\eta^2} \left\| y -  \mathcal G (u) \right\|^2\big) + \EE_{\nu_N^\mathcal G} \Big(\exp\big(-\frac{1}{2 \sigma_\eta^2} \left\| y -  \mathcal G_N (u) \right\|^2\big)\Big)\right)^{-1} \\
& \qquad \leq \exp\big(-\frac{1}{2 \sigma_\eta^2} \left\| y -  \mathcal G (u) \right\|^2\big)^{-1} + \EE_{\nu_N^\mathcal G} \Big(\exp\big(-\frac{1}{2 \sigma_\eta^2} \left\| y -  \mathcal G_N (u) \right\|^2\big)\Big)^{-1} \\
& \qquad \leq \exp\big(\frac{1}{2 \sigma_\eta^2} \left\| y -  \mathcal G (u) \right\|^2\big) + \EE_{\nu_N^\mathcal G} \Big(\exp\big(\frac{1}{2 \sigma_\eta^2} \left\| y -  \mathcal G_N (u) \right\|^2\big)\Big) \\
& \qquad \leq \exp\big(\frac{1}{2 \sigma_\eta^2} \sup_{u \in X} \left\| y -  \mathcal G (u) \right\|^2\big) + \EE_{\nu_N^\mathcal G} \Big(\exp\big(\frac{1}{2 \sigma_\eta^2} \sup_{u \in X} \left\| y -  \mathcal G_N (u) \right\|^2\big)\Big).
\end{align*}
As in the proof of {Lemma} \ref{thm:bound_zsample}, it then follows by Fernique's Theorem that the right hand side can be bounded by a constant independent of $U$ and $N$. 

For the second factor in the bound on $\frac{Z}{2} I$, the linearity of expectation, the local Lipschitz continuity of the exponential function, the equality $a^2-b^2 = (a-b)(a+b)$, the reverse triangle inequality and H\"older's inequality with conjugate exponents $p=(1+\delta)/\delta$ and $q=1+\delta$ give
\begin{align*}
&\int_X \left( \exp\big(-\frac{1}{2 \sigma_\eta^2} \left\| y -  \mathcal G (u) \right\|^2\big) - \EE_{\nu_N^\mathcal G} \Big(\exp\big(-\frac{1}{2 \sigma_\eta^2} \left\| y -  \mathcal G_N (u) \right\|^2\big)\Big) \right)^2 \mu_0(\mathrm{d}u)\\
&= \int_X \left( \EE_{\nu_N^\mathcal G} \Big( \exp\big(-\frac{1}{2 \sigma_\eta^2} \left\| y -  \mathcal G (u) \right\|^2\big) - \exp\big(-\frac{1}{2 \sigma_\eta^2} \left\| y -  \mathcal G_N (u) \right\|^2\big)\Big) \right)^2 \mu_0(\mathrm{d}u)\\
&\leq 2 \int_X \left( \EE_{\nu_N^\mathcal G} \Big( |\frac{1}{2 \sigma_\eta^2} \left\| y -  \mathcal G (u) \right\|^2 - \frac{1}{2 \sigma_\eta^2} \left\| y -  \mathcal G_N (u) \right\|^2|\Big) \right)^2 \mu_0(\mathrm{d}u)\\
&\leq \frac{2}{4 \sigma_\eta^4} \int_X  \left(\EE_{\nu_N^\mathcal G} \Big( \left(\left\| y -  \mathcal G (u) \right\| + \left\| y -  \mathcal G_N (u) \right\|\right) \|\mathcal G (u) - \mathcal G_N (u)\| \Big) \right)^2 \mu_0(\mathrm{d}u)\\
&\leq \frac{2}{4 \sigma_\eta^4} \int_X  \Big(\EE_{\nu_N^\mathcal G} \Big( \left(\left\| y -  \mathcal G (u) \right\| + \left\| y -  \mathcal G_N (u) \right\|\right)^{(1+\delta)/\delta} \Big) \Big)^{2 \delta / (1+ \delta)} \Big(\EE_{\nu_N^\mathcal G} \Big(\|\mathcal G (u) - \mathcal G_N (u)\|^{1 + \delta} \Big) \Big)^{2/(1+\delta)} \mu_0(\mathrm{d}u) \\
&\leq \frac{2}{4 \sigma_\eta^4} \sup_{u \in X} \Big(\EE_{\nu_N^\mathcal G} \Big( \left(\left\| y -  \mathcal G (u) \right\| + \left\| y -  \mathcal G_N (u) \right\|\right)^{(1+\delta)/\delta} \Big) \Big)^{2 \delta / (1+ \delta)} \int_X \Big( \EE_{\nu_N^\mathcal G} \Big(\|\mathcal G (u) - \mathcal G_N (u)\|^{1 + \delta} \Big) \Big)^{2/(1+\delta)} \mu_0(\mathrm{d}u),
\end{align*}
for any $\delta > 0$. The supremum in the above expression can be bounded by a constant independent of $U$ and $N$ by Fernique's Theorem as in the proof of 
{Lemma} \ref{thm:bound_zsample}, since $\sup_{u \in X}\|\mathcal G(u)\| \leq C_\mathcal G < \infty$. It follows that there exists a constant $C$ independent of $U$ and $N$ such that
\[
\frac{Z}{2} I \leq C \left\|\Big( \EE_{\nu_N^\mathcal G} \Big(\|\mathcal G_N - \mathcal G\|^{1 + \delta} \Big) \Big)^{1/(1+\delta)}\right\|^2_{L^2_{\mu_0}(X)}.
\]

For the second term in the bound on the Hellinger distance, we have
\begin{equation*}
\frac{1}{2 \EE_{\nu_N^\mathcal G}\big(Z_{N,\mathcal G}^\mathrm{sample}\big)} II =  \left(Z^{-1/2} - \big(\EE_{\nu_N^\mathcal G} \big(Z_{N,\mathcal G}^\mathrm{sample}\big)\big)^{-1/2} \right)^2  \leq \max(Z^{-3},(\EE_{\nu_N^\mathcal G} \big(Z_{N,\mathcal G}^\mathrm{sample}\big))^{-3}) |Z - \EE_{\nu_N^\mathcal G} \big(Z_{N,\mathcal G}^\mathrm{sample}\big)|^2.
\end{equation*}
Using the linearity of expectation, Tonelli's Theorem and Jensen's inequality, we have
\begin{align*}
&\left|Z - \EE_{\nu_N^\mathcal G} \big(Z_{N,\mathcal G}^\mathrm{sample}\big)\right|^2 \\
&= \left| \int_X  \EE_{\nu_N^\mathcal G} \Big( \exp\big(-\frac{1}{2 \sigma_\eta^2} \left\| y -  \mathcal G (u) \right\|^2\big) - \exp\big(-\frac{1}{2 \sigma_\eta^2} \left\| y -  \mathcal G_N (u) \right\|^2\big)\Big) \mu_0(\mathrm{d}u) \right|^2 \\
&\leq \int_X \left( \EE_{\nu_N^\mathcal G} \Big( \exp\big(-\frac{1}{2 \sigma_\eta^2} \left\| y -  \mathcal G (u) \right\|^2\big) - \exp\big(-\frac{1}{2 \sigma_\eta^2} \left\| y -  \mathcal G_N (u) \right\|^2\big)\Big) \right)^2 \mu_0(\mathrm{d}u).
\end{align*}
which can now be bounded as before. The first claim of the theorem now follows by {Lemma} \ref{thm:bound_zsample}.

The proof for $\mu^{y,N,\Phi}_\mathrm{marginal}$ is similar. We use an identical corresponding splitting of the Hellinger distance $\dhh(\mu^y, \mu^{y,N,\Phi}_\mathrm{marginal}) \leq I + II$. For the first term, we have
\begin{align*}
\frac{Z}{2} I &= \int_X \left(\sqrt{\exp\big(-\Phi(u)\big)} - \sqrt{\EE_{\nu_N^\Phi} \Big(\exp\big(-\Phi_N(u) \big)\Big)}\right)^2 \mu_0(\mathrm{d}u) \\
&\leq \sup_{u \in X} \left(\exp\big(-\Phi(u)\big) + \EE_{\nu_N^\Phi} \Big(\exp\big(-\Phi_N(u)\big)\Big)\right)^{-1} \\
&\qquad \qquad \int_X \left( \exp\big(-\Phi(u)\big) - \EE_{\nu_N^\Phi} \Big(\exp\big(-\Phi_N(u)\big)\Big) \right)^2 \mu_0(\mathrm{d}u).
\end{align*}
The first factor can again be bounded using Jensen's inequality,
\begin{align*}
\sup_{u \in X}  \left(\exp\big(-\Phi(u)\big) + \EE_{\nu_N^\Phi} \Big(\exp\big(-\Phi_N(u)\big)\Big)\right)^{-1} \leq \exp\big(\sup_{u \in X} \Phi(u) \big) + \EE_{\nu_N^\Phi} \Big(\exp\big( \sup_{u \in X} \Phi_N(u)\big)\Big),
\end{align*}
which as in the proof of {Lemma} \ref{thm:bound_zsample}, can be bounded by a constant independent of $U$ and $N$ by Fernique's Theorem. 

For the second factor in the bound on $\frac{Z}{2} I$, the linearity of expectation, H\"older's inequality with conjugate exponents $p=(1+\delta)/\delta$ and $q=1+\delta$ and the inequality $|\exp(x) - \exp(y)| \leq (\exp(x) + \exp(y)) |x-y|$, for all $x,y \in \R$, give
\begin{align*}
&\int_X \left( \exp\big(-\Phi(u)\big) - \EE_{\nu_N^\Phi} \Big(\exp\big(-\Phi_N(u)\big)\Big) \right)^2 \mu_0(\mathrm{d}u)\\
&= \int_X \left( \EE_{\nu_N^\Phi} \Big( \exp\big(-\Phi(u)\big) - \exp\big(-\Phi_N(u)\big)\Big) \right)^2 \mu_0(\mathrm{d}u)\\
&\leq \int_X \left( \EE_{\nu_N^\Phi} \Big( (\exp\big(-\Phi(u)\big) + \exp\big(-\Phi_N(u))\big)^{(1+\delta)/\delta} \Big)^{\delta/(1+\delta)} \EE_{\nu_N^\Phi} \Big( |\Phi(u) - \Phi_N(u)|^{1+\delta} \Big)^{1/(1+\delta)} \right)^2 \mu_0(\mathrm{d}u)\\
&\leq \sup_{u \in X} \EE_{\nu_N^\Phi} \Big( (\exp\big(-\Phi(u)\big) + \exp\big(-\Phi_N(u))\big)^{(1+\delta)/\delta} \Big)^{2\delta/(1+\delta)} \left\|\EE_{\nu_N^\Phi} \left( |\Phi(u) - \Phi_N(u)|^{1+\delta} \right)^{1/(1+\delta)} \right\|^2_{L^2_{\mu_0}(X)},
\end{align*}
where the first factor can be bounded independently of $U$ and $N$ as in Lemma \ref{thm:bound_zsample}.

For the second term in the bound on the Hellinger distance, the linearity of expectation, Tonelli's Theorem and Jensen's inequality give
\begin{align*}
\left|Z - \EE_{\nu_N^\Phi} \big(Z_{N,\Phi}^\mathrm{sample}\big)\right|^2 \leq \int_X \left( \EE_{\nu_N^\Phi} \Big( \exp\big(-\Phi(u)\big) - \exp\big(-\Phi_N(u)\big)\Big) \right)^2 \mu_0(\mathrm{d}u),
\end{align*}
which can now be bounded as before. The second claim of the theorem then follows by {Lemma} \ref{thm:bound_zsample}.
\end{proof}

Similar to Theorem \ref{thm:hell_mean}, Theorem \ref{thm:hell_marginal} provides error bounds for general Gaussian process emulators of $\mathcal G$ and $\Phi$. An example of a Gaussian process emulator that satisfies the assumptions of Theorem \ref{thm:hell_marginal} is the emulator defined by \eqref{eq:pred_eq}, however, other choices are possible. {As in Corollary \ref{cor:rate_mean}, we can now combine Assumption \ref{ass:reg}, Theorem \ref{thm:hell_marginal} and Proposition \ref{prop:mean_conv} with $\beta = 0$ to derive error bounds in terms of the fill distance.}

\begin{corollary}\label{cor:rate_marginal} Suppose $\mathcal G_N$ and $\Phi_N$ are defined as in \eqref{eq:pred_eq}, with Mat\`ern kernel $k=k_{\nu,\lambda,\sigma_k^2}$. Suppose { Assumption A holds}, Assumption \ref{ass:reg} { holds with $s=\nu + K/2$,} and the assumptions of Proposition \ref{prop:mean_conv} and Theorem \ref{thm:hell_marginal} are satisfied. Then there exist constants $C_1, C_2, C_3$ and $C_4$, independent of $U$ and $N$, such that
\begin{align*}
\dhh(\mu^y, \mu^{y,N,\mathcal G}_\mathrm{marginal}) \leq C_1 h_U^{\nu + K/2} + C_2 h_U^\nu, \qquad
\text{and} \quad \dhh(\mu^y, \mu^{y,N, \Phi}_\mathrm{marginal}) \leq C_3 h_U^{\nu + K/2} + C_4 h_U^\nu.
\end{align*}
\end{corollary}
\begin{proof}We give the proof for $\mu^{y,N,\mathcal G}_\mathrm{marginal}$, the proof for $\mu^{y,N,\Phi}_\mathrm{marginal}$ is similar. Using Theorem \ref{thm:hell_marginal}, Jensen's inequality and the triangle inequality, we have
\begin{align*}
{\dhh(\mu^y, \mu^{y,N,\mathcal G}_\mathrm{marginal})^{2}} &\leq C \left\|\Big(\EE_{\nu_N^\mathcal G} \Big(\|\mathcal G  - \mathcal G_N \|^{2} \Big) \Big)^{1/2}\right\|^2_{L^2_{\mu_0}(X)} \\
&= C \int_X \EE_{\nu_N^\mathcal G} \Big(\|\mathcal G(u)  - \mathcal G_N(u) \|^{2} \Big) \mu_0(\mathrm{d}u) \\
&\leq 2C \int_X \|\mathcal G(u)  - m_N^\mathcal G(u) \|^{2} \mu_0(\mathrm{d}u) + 2C \int_X \EE_{\nu_N^\mathcal G} \Big(\|m_N^\mathcal G(u)  - \mathcal G_N(u) \|^{2} \Big) \mu_0(\mathrm{d}u).
\end{align*}
The first term can be bounded by using {Assumption A}, Assumption \ref{ass:reg}, Proposition \ref{prop:native_matern} and Proposition \ref{prop:mean_conv},
\begin{align*}
\int_X \|\mathcal G(u)  - m_N^\mathcal G(u) \|^{2} \mu_0(\mathrm{d}u) = \int_X \sum_{j=1}^J (\mathcal G^j(u) - m_N^{\mathcal G^j}(u))^2 \mu_0(\mathrm{d}u) 
\leq C h_U^{2\nu + K} \sum_{j=1}^J \| \mathcal G^j\|^2_{H^{\nu + K/2}(X)},
\end{align*}
for a constant $C$ independent of $U$ and $N$.
The second term can be bounded by using Assumption \ref{ass:reg}, Proposition \ref{prop:native_matern}, Proposition \ref{prop:mean_conv}, Proposition \ref{prop:predvar_sup}, the linearity of expectation and the Sobolev Embedding Theorem 
\begin{align*}
\int_X \EE_{\nu_N^\mathcal G} \Big(\|m_N^\mathcal G(u)  - \mathcal G_N(u) \|^{2} \Big) \mu_0(\mathrm{d}u) &= \int_X \EE_{\nu_N^\mathcal G} \Big(\sum_{j=1}^J (m_N^{\mathcal G^j}(u) - \mathcal G_N^j(u))^2 \Big) \mu_0(\mathrm{d}u) \\
&= J \int_X k_N(u,u) \mu_0(\mathrm{d}u) \\
&\leq J \sup_{u \in X} \sup_{\|g\|_{H_k}=1} | g(u) - m^g_N(u)|^2 \\
&\leq C h_U^{2\nu},
\end{align*}
for a constant $C$ independent of $U$ and $N$. The claim of the corollary then follows.
\end{proof}

{If Assumption \ref{ass:reg} holds only for some $s < \nu + K/2$, an analogue of Corollary \ref{cor:rate_marginal} can be proved using Proposition \ref{prop:mean_conv_int} with $\beta = 0$.

Note that the term $h_U^\nu$ appearing in the bounds in Corollary \ref{cor:rate_marginal} corresponds to the error bound on $\|k_N^{1/2}\|_{L^2(X)}$, which does not appear in the error bounds for $\mu^{y,N,\mathcal G}_\mathrm{mean}$ and $\mu^{y,N,\Phi}_\mathrm{marginal}$ analysed in Corollary \ref{cor:rate_mean}. Due to the supremum over $g$ appearing in the expression for $k_N(u,u)$ in Proposition \ref{prop:predvar_sup}, we can only conclude on the lower rate of convergence $h_U^\nu$ for $\|k_N^{1/2}\|_{L^2(X)}$. This result appears to be sharp, and the lower rate of convergence $\nu$ is observed in some of the numerical experiments in section \ref{sec:num} (cf Figures \ref{fig:marg} and \ref{fig:rand}).}

\subsubsection{Error in the random approximations $\mu^{y,N,\mathcal G}_\mathrm{sample}$ and $\mu^{y,N,\Phi}_\mathrm{sample}$}\label{ssec:gp_app_rand}

We have the following result for the random approximations $\mu^{y,N,\mathcal G}_\mathrm{sample}$ and $\mu^{y,N,\Phi}_\mathrm{sample}$.

\begin{theorem}\label{thm:hell_sample} Under the Assumptions of {Lemma} \ref{thm:bound_zsample}, there exist constants $C_1$ and $C_2$, independent of $U$ and $N$, such that for any $\delta > 0$,
\begin{align*}
\left(\EE_{\nu_N^\mathcal G} \left(\dhh(\mu^y, \mu^{y,N,\mathcal G}_\mathrm{sample})^2\right) \right)^{1/2} &\leq C_1 \left\|\Big( \EE_{\nu_N^\mathcal G} \Big(\|\mathcal G  - \mathcal G_N \|^{2+ \delta} \Big) \Big)^{1/(2+\delta)}\right\|_{L^2_{\mu_0}(X)}, \\
\left(\EE_{\nu_N^\Phi} \left(\dhh(\mu^y, \mu^{y,N,\Phi}_\mathrm{sample})^2\right) \right)^{1/2} &\leq C_2 \left\|\Big( \EE_{\nu_N^\Phi} \Big(|\Phi - \Phi_N |^{2+\delta} \Big) \Big)^{1/(2+\delta)}\right\|_{L^2_{\mu_0}(X)}.
\end{align*}
\end{theorem}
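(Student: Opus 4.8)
The plan is to follow the same two-term splitting of the squared Hellinger distance used in the proofs of Theorems~\ref{thm:hell_mean} and~\ref{thm:hell_marginal}, with the essential new feature that the expectation $\EE_{\nu_N^\mathcal G}$ (resp.\ $\EE_{\nu_N^\Phi}$) now sits \emph{outside} the Hellinger distance rather than inside the likelihood. First I would write, for the $\mathcal G$-case,
\[
2\,\EE_{\nu_N^\mathcal G}\big(\dhh^2(\mu^y,\mu^{y,N,\mathcal G}_\mathrm{sample})\big) \leq \EE_{\nu_N^\mathcal G}(I) + \EE_{\nu_N^\mathcal G}(II),
\]
where $I$ collects the pointwise difference of the two square-root likelihoods and $II$ collects the normalising-constant discrepancy $Z_{N,\mathcal G}^\mathrm{sample}\big(Z^{-1/2}-(Z_{N,\mathcal G}^\mathrm{sample})^{-1/2}\big)^2$, exactly as before. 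Tonelli's Theorem then lets me interchange $\EE_{\nu_N^\mathcal G}$ with $\int_X\,\mu_0(\mathrm du)$ throughout, since all integrands are non-negative.

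For $\EE_{\nu_N^\mathcal G}(I)$ I would proceed as in Theorem~\ref{thm:hell_mean}: local Lipschitz continuity of the exponential, the identity $a^2-b^2=(a-b)(a+b)$ and the reverse triangle inequality reduce the integrand to a constant multiple of $\big(\|y-\mathcal G(u)\|+\|y-\mathcal G_N(u)\|\big)^2\,\|\mathcal G(u)-\mathcal G_N(u)\|^2$. The new step is to decouple the two factors inside $\EE_{\nu_N^\mathcal G}$ by H\"older's inequality with conjugate exponents $(2+\delta)/\delta$ and $(2+\delta)/2$; the data-misfit factor then carries a finite, $(U,N)$-independent bound by Fernique's Theorem (Proposition~\ref{prop:fernique}), exactly as in the proof of Lemma~\ref{thm:bound_zsample}, while the second factor produces the claimed $\big(\EE_{\nu_N^\mathcal G}(\|\mathcal G-\mathcal G_N\|^{2+\delta})\big)^{2/(2+\delta)}$ before integrating over $X$. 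This is precisely where the exponent $2+\delta$ (rather than $2$) originates.

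The main obstacle is $\EE_{\nu_N^\mathcal G}(II)$, because here the random normalising constant $Z_{N,\mathcal G}^\mathrm{sample}$ appears together with, and is correlated with, the discrepancy $(Z-Z_{N,\mathcal G}^\mathrm{sample})^2$. I would use the elementary bound $(a^{-1/2}-b^{-1/2})^2\leq\max(a^{-3},b^{-3})(a-b)^2$, the uniform upper bound $Z_{N,\mathcal G}^\mathrm{sample}\leq 1$ from Lemma~\ref{thm:bound_zsample}, and $\max(Z^{-3},(Z_{N,\mathcal G}^\mathrm{sample})^{-3})\leq Z^{-3}+(Z_{N,\mathcal G}^\mathrm{sample})^{-3}$, reducing matters to controlling $\EE_{\nu_N^\mathcal G}\big((Z_{N,\mathcal G}^\mathrm{sample})^{-3}(Z-Z_{N,\mathcal G}^\mathrm{sample})^2\big)$. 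Applying Jensen's inequality to write $(Z-Z_{N,\mathcal G}^\mathrm{sample})^2\leq\int_X(\cdots)^2\,\mu_0(\mathrm du)$, swapping with $\EE_{\nu_N^\mathcal G}$ by Tonelli, and then using a \emph{three-factor} H\"older inequality in the probability space---on $(Z_{N,\mathcal G}^\mathrm{sample})^{-3}$, on the data-misfit factor, and on $\|\mathcal G-\mathcal G_N\|^2$ with exponent $(2+\delta)/2$---decouples everything. The first factor is finite by the negative-moment bound of Lemma~\ref{thm:bound_zsample}, which holds for \emph{every} $1\leq p<\infty$; this is exactly what makes the decoupling possible, since the remaining H\"older budget $\tfrac1{p_1}+\tfrac1{p_2}=\delta/(2+\delta)>0$ is nonempty precisely because $\delta>0$. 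The $\Phi$-case is simpler: since $\Phi_N$ enters the exponent linearly, the Lipschitz estimate yields $|\Phi-\Phi_N|$ directly, so no field-by-field H\"older split forcing a $2+\delta$ exponent is needed in term $I$ and the exponent stays at $2$ (Fernique still controls the multiplicative $\exp(-\Phi_N)$ factor); the negative moments of $Z_{N,\Phi}^\mathrm{sample}$ supplied by Lemma~\ref{thm:bound_zsample} again handle term $II$.
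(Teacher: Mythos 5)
Your proposal follows essentially the same route as the paper's own proof: the same splitting of $\EE_{\nu_N}\big(\dhh^2\big)$ into a likelihood-difference term $I$ and a normalising-constant term $II$, Tonelli's Theorem to interchange the two integrals, the local Lipschitz/factorisation estimates with the reverse triangle inequality, a $\delta$-dependent H\"older inequality to decouple the data-misfit factor (bounded via Fernique's Theorem as in Lemma \ref{thm:bound_zsample}) from the emulator error, Jensen for $|Z-Z_{N}^\mathrm{sample}|^2$, and the all-$p$ negative-moment bounds on $Z_{N}^\mathrm{sample}$ for term $II$. Your three-factor H\"older in term $II$ merely makes explicit a decoupling that the paper leaves implicit in its two-factor version, and your conjugate exponents $(2+\delta)/\delta$, $(2+\delta)/2$ are a harmless reparametrisation of the paper's $(1+\delta)/\delta$, $1+\delta$.
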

\begin{proof}
We start with $\mu^{y,N,\mathcal G}_\mathrm{sample}$. By the definition of the Hellinger distance and the linearity of expectation, we have
\begin{align*}
&\EE_{\nu_N^\mathcal G} \left(2  \; \dhh(\mu^y, \mu^{y,N,\mathcal G}_\mathrm{sample}) \right) ^2
= \EE_{\nu_N^\mathcal G} \left( \int_X \left( \sqrt{\frac{d\mu^y}{d\mu_0}} - \sqrt{\frac{d\mu^{y,N,\mathcal G}_\mathrm{sample}}{d\mu_0}} \right)^2 \mu_0(\mathrm{d}u) \right)\\
&\leq \frac{2}{Z} \EE_{\nu_N^\mathcal G} \left( \int_X \left(\exp \big(-\frac{1}{4 \sigma_\eta^2} \left\| y -  \mathcal G (u) \right\|^2 \big) - \exp \big(-\frac{1}{4 \sigma_\eta^2} \left\| y -  \mathcal G_N (u) \right\|^2 \big)\right)^2 \mu_0(\mathrm{d}u) \right) \\
& \qquad + 2 \; \EE_{\nu_N^\mathcal G} \left( Z_{N,\mathcal G}^\mathrm{sample} |Z^{-1/2} - (Z_{N, \mathcal G}^\mathrm{sample})^{-1/2} |^2 \right) \\
&=: I + II.
\end{align*}

For the first term, Tonelli's Theorem, the local Lipschitz continuity of the exponential function, the equality $a^2-b^2 = (a-b)(a+b)$, the reverse triangle inequality and H\"older's inequality with conjugate exponents $p=(1+\delta)/ \delta$ and $q = 1+ \delta$ give
\begin{align*}
&\frac{Z}{2} I =  \int_X  \EE_{\nu_N^\mathcal G} \left( \left( \exp \big(-\frac{1}{4 \sigma_\eta^2} \left\| y -  \mathcal G (u) \right\|^2 \big) - \exp \big(-\frac{1}{4 \sigma_\eta^2} \left\| y -  \mathcal G_N (u) \right\|^2 \big)\right)^2 \right) \mu_0(\mathrm{d}u)  \\
&\leq \frac{1}{\sigma_\eta^2}  \int_X  \EE_{\nu_N^\mathcal G} \left( \left( \| y -  \mathcal G(u)\|^2 - \| y -  \mathcal G_N(u) \|^2  \right)^2 \right) \mu_0(\mathrm{d}u)  \\
&\leq \frac{1}{\sigma_\eta^2} \int_X  \EE_{\nu_N^\mathcal G} \Big( \left(\left\| y -  \mathcal G_N (u) \right\| + \left\| y -  \mathcal G (u) \right\|\right)^2 \|\mathcal G_N (u) - \mathcal G (u)\|^2 \Big) \mu_0(\mathrm{d}u)  \\
&\leq \frac{1}{\sigma_\eta^2} \int_X  \Big(\EE_{\nu_N^\mathcal G} \left( \left( \| y -  \mathcal G(u)\| + \| y -  \mathcal G_N(u) \|\right)^{2(1+\delta) / \delta})\right) \Big)^{\delta/(\delta+1)} \Big(\EE_{\nu_N^\mathcal G} \left( \|\mathcal G(u) -  \mathcal G_N(u)\|^{2(1+\delta)}  \right) \Big)^{1/(1+\delta)} \mu_0(\mathrm{d}u) \\
&\leq \frac{1}{\sigma_\eta^2}  \sup_{u \in X}  \Big(\EE_{\nu_N^\mathcal G} \left( \left( \| y -  \mathcal G(u)\| + \| y -  \mathcal G_N(u) \|\right)^{2(1+\delta) / \delta})\right) \Big)^{\delta/(\delta+1)} \int_X \Big(\EE_{\nu_N^\mathcal G} \left( \|\mathcal G(u) -  \mathcal G_N(u)\|^{2(1+\delta)}  \right) \Big)^{1/(1+\delta)} \mu_0(\mathrm{d}u).
\end{align*}
for any $\delta > 0$. The supremum in the above bound can be bounded independently of $U$ and $N$ by Fernique's Theorem as in the proof of {Lemma} \ref{thm:bound_zsample}. It follows that there exists a constant $C$ independent of $U$ and $N$ such that
\[
\frac{Z}{2} I \leq C \; \left\|\Big(\EE_{\nu_N^\mathcal G} \Big(\|\mathcal G_N  - \mathcal G\|^{2(1+ \delta)} \Big) \Big)^{1/2(1+\delta)}\right\|^2_{L^2_{\mu_0}(X)}.
\]

For the second term in the bound on the Hellinger distance, we have
\begin{align*}
\frac{1}{2} II &= \EE_{\nu_N^\mathcal G} \left( Z_{N,\mathcal G}^\mathrm{sample} |Z^{-1/2} - (Z_{N, \mathcal G}^\mathrm{sample})^{-1/2} |^2 \right) \\
&\leq \EE_{\nu_N^\mathcal G} \left( Z_{N,\mathcal G}^\mathrm{sample} \max(Z^{-3},(Z_{N,\mathcal G}^\mathrm{sample})^{-3}) |Z - Z_{N,\mathcal G}^\mathrm{sample}|^2 \right).
\end{align*}
By Jensen's inequality and the same argument as above, we have
\begin{align*}
|Z - Z_{N,\mathcal G}^\mathrm{sample}|^2 &= \left| \int_X \left(\exp \big(-\frac{1}{4 \sigma_\eta^2} \left\| y -  \mathcal G (u) \right\|^2 \big) - \exp \big(-\frac{1}{4 \sigma_\eta^2} \left\| y -  \mathcal G_N (u) \right\|^2 \big)\right) \mu_0(\mathrm{d}u) \right|^2 \\
&\leq \frac{1}{\sigma_\eta^4} \int_X  \left(\left\| y -  \mathcal G_N (u) \right\| + \left\| y -  \mathcal G (u) \right\|\right)^2 \|\mathcal G_N (u) - \mathcal G (u)\|^2 \mu_0(\mathrm{d}u).
\end{align*}
Together with Tonelli's Theorem and H\"older's inequality with conjugate exponents $p=(1+\delta)/ \delta$ and $q = 1+ \delta$, we then have
\begin{align*}
&\frac{1}{2} II \\
&\leq \frac{1}{\sigma_\eta^4} \EE_{\nu_N^\mathcal G} \left( Z_{N,\mathcal G}^\mathrm{sample} \max(Z^{-3},(Z_{N,\mathcal G}^\mathrm{sample})^{-3}) \int_X  \left(\left\| y -  \mathcal G_N (u) \right\| + \left\| y -  \mathcal G (u) \right\|\right)^2 \|\mathcal G_N (u) - \mathcal G (u)\|^2 \mu_0(\mathrm{d}u) \right) \\
&= \frac{1}{\sigma_\eta^4} \int_X \EE_{\nu_N^\mathcal G} \left( Z_{N,\mathcal G}^\mathrm{sample} \max(Z^{-3},(Z_{N,\mathcal G}^\mathrm{sample})^{-3})   ( \left\| y -  \mathcal G_N (u) \right\| + \left\| y -  \mathcal G (u) \right\|)^2 \|\mathcal G_N (u) - \mathcal G (u)\|^2 \right) \mu_0(\mathrm{d}u) \\
&\leq \frac{1}{\sigma_\eta^4} \sup_{u \in X} \Big( \EE_{\nu_N^\mathcal G} \left( (Z_{N,\mathcal G}^\mathrm{sample})^{(1+\delta)/ \delta} \max(Z^{-3},(Z_{N,\mathcal G}^\mathrm{sample})^{-3})^{(1+\delta)/ \delta}  \left( \| y -  \mathcal G(u)\| + \| y -  \mathcal G_N(u) \|\right)^{2(1+\delta) / \delta})\right) \Big)^{\delta/(\delta+1)} \\
& \qquad \int_X \Big( \EE_{\nu_N^\mathcal G} \left( \|\mathcal G_N(u) -  \mathcal G(u)\|^{2(1+\delta)}  \right) \Big)^{1/(1+\delta)} \mu_0(\mathrm{d}u),
\end{align*}
for any $\delta > 0$. The supremum in the bound above can be bounded independently of $U$ and $N$ by {Lemma} \ref{thm:bound_zsample} and Fernique's Theorem. The first claim of the Theorem then follows.

The proof for $\mu^{y,N,\Phi}_\mathrm{sample}$ is similar. Using an identical corresponding splitting of the Hellinger distance $\EE_{\nu_N^\Phi} \left(2  \; \dhh^2(\mu^y, \mu^{y,N,\Phi}_\mathrm{sample}) \right) \leq I + II$, we bound the first term by Tonelli's Theorem, H\"older's inequality with conjugate exponents $p=(1+\delta)/ \delta$ and $q = 1+ \delta$ and the inequality $|\exp(x) - \exp(y)| \leq (\exp(x) + \exp(y)) |x-y|$, for all $x,y \in \R$,
\begin{align*}
&\frac{Z}{2} I =  \int_X  \EE_{\nu_N^\Phi} \left( \left( \exp \big(-\Phi(u)/2 \big) - \exp \big(-\Phi_N(u)/2 \big)\right)^2 \right) \mu_0(\mathrm{d}u) \\
&\leq \int_X \EE_{\nu_N^\Phi} \Big( |\exp\big(-\Phi(u)\big) + \exp\big(-\Phi_N(u)\big)|^2 |\Phi(u) - \Phi_N(u)|^2 \Big) \mu_0(\mathrm{d}u)\\
&\leq \int_X \EE_{\nu_N^\Phi} \Big( |\exp\big(-\Phi(u)\big) + \exp\big(-\Phi_N(u)\big)|^{2(1+\delta)/ \delta} \Big)^{\delta/(1+\delta)} \EE_{\nu_N^\Phi} \Big( |\Phi(u) - \Phi_N(u)|^{2(1+\delta)} \Big)^{1/(1+\delta)} \mu_0(\mathrm{d}u) \\
&\leq \sup_{u \in X} \EE_{\nu_N^\Phi} \Big( \big(\exp\big(-\Phi(u)\big) + \exp\big(-\Phi_N(u)\big)\big)^{2(1+\delta)/ \delta}\Big)^{\delta/(1+\delta)}   \left\|\EE_{\nu_N^\Phi} \left( |\Phi(u) - \Phi_N(u)|^{2(1+\delta)} \right)^{1/(2(1+\delta))}\right\|^2_{L^{2}_{\mu_0}(X)}.
\end{align*}
where the first factor can be bounded independently of $U$ and $N$ as in Lemma \ref{thm:bound_zsample}.

For the second term, we have as before
\begin{align*}
\frac{1}{2} II \leq \EE_{\nu_N^\Phi} \left( Z_{N,\Phi}^\mathrm{sample} \max(Z^{-3},(Z_{N,\Phi}^\mathrm{sample})^{-3}) |Z - Z_{N,\Phi}^\mathrm{sample}|^2 \right).
\end{align*}
and
\begin{align*}
|Z - Z_{N,\Phi}^\mathrm{sample}|^2 &= \left| \int_X \left(\exp \big(-\Phi(u) \big) - \exp \big(-\Phi_N(u) \big)\right) \mu_0(\mathrm{d}u) \right|^2 \\
&\leq \int_X  \left(\exp \big(-\Phi(u) \big) + \exp \big(-\Phi_N(u) \big)\right)^2 (\Phi(u) - \Phi_N(u))^2 \mu_0(\mathrm{d}u).
\end{align*}
Together with Tonelli's Theorem and H\"older's inequality with conjugate exponents $p=(1+\delta)/ \delta$ and $q = 1+ \delta$, we then have
\begin{align*}
\frac{1}{2} II &\leq \EE_{\nu_N^\Phi} \left( Z_{N,\Phi}^\mathrm{sample} \max(Z^{-3},(Z_{N,\Phi}^\mathrm{sample})^{-3}) \int_X  \left(\exp \big(-\Phi(u) \big) + \exp \big(-\Phi_N(u) \big)\right)^2 (\Phi(u) - \Phi_N(u))^2 \mu_0(\mathrm{d}u) \right) \\
&= \int_X \EE_{\nu_N^\Phi} \left( Z_{N,\Phi}^\mathrm{sample} \max(Z^{-3},(Z_{N,\Phi}^\mathrm{sample})^{-3})\left(\exp \big(-\Phi(u) \big) + \exp \big(-\Phi_N(u) \big)\right)^2 (\Phi(u) - \Phi_N(u))^2 \right) \mu_0(\mathrm{d}u) \\
&\leq \left(\EE_{\nu_N^\mathcal G} \left( (Z_{N,\Phi}^\mathrm{sample})^{(1+\delta)/ \delta} \max(Z^{-3},(Z_{N,\Phi}^\mathrm{sample})^{-3})^{(1+\delta)/ \delta} \sup_{u \in X} \left(\exp \big(-\Phi(u) \big) + \exp \big(-\Phi_N(u) \big)\right)^{2(1+\delta)/ \delta} \right) \right)^{\delta/(\delta+1)} \\
& \qquad \int_X \Big(\EE_{\nu_N^\Phi} \left( \|\Phi(u) -  \Phi_N(u)\|^{2(1+\delta)}  \right) \Big)^{1/(1+\delta)} \mu_0(\mathrm{d}u),
\end{align*}
for any $\delta > 0$. The first expected value in the bound above can be bounded independently of $U$ and $N$ by {Lemma} \ref{thm:bound_zsample}. The second claim of the Theorem then follows.
\end{proof}

Similar to Theorem \ref{thm:hell_mean} and Theorem \ref{thm:hell_marginal}, Theorem \ref{thm:hell_sample} provides error bounds for general Gaussian process emulators of $\mathcal G$ and $\Phi$. As a particular example, we can take the emulators defined by \eqref{eq:pred_eq}. {We can now combine Assumption \ref{ass:reg}, Theorem \ref{thm:hell_sample} and Proposition \ref{prop:mean_conv} with $\beta = 0$ to derive error bounds in terms of the fill distance.}

\begin{corollary}\label{cor:rate_sample} Suppose $\mathcal G_N$ and $\Phi_N$ are defined as in \eqref{eq:pred_eq}, with Mat\`ern kernel $k=k_{\nu,\lambda,\sigma_k^2}$. Suppose { Assumption A holds}, Assumption \ref{ass:reg} { holds with $s=\nu + K/2$,} and the assumptions of Proposition \ref{prop:mean_conv} and Theorem \ref{thm:hell_sample} are satisfied. Then there exist constants $C_1, C_2, C_3$ and $C_4$, independent of $U$ and $N$, such that
\begin{align*}
\dhh(\mu^y, \mu^{y,N,\mathcal G}_\mathrm{marginal}) \leq C_1 h_U^{\nu + K/2} + C_2 h_U^\nu, \qquad
\text{and} \quad \dhh(\mu^y, \mu^{y,N, \Phi}_\mathrm{marginal}) \leq C_3 h_U^{\nu + K/2} + C_4 h_U^\nu.
\end{align*}
\end{corollary}
\begin{proof} The proof is similar to that of Corollary \ref{cor:rate_marginal}, exploiting that for a Gaussian random variable $X$, we have $\EE((X- \EE(X))^4) = 3 (\EE((X-\EE(X))^2))^2$.
\end{proof}

{If Assumption \ref{ass:reg} holds only for some $s < \nu + K/2$, an analogue of Corollary \ref{cor:rate_sample} can be proved using Proposition \ref{prop:mean_conv_int} with $\beta = 0$.}

We furthermore have the following result on a generalised total variation distance \cite{rh15}, defined by
\[
d_\mathrm{gTV}(\mu^y, \mu^{y,N,\mathcal G}_\mathrm{sample}) = \sup_{\|f\|_{C^0(X)}\leq 1} \Big( \EE_{\nu^\mathcal G_N} \big( |\EE_{\mu^y}(f) - \EE_{\mu^{y,N,\mathcal G}_\mathrm{sample}} (f)|^2 \big) \Big)^{1/2},
\]
for $\mu^{y,N,\mathcal G}_\mathrm{sample}$, and defined analogously for $\mu^{y,N,\Phi}_\mathrm{sample}$.

\begin{theorem}\label{thm:gtv_sample} Under the Assumptions of {Lemma} \ref{thm:bound_zsample}, there exist constants $C_1$ and $C_2$, independent of $U$ and $N$, such that for any $\delta > 0$ 
\begin{align*}
d_\mathrm{gTV}(\mu^y, \mu^{y,N,\mathcal G}_\mathrm{sample}) &\leq C_1 \; \left\| \Big(\EE_{\nu_N^\mathcal G} \Big(\|\mathcal G - \mathcal G_N\|^{2+ \delta} \Big) \Big)^{1/(2+\delta)}\right\|_{L^2_{\mu_0}(X)}, \\ 
d_\mathrm{gTV}(\mu^y, \mu^{y,N,\Phi}_\mathrm{sample}) &\leq C_2 \; \left\|\Big(\EE_{\nu_N^\mathcal G} \Big(\|\Phi - \Phi_N \|^{2+\delta} \Big)\Big)^{1/(2+\delta)}\right\|_{L^2_{\mu_0}(X)}.
\end{align*}
\end{theorem}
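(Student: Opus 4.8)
The plan is to deduce Theorem \ref{thm:gtv_sample} from Theorem \ref{thm:hell_sample} by exploiting the realization-wise bound, recalled at the start of section \ref{sec:gp_app}, that relates the error in expected values to the Hellinger distance. I would present the argument for $\mu^{y,N,\mathcal{G}}_\mathrm{sample}$; the case of $\mu^{y,N,\Phi}_\mathrm{sample}$ is identical, using the second claim of Theorem \ref{thm:hell_sample}.

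First, fix a realization of the emulator $\mathcal{G}_N$, so that $\mu^{y,N,\mathcal{G}}_\mathrm{sample}$ is a deterministic probability measure. For any $f$ with $\|f\|_{C^0(X)} \leq 1$ we have $f^2 \leq 1$ pointwise, whence $\EE_{\mu^y}(f^2) \leq 1$ and $\EE_{\mu^{y,N,\mathcal{G}}_\mathrm{sample}}(f^2) \leq 1$, both measures being probability measures. The Hellinger--expectation inequality stated in section \ref{sec:gp_app} then yields
\[
\big| \EE_{\mu^y}(f) - \EE_{\mu^{y,N,\mathcal{G}}_\mathrm{sample}}(f) \big| \leq 2\sqrt{2}\; \dhh(\mu^y, \mu^{y,N,\mathcal{G}}_\mathrm{sample}),
\]
with a constant that does not depend on $f$. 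Squaring and taking the supremum over admissible $f$ leaves the right-hand side unchanged, so this is a realization-wise bound on the integrand appearing in $d_\mathrm{gTV}$.

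Next, I would interchange supremum and expectation. For each fixed $f$, the quantity $\EE_{\nu_N^\mathcal{G}}\big( | \EE_{\mu^y}(f) - \EE_{\mu^{y,N,\mathcal{G}}_\mathrm{sample}}(f) |^2 \big)$ is dominated by the expectation of the pointwise supremum over $f$, and hence
\[
d_\mathrm{gTV}(\mu^y, \mu^{y,N,\mathcal{G}}_\mathrm{sample})^2 \leq \EE_{\nu_N^\mathcal{G}}\Big( \sup_{\|f\|_{C^0(X)} \leq 1} \big| \EE_{\mu^y}(f) - \EE_{\mu^{y,N,\mathcal{G}}_\mathrm{sample}}(f) \big|^2 \Big) \leq 8\; \EE_{\nu_N^\mathcal{G}}\big( \dhh(\mu^y, \mu^{y,N,\mathcal{G}}_\mathrm{sample})^2 \big).
\]
Taking square roots and applying the first bound of Theorem \ref{thm:hell_sample} gives the claimed estimate, with $C_1 = 2\sqrt{2}$ times the constant there.

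The result is essentially a corollary, so I do not anticipate a genuine obstacle. The only point needing care is the direction of the supremum--expectation interchange: the inequality $\sup_f \EE(\cdot) \leq \EE(\sup_f \cdot)$ always holds, and it is precisely this direction that lets the root-mean-square Hellinger distance $(\EE_{\nu_N^\mathcal{G}}(\dhh^2))^{1/2}$ estimated in Theorem \ref{thm:hell_sample} control $d_\mathrm{gTV}$. One should also note that every admissible $f$ lies in $L^2_{\mu^y} \cap L^2_{\mu^{y,N,\mathcal{G}}_\mathrm{sample}}$, which is immediate since $f$ is bounded and both measures are probability measures, so the Hellinger--expectation inequality indeed applies.
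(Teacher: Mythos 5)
Your proof is correct, but it takes a genuinely different route from the paper. The paper proves Theorem \ref{thm:gtv_sample} directly: starting from the definition of $d_{\mathrm{gTV}}$, it pulls out the test function $f$, splits the integrand into a density-difference term and a normalising-constant term (using $| Z^{-1} - (Z_{N,\Phi}^\mathrm{sample})^{-1}|^2 \leq \max(Z^{-4},(Z_{N,\Phi}^\mathrm{sample})^{-4}) |Z - Z_{N,\Phi}^\mathrm{sample}|^2$), and then re-runs the Fernique/H\"older estimates from the proof of Theorem \ref{thm:hell_sample} together with Lemma \ref{thm:bound_zsample}. You instead obtain the result as a genuine corollary of Theorem \ref{thm:hell_sample}: you apply, realization-wise, the Hellinger-to-expectation inequality quoted at the start of section \ref{sec:gp_app} (with constant $2\sqrt{2}$, since $\|f\|_{C^0(X)}\leq 1$ makes both second moments at most $1$), and then integrate over the law of the emulator. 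This is shorter, makes the logical dependence on Theorem \ref{thm:hell_sample} explicit, and gives the stated bounds with constants equal to $2\sqrt{2}$ times those of that theorem; since both bounds in the two theorems have identical right-hand sides, nothing is lost. The paper's direct computation, by contrast, is self-contained and does not rely on the abstract Hellinger--expectation inequality. One small refinement to your write-up: the intermediate quantity $\EE_{\nu_N^{\mathcal G}}\bigl(\sup_{f} |\cdot|^2\bigr)$ is unnecessary (and strictly speaking requires measurability of the inner supremum); since the realization-wise bound $|\EE_{\mu^y}(f) - \EE_{\mu^{y,N,\mathcal G}_{\mathrm{sample}}}(f)|^2 \leq 8\, \dhh(\mu^y,\mu^{y,N,\mathcal G}_{\mathrm{sample}})^2$ holds for each fixed admissible $f$, you can take $\EE_{\nu_N^{\mathcal G}}$ of both sides for fixed $f$ and only then take the supremum over $f$, avoiding the interchange altogether.
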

\begin{proof} We give the proof for $\mu^{y,N,\Phi}_\mathrm{sample}$; the proof for $\mu^{y,N,\mathcal G}_\mathrm{sample}$ is identical. By definition, we have
\begin{align*}
&d_\mathrm{gTV}(\mu^y, \mu^{y,N,\Phi}_\mathrm{sample}) = \sup_{\|f\|_{C^0(X)}\leq 1} \Big( \EE_{\nu^\Phi_N} \big( |\EE_{\mu^y}(f) - \EE_{\mu^{y,N,\Phi}_\mathrm{sample}} (f)|^2 \big) \Big)^{1/2} \\
&= \sup_{\|f\|_{C^0(X)}\leq 1} \left( \EE_{\nu^\Phi_N} \left( \left|\int_X f(u) \left(\exp(-\Phi(u)) Z^{-1} - \exp (-\Phi_N(u)) (Z_{N,\Phi}^\mathrm{sample})^{-1} \right) \mu_0(\mathrm{d}u)\right|^2 \right) \right)^{1/2} \\
&\leq \left( \EE_{\nu^\Phi_N} \left( \left|\int_X \left(\exp(-\Phi(u)) Z^{-1} - \exp (-\Phi_N(u)) (Z_{N,\Phi}^\mathrm{sample})^{-1} \right) \mu_0(\mathrm{d}u)\right|^2 \right) \right)^{1/2} \\
&\leq \frac{2}{Z} \left( \EE_{\nu^\Phi_N} \left( \int_X |\exp(-\Phi(u)) - \exp (-\Phi_N(u))|^{2} \mu_0(\mathrm{d}u) \right) \right)^{1/2} + \\
&\qquad \left( \EE_{\nu^\Phi_N} \left( (Z_{N,\Phi}^\mathrm{sample})^{2} | Z^{-1} - (Z_{N,\Phi}^\mathrm{sample})^{-1}|^2 \right) \right)^{1/2} \\
&=: I + II.
\end{align*}
The terms $I$ and $II$ can be bounded by the same arguments as the terms $I$ and $II$ in the proof of Theorem \ref{thm:hell_sample}, by noting that $| Z^{-1} - (Z_{N,\Phi}^\mathrm{sample})^{-1}|^2 \leq \max(Z^{-4},(Z_{N,\Phi}^\mathrm{sample})^{-4}) |Z - Z_{N,\Phi}^\mathrm{sample}|^2$. 
\end{proof}

\section{Numerical Examples}\label{sec:num}
{We consider the model inverse problem of determining the diffusion coefficient of an elliptic partial differential equation (PDE) in divergence form from observation of a finite set of noisy continuous functionals of the solution.} This type of equation arises, for example, in the modelling of groundwater flow in a porous medium. We consider the one-dimensional model problem
{\begin{equation}\label{eq:mod}
-\frac{\mathrm d}{\mathrm{d} x} \Big(\kappa(x;u) \frac{\mathrm d p}{\mathrm d x} (x;u)\Big) = 1 \quad \text{in } (0,1), \qquad  p(1;u) = p(0;u) = 0,
\end{equation}}
where the coefficient $\kappa$ depends on parameters $u = \{u_j\}_{j=1}^K \in [-1,1]^K$ through the linear expansion 
{\[
\kappa(x;u) = \frac{1}{100} + \sum_{j=1}^K \frac{u_j}{200 (K+1)} \sin(2 \pi j x).
\]}
{In this setting the forward map $G : [-1,1]^K \rightarrow H^1_0(D)$, defined by $G(u) = p$, is an analytic function \cite{cohen2011analytic}. Since the observation operator $\mathcal O$ is linear and bounded, Assumption \ref{ass:reg} is satisfied for any $s > K/2$. 

Unless stated otherwise, we will throughout this section approximate the solution $p$ by standard, piecewise linear, continuous finite elements on a uniform grid with mesh size $h=1/32$. The corresponding approximate forward map, denoted by $G_h$, is also an analytic function of $u$ \cite{cohen2011analytic}, and Assumption \ref{ass:reg} is satisfied for any $s > K/2$ also for $G_h$. By slight abuse of notation, we will denote the posterior measure corresponding to the forward map $G_h$ by $\mu^{y}$, and use this as our reference measure. The error induced by the finite element approximation will be ignored.}

As prior measure $\mu_0$ on $[-1,1]^K$, we use the uniform product measure $\mu_0(\mathrm{d}u) = \bigotimes_{j =1}^K \frac{\mathrm d u_j}{2}$. 
The observations $y$ are taken as noisy point evaluations of the solution, $y_j = p(x_j; u^*) + \eta_j$ with $\eta \sim \mathcal N(0,I)$ and $\{x_j\}_{j=1}^J$ evenly spaced points in $(0,1)$. To generate $y$, the truth $u^*$ was chosen as a random sample from the prior, and the solution $p$ was approximated by finite elements on a uniform grid with mesh size $h^*=1/1024$.

The emulators $\mathcal G_N$ and $\Phi_N$ are computed as described in section \ref{ssec:gp_sk}, with mean and covariance kernel given by \eqref{eq:pred_eq}. In the Gaussian process prior \eqref{eq:gp}, we choose $m \equiv 0$ and $k = k_{\nu,1,1}$, a Mat\`ern kernel with variance $\sigma_k^2=1$, correlation length $\lambda=1$ and smoothness parameter $\nu$. 

For a given approximation $\mu^{y,N}$ to $\mu^{y}$, we will compute twice the Hellinger distance squared,
{\begin{equation*}
2 \dhh(\mu^{y}, \mu^{y,N})^2 = \int_{[-1,1]^K} \left(\sqrt{\frac{d \mu^{y}}{d \mu_0}}(u) - \sqrt{\frac{d \mu^{y,N}}{d \mu_0}}(u) \right)^2 d \mu_0(u).
\end{equation*}}
The integral over $[-1,1]^K$ is approximated by a randomly shifted lattice rule with product weight parameters $\gamma_j=1/j^2$ \cite{niederreiter}. The generating vector for the rule used is available from Frances Kuo's website (\texttt{http://web.maths.unsw.edu.au/$\sim$fkuo/}) as ``lattice-39102-1024-1048576.3600''. For the marginal and random approximations, the expected value over the Gaussian process is approximated by Monte Carlo sampling, using the MATLAB command \texttt{mvnrnd}. 

For the design points $U$, we choose a uniform tensor grid. In $[-1,1]^K$, the uniform tensor grid consisting of $N = N_*^K$ points, for some $N_* \in \mathbb N$, has fill distance $h_U = \sqrt{K} (N_* - 1)^{-1}$. In Table \ref{tbl:conv}, we give the convergence rates in $N$ for $\sup_{u \in X} | f(u) - m_N^f(u)|^2$ and $\| f - m_N^f\|_{L^2(X)}^2$ predicted by Proposition \ref{prop:mean_conv}.

\begin{table} [p]
\begin{center}
\renewcommand{\arraystretch}{1.25}
\begin{tabular}{ |c|| c c c c||c|| c c c c|} \hline 
\multicolumn{5}{|c||}{$\sup_{u \in X} | f(u) - m_N^f(u)|^2$} &\multicolumn{5}{|c|}{$\| f - m_N^f\|_{L^2(X)}^2$} \\ \hline
\backslashbox{$\nu$}{$K$}& 1& 2& 3& 4 & \backslashbox{$\nu$}{$K$}& 1& 2& 3& 4\\ \hline 
1&  2& 1& 0.67&  0.5 & 1&  3& 2& 1.7&  1.5\\
5&  & 5& 3.3&  & 5&  & 6& 4.3& \\ \hline
\end{tabular}
\end{center}
\caption{Convergence rates in $N$ predicted by Proposition \ref{prop:mean_conv} for uniform tensor grids.}
\label{tbl:conv}
\end{table}

\begin{table} [p]
\begin{center}
\renewcommand{\arraystretch}{1.25}
\begin{tabular}{ |c|| c c ||c|| c c ||c|| c c|} \hline 
\multicolumn{3}{|c||}{$\mu^{y,N,\mathcal G}_\mathrm{mean}$} &\multicolumn{3}{|c|}{$\mu^{y,N,\mathcal G}_\mathrm{marginal}$} &\multicolumn{3}{|c|}{$\mu^{y,N,\mathcal G}_\mathrm{sample}$}\\ \hline
\backslashbox{$\nu$}{$K$}& 2& 3 & \backslashbox{$\nu$}{$K$}& 2& 3 & \backslashbox{$\nu$}{$K$}& 2& 3\\ \hline 
1&  2.6& 2.4& 1&  2.6 & 2.2&  1& 2.3& 1.7 \\
5&  6.2& 4.5& 5 & 6.2& 4.6 & 5& 6.1& 4.4\\ \hline
\end{tabular}
\end{center}
\caption{Observed convergence rates in $N$ of $\dhh(\mu^{y}, \mu^{y,N,\mathcal G})^2$, as shown in Figures \ref{fig:mean}, \ref{fig:marg} and \ref{fig:rand}.}
\label{tbl:obs_conv_G_1}
\end{table}

\begin{table} [p]
\begin{center}
\renewcommand{\arraystretch}{1.25}
\begin{tabular}{ |c|| c c ||c|| c c ||c|| c c|} \hline 
\multicolumn{3}{|c||}{$\mu^{y,N,\Phi}_\mathrm{mean}$} &\multicolumn{3}{|c|}{$\mu^{y,N,\Phi}_\mathrm{marginal}$} &\multicolumn{3}{|c|}{$\mu^{y,N,\Phi}_\mathrm{sample}$}\\ \hline
\backslashbox{$\nu$}{$K$}& 2& 3 & \backslashbox{$\nu$}{$K$}& 2& 3 & \backslashbox{$\nu$}{$K$}& 2& 3\\ \hline 
1&  2.5& 2& 1&  1.8 & 1.1&  1& 1.1& 0.76 \\
5&  5.4& 3.8& 5 & 4.9& 3.2 & 5& 4.9& 3.3\\ \hline
\end{tabular}
\end{center}
\caption{Observed convergence rates in $N$ of $\dhh(\mu^{y}, \mu^{y,N,\Phi})^2$, as shown in Figures \ref{fig:mean}, \ref{fig:marg} and \ref{fig:rand}.}
\label{tbl:obs_conv_Phi_1}
\end{table}

\begin{table} [p]
\begin{center}
\renewcommand{\arraystretch}{1.25}
\begin{tabular}{ |c|| c c c c||c|| c c c c|} \hline 
\multicolumn{5}{|c||}{$\mu^{y,N,\mathcal G}_\mathrm{mean}$} &\multicolumn{5}{|c|}{$\mu^{y,N,\Phi}_\mathrm{mean}$} \\ \hline
\backslashbox{$\nu$}{$K$}&1 & 2& 3 & 4& \backslashbox{$\nu$}{$K$}&1 & 2& 3 & 4\\ \hline 
1&  4.1& 2.7& 2.3&  2.3 & 1& 4&  2.7& 2.1& 1.9 \\ \hline
\end{tabular}
\end{center}
\caption{Observed convergence rates in $N$ of $\dhh(\mu^{y}, \mu^{y,N,\Phi})^2$ and $\dhh(\mu^{y}, \mu^{y,N,\mathcal G})^2$, as shown in Figure \ref{fig:mean_15}.}
\label{tbl:obs_conv_15}
\end{table}

\subsection{Mean-based approximations}
In Figure \ref{fig:mean}, we show $2 \dhh(\mu^y, \mu^{y,N,\mathcal G}_\mathrm{mean})^2$ (left) and $2 \dhh(\mu^y, \mu^{y,N,\Phi}_\mathrm{mean})^2$ (right), for a variety of choices of $K$ and $\nu$, for $J=1$. For each choice of the parameters $K$ and $\nu$, we have as a dotted line added the least squares fit of the form $C_1 N^{-C_2}$, for some $C_1, C_2 > 0$, and indicated the rate $N^{-C_2}$ in the legend. { The observed rates $C_2$ are also summarised in Tables \ref{tbl:obs_conv_G_1} and \ref{tbl:obs_conv_Phi_1}.} By Corollary \ref{cor:rate_mean}, we expect to see the faster convergence rates in the right panel of Table \ref{tbl:conv}. {For convenience, we have added these rates in parentheses in the legends in Figure \ref{fig:mean}}. For $\mu^{y,N,\mathcal G}_\mathrm{mean}$, we observe the rates in Table \ref{tbl:conv}, or slightly faster. For $\mu^{y,N,\Phi}_\mathrm{mean}$, we observe rates slightly faster than predicted for $\nu=1$, and slightly slower than predicted for $\nu=5$.  Finally, we remark that though the convergence rates of the error are slightly slower for $\mu^{y,N,\Phi}_\mathrm{mean}$, the actual errors are smaller for $\mu^{y,N,\Phi}_\mathrm{mean}$.

In Figure \ref{fig:mean_15}, we again show $2 \dhh(\mu^y, \mu^{y,N,\mathcal G}_\mathrm{mean})^2$ (left) and $2 \dhh(\mu^y, \mu^{y,N,\Phi}_\mathrm{mean})^2$ (right), for a variety of choices of $K$, with $J=15$ and $\nu=1$. { The observed convergence rates are summarised in Table \ref{tbl:obs_conv_15}.} We again observe convergence rates slightly faster than the rates predicted in the right panel of Table \ref{tbl:conv}. As in Figure \ref{fig:mean}, we observe that the errors in $\mu^{y,N,\Phi}_\mathrm{mean}$ are smaller, though the rates of convergence are slightly faster for $\mu^{y,N,\mathcal G}_\mathrm{mean}$.

\subsection{Marginal approximations}
In Figure \ref{fig:marg}, we show $2 \dhh(\mu^y, \mu^{y,N,\mathcal G}_\mathrm{marginal})^2$ (left) and $2 \dhh(\mu^y, \mu^{y,N,\Phi}_\mathrm{marginal})^2$ (right), for a variety of choices of $K$ and $\nu$, for $J=1$. For each choice of the parameters $K$ and $\nu$, we have again added the least squares fit of the form $C_1 N^{-C_2}$, and indicated the rate $C_2$ in the legend. { The observed rates $C_2$ are also summarised in Tables \ref{tbl:obs_conv_G_1} and \ref{tbl:obs_conv_Phi_1}.} By Corollary \ref{cor:rate_marginal}, we expect the error to be the sum of two contributions, one of which decays at the rate indicated in the left panel of Table \ref{tbl:conv}, and another which decays at the rate indicated by the right  panel of Table \ref{tbl:conv}. {For convenience, we have added these rates in parentheses in the legends in Figure \ref{fig:marg}}.For $\mu^{y,N,\mathcal G}_\mathrm{marginal}$, we observe the faster convergence rates in the right panel of Table \ref{tbl:conv}, although a closer inspection indicates that the convergence is slowing down as $N$ increases. For $\mu^{y,N,\mathcal G}_\mathrm{marginal}$, the observed rates are somewhere between the two rates predicted by Table \ref{tbl:conv}.

\subsection{Random approximations}
In Figure \ref{fig:rand}, we show $2 \EE_{\nu_N^\mathcal G} (\dhh(\mu^y, \mu^{y,N,\mathcal G}_\mathrm{sample})^2)$ (left) and $2 \EE_{\nu_N^\Phi} (\dhh(\mu^y, \mu^{y,N,\Phi}_\mathrm{sample})^2)$ (right), for a variety of choices of $K$ and $\nu$, for $J=1$. For each choice of the parameters $K$ and $\nu$, we have again added the least squares fit of the form $C_1 N^{-C_2}$, and indicated the rate $C_2$ in the legend. { The observed rates $C_2$ are also summarised in Tables \ref{tbl:obs_conv_G_1} and \ref{tbl:obs_conv_Phi_1}.} By Corollary \ref{cor:rate_sample}, we expect the error to be the sum of two contributions, as for the marginal approximations considered in the previous section, {and the corresponding rates from Table \ref{tbl:conv} have been added in parentheses in the legends}. For $\mu^{y,N,\mathcal G}_\mathrm{sample}$, we again observe the faster convergence rates in the right panel of Table \ref{tbl:conv}, but the convergence again seems to be slowing down as $N$ increases. For $\mu^{y,N,\mathcal G}_\mathrm{marginal}$, the observed rates are very close to the slower rates in the left panel of Table \ref{tbl:conv}.

\begin{figure}[p]
\centering
\hspace*{-0.75cm}\includegraphics[width=0.5\textwidth]{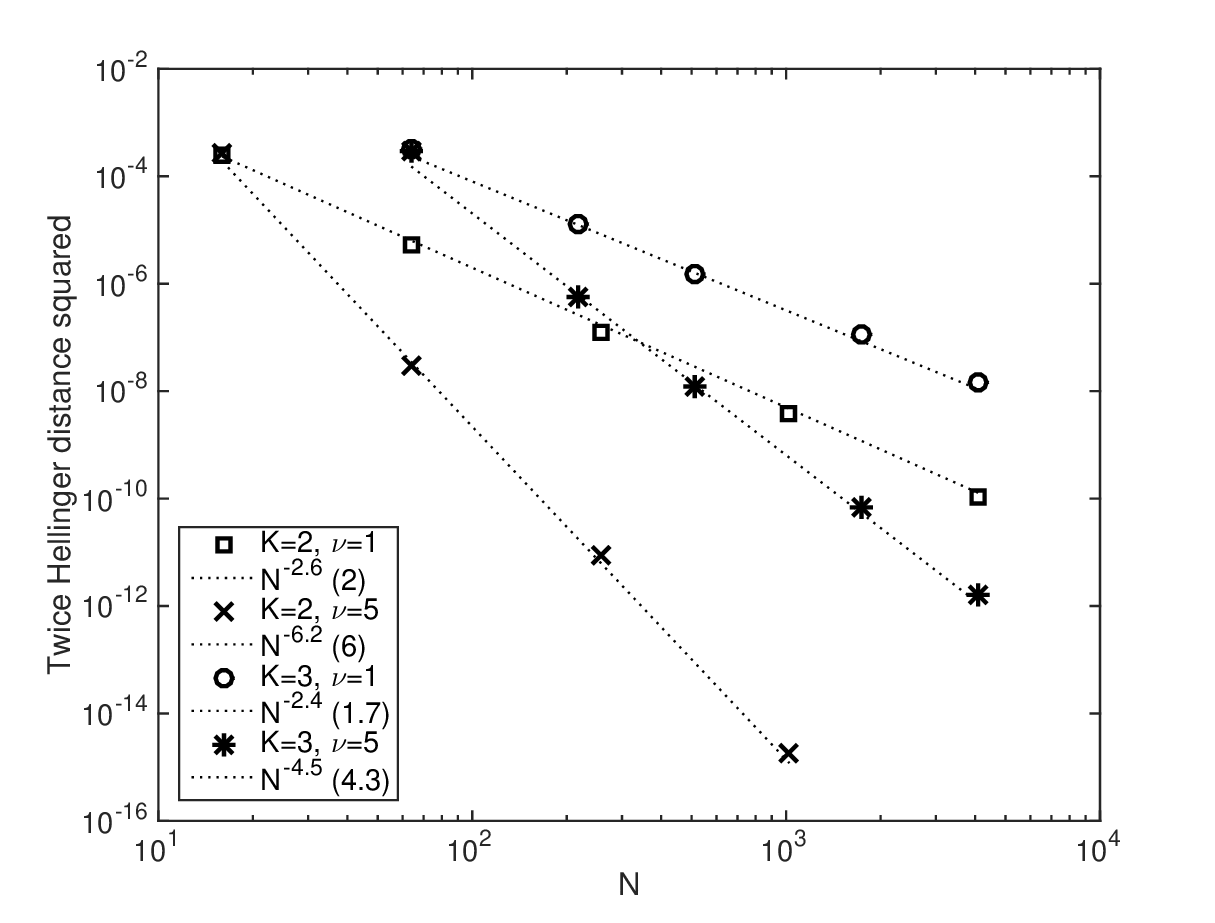}\ \includegraphics[width=0.5\textwidth]{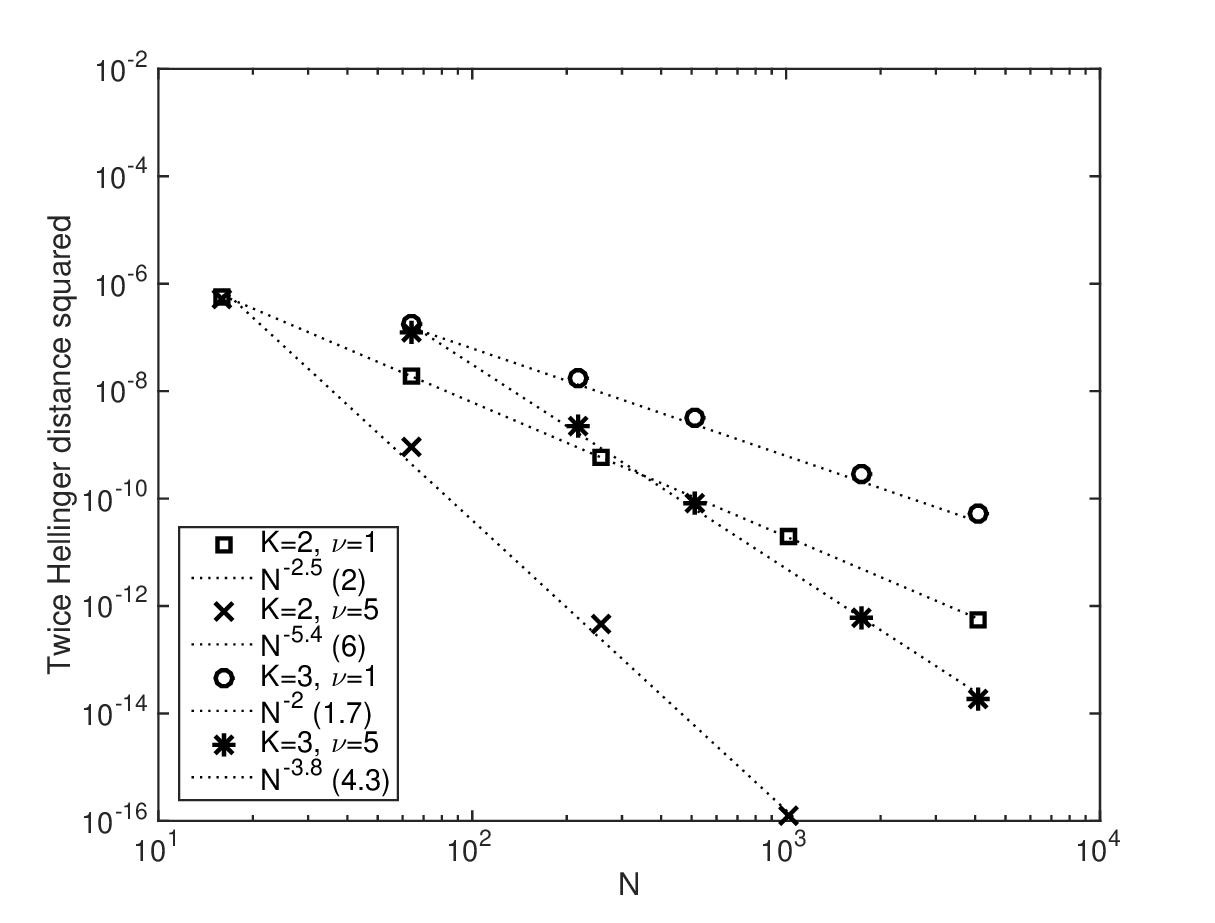}\hspace*{-0.75cm} 
\caption{$2 \dhh(\mu^y, \mu^{y,N,\mathcal G}_\mathrm{mean})^2$ (left) and $2 \dhh(\mu^y, \mu^{y,N,\Phi}_\mathrm{mean})^2$ (right), for a variety of choices of $K$ and $\nu$, for $J=1$.}
\label{fig:mean}
\end{figure} 

\begin{figure}[p]
\centering
\hspace*{-0.75cm}\includegraphics[width=0.5\textwidth]{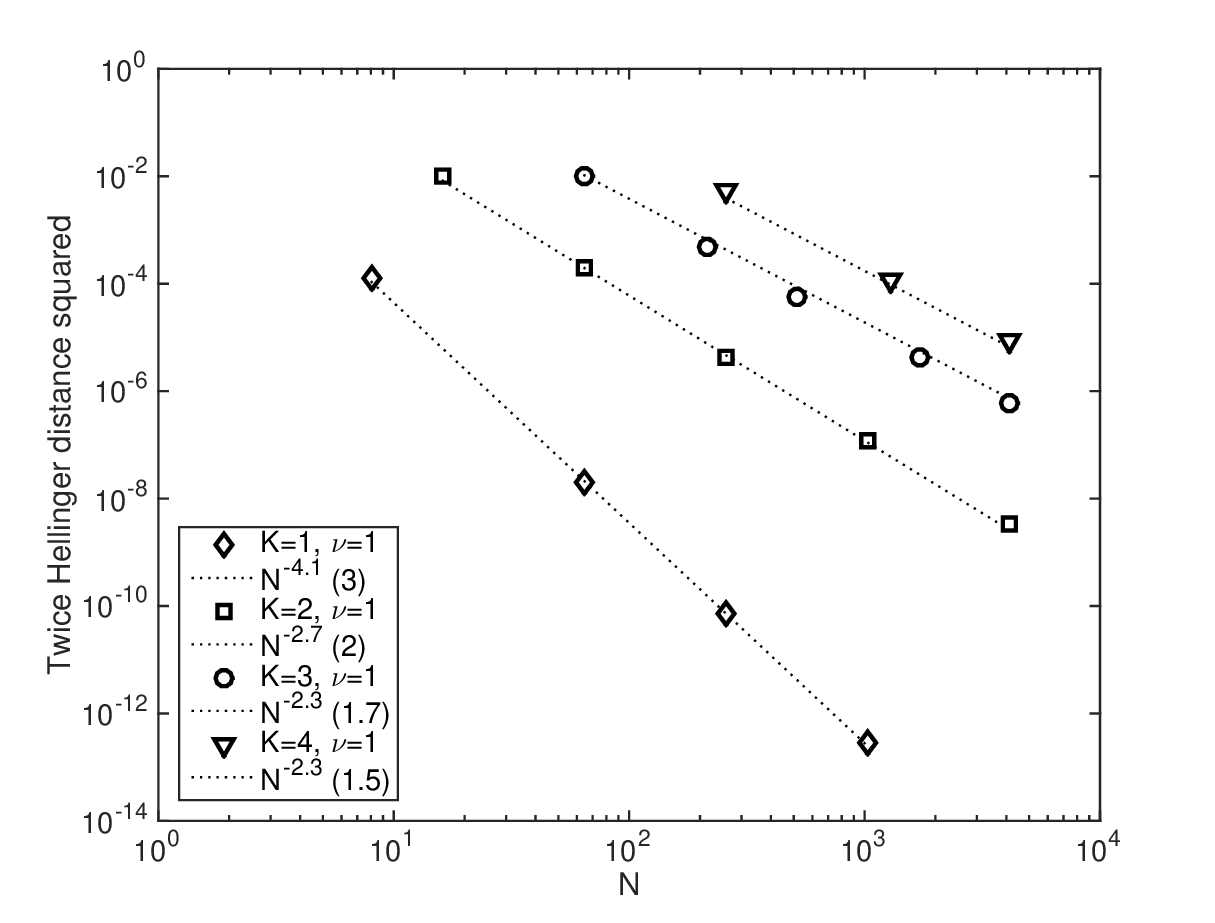}\ \includegraphics[width=0.5\textwidth]{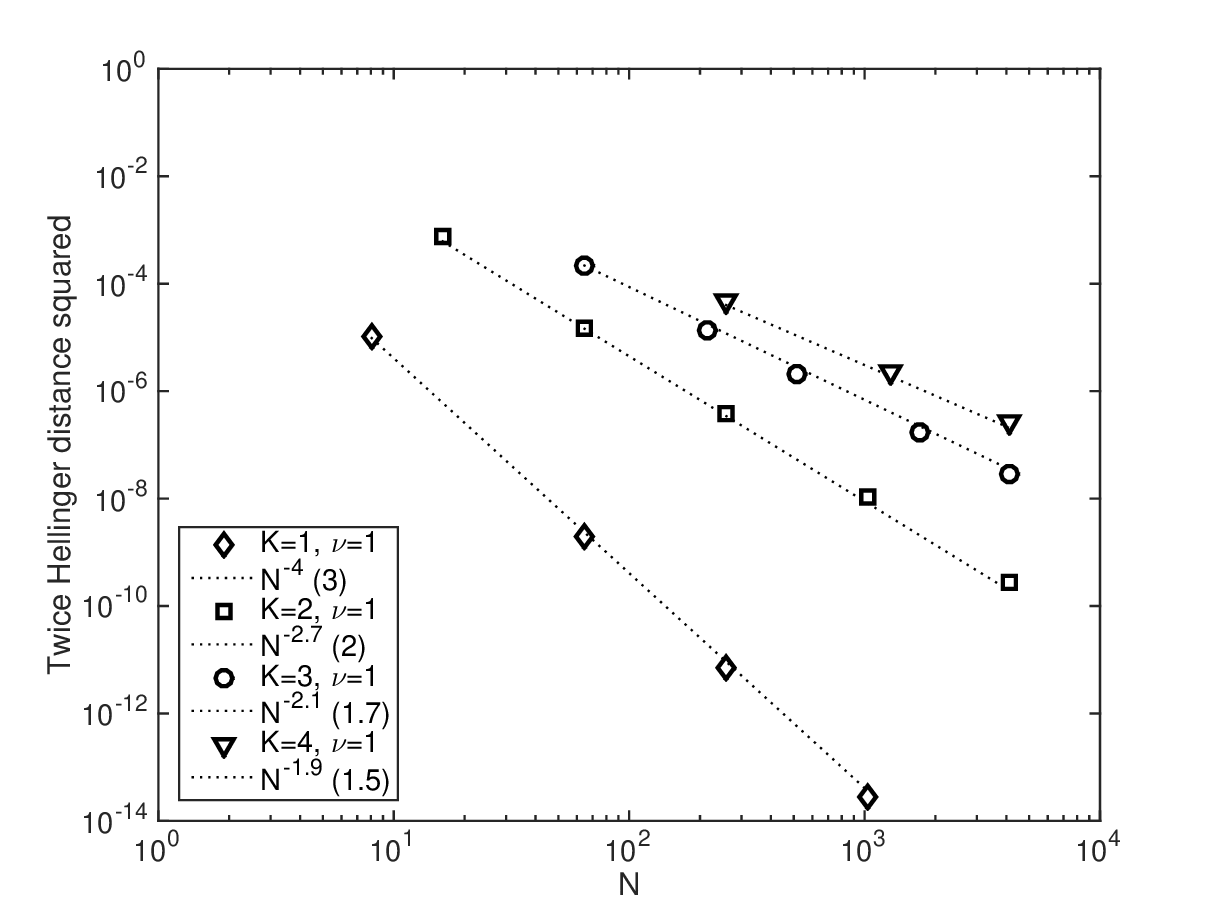}\hspace*{-0.75cm} 
\caption{$2 \dhh(\mu^y, \mu^{y,N,\mathcal G}_\mathrm{mean})^2$ (left) and $2 \dhh(\mu^y, \mu^{y,N,\Phi}_\mathrm{mean})^2$ (right), for a variety of choices of $K$ and $\nu=1$, for $J=15$.}
\label{fig:mean_15}
\end{figure} 

\begin{figure}[p]
\centering
\hspace*{-0.75cm}\includegraphics[width=0.5\textwidth]{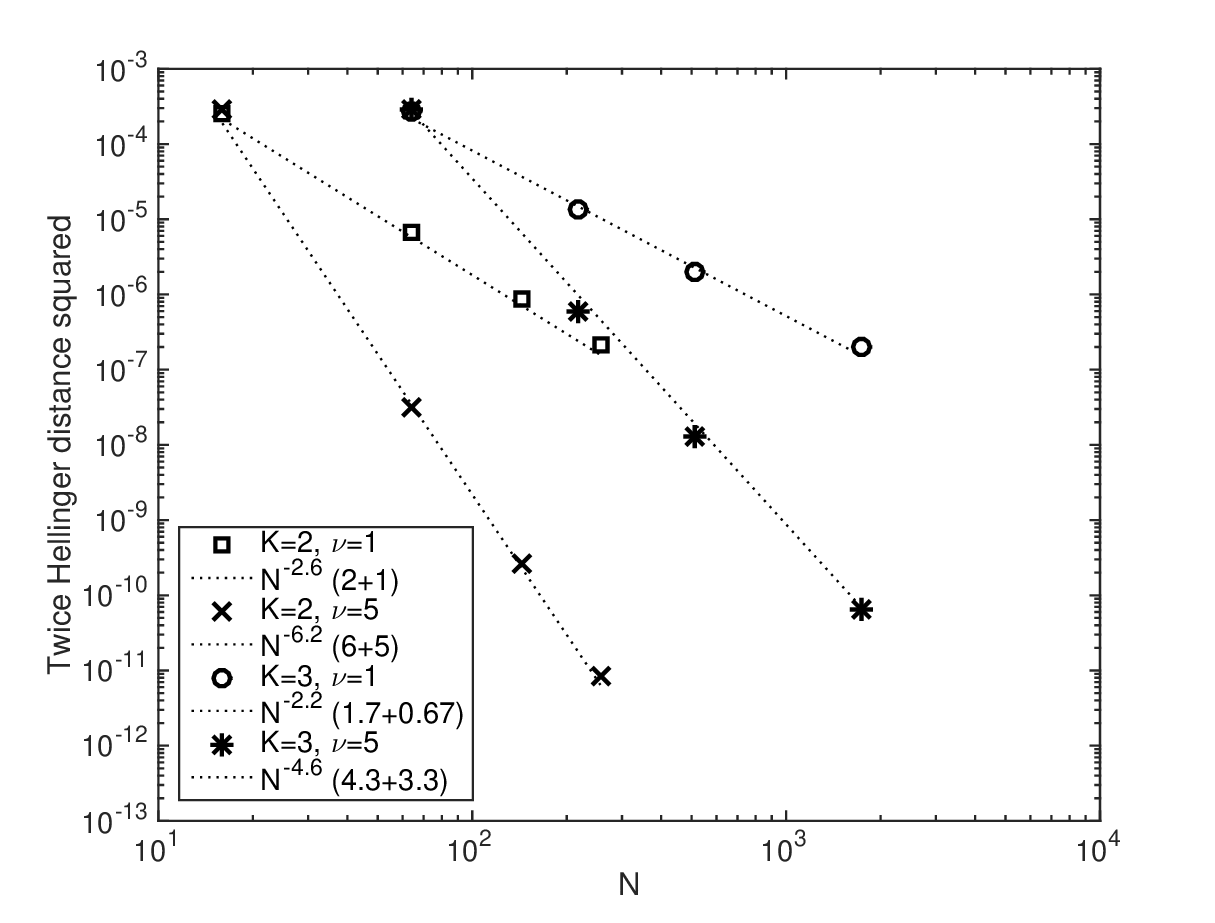}\ \includegraphics[width=0.5\textwidth]{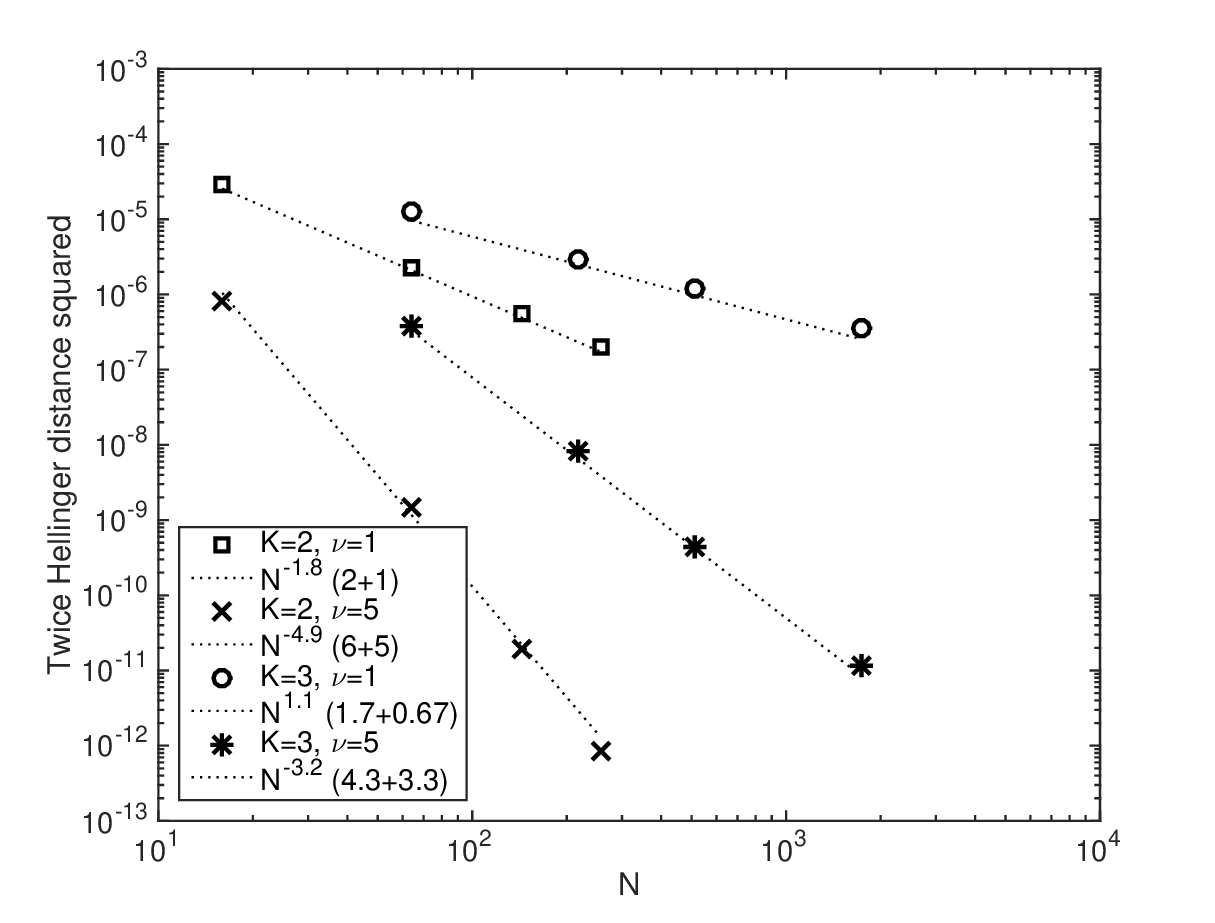}\hspace*{-0.75cm} 
\caption{$2 \dhh(\mu^y, \mu^{y,N,\mathcal G}_\mathrm{marginal})^2$ (left) and $2 \dhh(\mu^y, \mu^{y,N,\Phi}_\mathrm{marginal})^2$ (right), for a variety of choices of $K$ and $\nu$, for $J=1$.}
\label{fig:marg}
\end{figure} 

\begin{figure}[p]
\centering
\hspace*{-0.75cm}\includegraphics[width=0.5\textwidth]{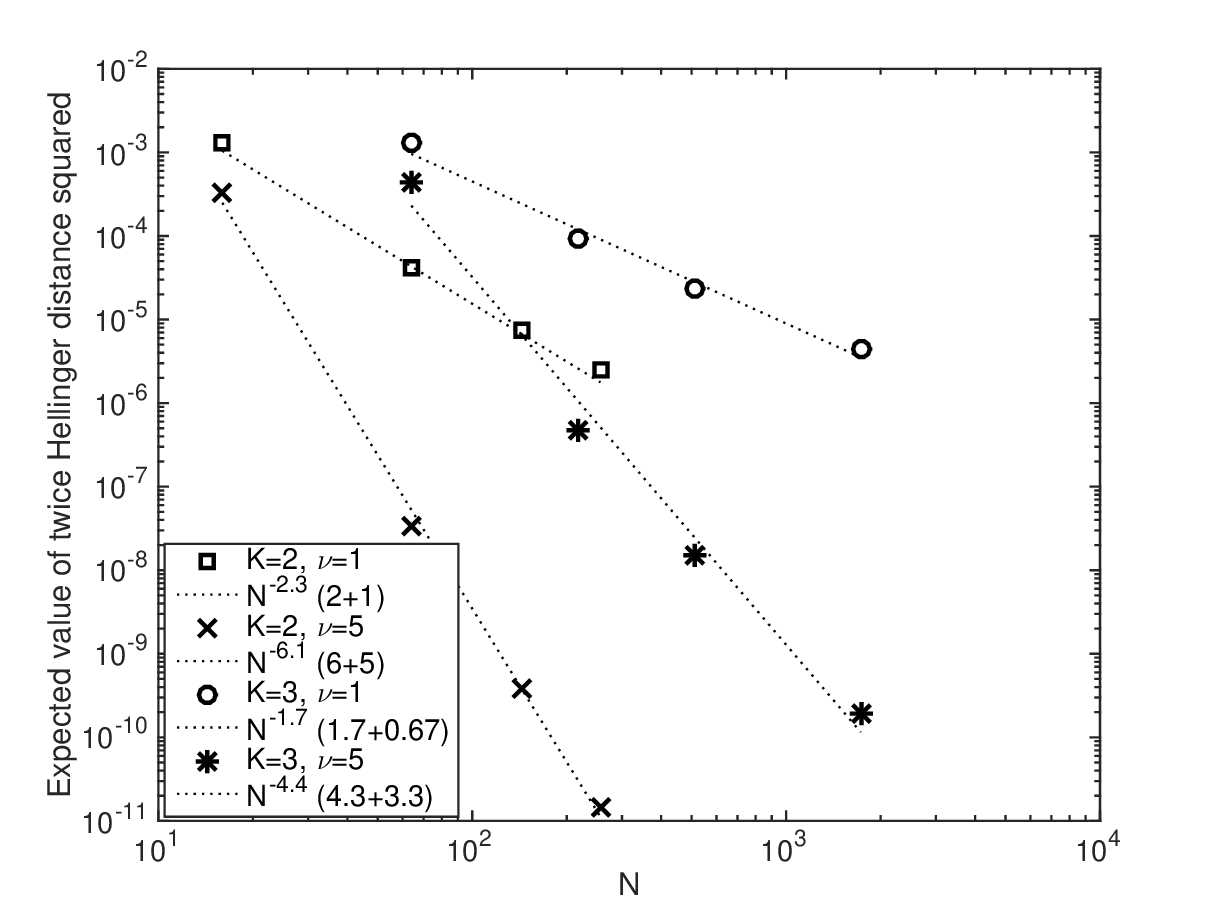}\ \includegraphics[width=0.5\textwidth]{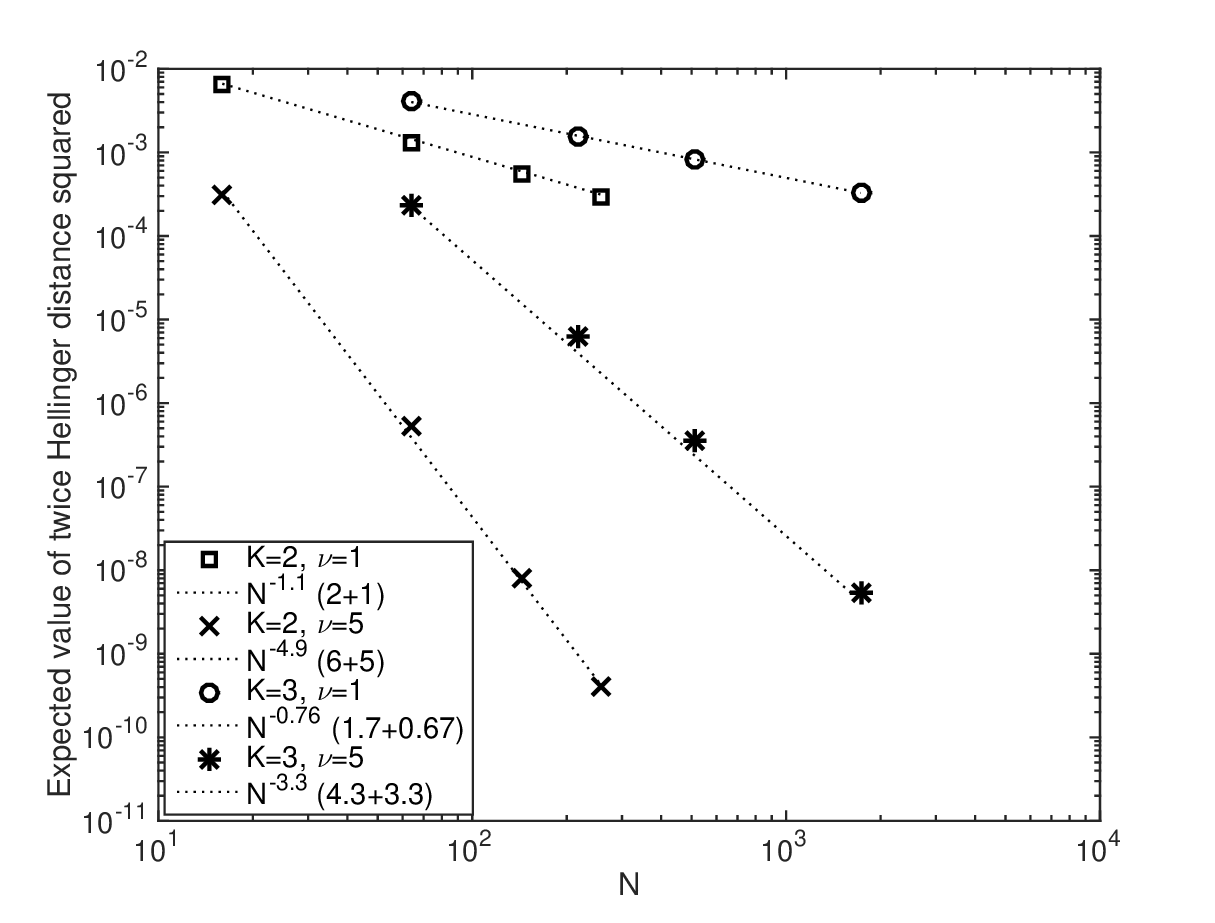}\hspace*{-0.75cm} 
\caption{$2 \EE_{\nu_N^\mathcal G} (\dhh(\mu^y, \mu^{y,N,\mathcal G}_\mathrm{sample})^2)$ (left) and $2 \EE_{\nu_N^\Phi} (\dhh(\mu^y, \mu^{y,N,\Phi}_\mathrm{sample})^2)$ (right), for a variety of choices of $K$ and $\nu$, for $J=1$.}
\label{fig:rand}
\end{figure}

\section{Conclusions and further work}\label{sec:conc}
Gaussian process emulators are frequently used as surrogate models. In this work, we analysed the error that is introduced in the Bayesian posterior distribution when a Gaussian process emulator is used to approximate the forward model, either in terms of the parameter-to-observation map or the negative log-likelihood. We showed that the error in the posterior distribution, measured in the Hellinger distance, can be bounded in terms of the error in the emulator, measured in a norm dependent on the approximation considered.

An issue that requires further consideration is the efficient emulation of vector-valued functions. A simple solution, employed in this work, is to emulate each entry independently. In many applications, {however, it is natural} to assume that the entries are correlated, and a better emulator could be constructed by including this correlation in the emulator. {Furthermore}, there are still a lot of open questions about how to do this optimally \cite{bzkl13}. {Also the question of scaling the Gaussian process methodology
to high dimensional input spaces remains open. The current error bounds from scattered data approximation employed in this paper feature a strong dependence on the input dimension $K$, yielding poor convergence estimates in high dimensions.}

{Another important issue is the selection of the design points used to construct the Gaussian process emulator, also known as experimental design. In applications where the posterior distribution concentrates with respect to the prior, it might be more efficient to choose design points that are somehow adapted to the posterior measure instead of space-filling designs that have a small fill distance. For example, we could use the sequential designs in \cite{sn16}. It would be interesting to prove suitable error bounds in this case, maybe using ideas from \cite{ws93}.}

{ In practical applications of Gaussian process emulators, such as in \cite{hkccr04}, the derivation of the emulator is often more involved than the simple approach presented in section \ref{sec:gp}. The hyper-parameters in the covariance kernel of the emulator are often unknown, and there is often a discrepancy between the mathematical model of the forward map and the true physical process, known as model error. These are both important issues for which the assumptions in our error bounds have not yet been verified. }

\bibliographystyle{siam}
\bibliography{bibgp}

\end{document}